\providecommand{\tabularnewline}{\\}
\numberwithin{equation}{section}
\numberwithin{figure}{section}
\numberwithin{table}{section}
\theoremstyle{plain}
\newtheorem{thm}{\protect\theoremname}[section]
  \theoremstyle{definition}
  \newtheorem{defn}[thm]{\protect\definitionname}
  \theoremstyle{remark}
  \newtheorem*{rem*}{\protect\remarkname}
  \theoremstyle{plain}
  \newtheorem*{thm*}{\protect\theoremname}
  \theoremstyle{remark}
  \newtheorem*{acknowledgement*}{\protect\acknowledgementname}
  \theoremstyle{remark}
  \newtheorem{rem}[thm]{\protect\remarkname}
  \theoremstyle{plain}
  \newtheorem{prop}[thm]{\protect\propositionname}
  \theoremstyle{plain}
  \newtheorem{lem}[thm]{\protect\lemmaname}
  \theoremstyle{plain}
  \newtheorem{cor}[thm]{\protect\corollaryname}
\newenvironment{lyxlist}[1]
{\begin{list}{}
{\settowidth{\labelwidth}{#1}
 \setlength{\leftmargin}{\labelwidth}
 \addtolength{\leftmargin}{\labelsep}
 }}
{\end{list}}
\DeclareMathOperator{\dist}{dist}
\DeclareMathOperator{\im}{im}
\DeclareMathOperator{\Spec}{Spec}
\DeclareMathOperator{\Prob}{Prob}
\DeclareMathOperator{\supp}{supp}
\newcommand{\one}{\mathbbm{1}}
\theoremstyle{remark}
\newtheorem*{rems*}{Remarks}
\theoremstyle{plain}
  \providecommand{\acknowledgementname}{Acknowledgement}
  \providecommand{\corollaryname}{Corollary}
  \providecommand{\definitionname}{Definition}
  \providecommand{\lemmaname}{Lemma}
  \providecommand{\propositionname}{Proposition}
  \providecommand{\remarkname}{Remark}
  \providecommand{\theoremname}{Theorem}
\providecommand{\theoremname}{Theorem}
\begin{document}

\title{Simplicial complexes:\\
spectrum, homology and random walks}

\author{Ori Parzanchevski%
\thanks{Supported by an Advanced ERC Grant, and the ISF.%
}\ \ and Ron Rosenthal%
\thanks{Supported by ERC StG 239990.%
}\\
\\
Hebrew University of Jerusalem}
\maketitle
\begin{abstract}
Random walks on a graph reflect many of its topological and spectral
properties, such as connectedness, bipartiteness and spectral gap
magnitude. In the first part of this paper we define a stochastic
process on simplicial complexes of arbitrary dimension, which reflects
in an analogue way the existence of higher dimensional homology, and
the magnitude of the high-dimensional spectral gap originating in
the works of Eckmann and Garland.

The second part of the paper is devoted to infinite complexes. We
present a generalization of Kesten's result on the spectrum of regular
trees, and of the connection between return probabilities and spectral
radius. We study the analogue of the Alon-Boppana theorem on spectral
gaps, and exhibit a counterexample for its high-dimensional counterpart.
We show, however, that under some assumptions the theorem does hold
- for example, if the codimension-one skeletons of the complexes in
question form a family of expanders.

Our study suggests natural generalizations of many concepts from graph
theory, such as amenability, recurrence/transience, and bipartiteness.
We present some observations regarding these ideas, and several open
questions.
\end{abstract}
\tableofcontents{}

\section{Introduction}

There are well known connections between dynamical, topological and
spectral properties of graphs: The random walk on a graph reflects
both its topological and algebraic connectivity, which are reflected
by the $0^{\mathrm{th}}$-homology and the spectral gap, respectively.

In this paper we present a stochastic process which takes place on
simplicial complexes of arbitrary dimension and generalizes these
connections. In particular, for a finite $d$-dimensional complex,
the asymptotic behavior of the process reflects the existence of a
nontrivial $\left(d-1\right)$-homology, and its rate of convergence
is dictated by the $\left(d-1\right)$-dimensional spectral gap%
\footnote{The high-dimensional spectral gap originates in the classic works
of Eckmann \cite{Eck44} and Garland \cite{Gar73}, and appears in
Definition \ref{The-spectral-gap} here.%
}. The study of the process on finite complexes occupies the first
half of the paper. In the second half we turn to infinite complexes,
studying the high-dimensional analogues of classic properties and
theorems regarding infinite graphs. Both in the finite and the infinite
cases, one encounters new phenomena along the familiar ones, which
reveal that graphs present only a degenerated case of a broader theory.

In order to give a flavor of the results without plunging into the
most general definitions, we present in §\ref{sub:Example-regular-triangle},
without proofs, the special case of regular triangle complexes. A
summary of the paper and its main results both for finite and infinite
complexes is presented in §\ref{sub:Summary-of-results}.

This manuscript is part of an ongoing research seeking to understand
the notion of \emph{high-dimensional expanders}. Namely, the analogue
of expander graphs in the realm of simplicial complexes of general
dimension. Here we discuss the dynamical aspect of expansion, i.e.\ asymptotic
behavior of random walks, and its relation to spectral expansion and
homology. In a previous paper we studied expansion from combinatorial
and isoperimetric points of view \cite{parzanchevski2012isoperimetric}.
The study of high-dimensional expanders is currently a very active
one, comprising the notions of geometric and topological expansion
in \cite{Gromov2010,fox2011,Matouvsek2011}, $\mathbb{F}_{2}$-coboundary
expansion in \cite{Linial2006,meshulam2009homological,dotterrer2010coboundary,Gundert2012,mukherjee2012cheeger},
and Ramanujan complexes in \cite{cartwright2003ramanujan,li2004ramanujan,Lubotzky2005a}.
We refer the reader to \cite{Lubotzky2013} for a survey of the current
state of the field.

\subsection{\label{sub:Example-regular-triangle}Example - regular triangle complexes}

First, let us observe the $\frac{1}{2}$-lazy random walk on a $k$-regular
graph $G=\left(V,E\right)$: the walker starts at a vertex $v_{0}$,
and at each step remains in place with probability $\frac{1}{2}$
or moves to each of its $k$ neighbors with probability $\frac{1}{2k}$.
Let $\mathbf{p}_{n}^{v_{0}}\left(v\right)$ denote the probability
of finding the walker at the vertex $v$ at time $n$. The following
observations are classic:
\begin{enumerate}
\item If $G$ is finite, then $\mathbf{p}_{\infty}^{v_{0}}=\lim_{n\rightarrow\infty}\mathbf{p}_{n}^{v_{0}}$
exists, and it is constant if and only if $G$ is connected.
\item Furthermore, the rate of convergence is given by
\[
\left\Vert \mathbf{p}_{n}^{v_{0}}-\mathbf{const}\right\Vert =O\left(\left(1-\frac{1}{2}\lambda\left(G\right)\right)^{n}\right),
\]
where $\lambda\left(G\right)$ is the spectral gap of $G$ (the definition
follows below).
\item When $G$ is infinite and connected, the spectral gap is related to
the return probability of the walk by
\begin{equation}
\lim_{n\rightarrow\infty}\sqrt[n]{\mathbf{p}_{n}^{v_{0}}\left(v_{0}\right)}=1-\frac{1}{2}\lambda\left(G\right).\label{eq:return_prob_inf_graph}
\end{equation}

\end{enumerate}
We recall the basic definitions: the \emph{Laplacian }of $G$, which
we denote by $\Delta^{+}$, is the operator which acts on $\mathbb{R}^{V}$
by 
\[
\left(\Delta^{+}f\right)\left(v\right)=f\left(v\right)-\frac{1}{k}\sum\limits _{w\sim v}f\left(w\right)
\]
(where $\sim$ denotes neighboring vertices). If $G$ is finite, then
its \emph{spectral gap} $\lambda\left(G\right)$ is defined as the
minimal Laplacian eigenvalue on a function whose sum on $V$ vanishes.
When $G$ is infinite, its spectral gap is defined to be $\lambda\left(G\right)=\min\Spec\left(\Delta^{+}\big|_{L^{2}\left(V\right)}\right)$
(for more on this see §\ref{sub:Infinite-graphs}).

\medskip{}

Moving one dimension higher, let $X=\left(V,E,T\right)$ be a \emph{$k$-regular
triangle complex}. This means that $E\subseteq{V \choose 2}$ (i.e.\ $E$
consists of subsets of $V$ of size $2$, the \emph{edges} of $X$),
$T\subseteq{V \choose 3}$ (the \emph{triangles}), every edge is contained
in exactly $k$ triangles in $T$, and for every triangle $\left\{ u,v,w\right\} $
in $T$, the edges forming its boundary, $\left\{ u,v\right\} $,
$\left\{ u,w\right\} $ and $\left\{ v,w\right\} $, are in $E$.

For $\left\{ v,w\right\} \in E$ we denote the directed edge $\xymatrix@1@C=20pt{
\overset{v}{\bullet}\ar@{-}[r]|(0.6)*=0@{>}         & \overset{w}{\bullet} }$ by $\left[v,w\right]$, and the set of all directed edges by $E_{\pm}$
(so that $\left|E_{\pm}\right|=2\left|E\right|$). For $e\in E_{\pm}$,
$\overline{e}$ denotes the edge with the same vertices and opposite
direction, i.e.\ $\vphantom{\overset{a}{a}}\overline{\left[v,w\right]}=\left[w,v\right]$. 

The following definition is the basis of the process which we shall
study:
\begin{defn}
Two directed edges $e,e'\in E_{\pm}$ are called \emph{neighbors}
(indicated by $e\sim e'$) if they have the same origin or the same
terminus, and the triangle they form is in the complex. Namely, if
$e=\left[v,w\right]$ and $e'=\left[v',w'\right]$, then $e\sim e'$
means that either $v=v'$ and $\left\{ v,w,w'\right\} \in T$, or
$w=w'$ and $\left\{ v,v',w'\right\} \in T$.
\end{defn}
We study the following lazy random walk on $E_{\pm}$: The walk starts
at some directed edge $e_{0}\in E_{\pm}$. At every step, the walker
stays put with probability $\frac{1}{2}$, or else move to a uniformly
chosen neighbor. Figure \ref{fig:One-step-of-triangle} illustrates
one step of the process, in two cases (the right one is non-regular,
but the walk is defined in the same manner). 

\begin{figure}[h]
\begin{centering}
\includegraphics[width=0.8\columnwidth]{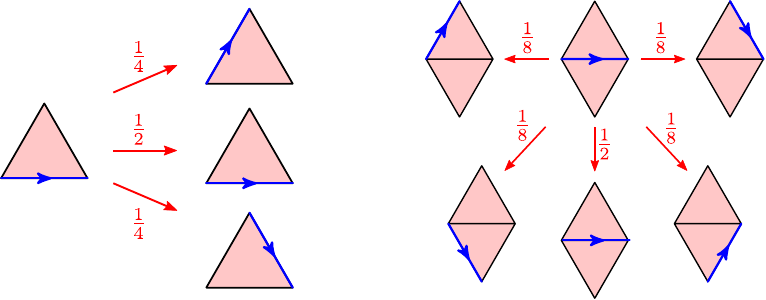}
\par\end{centering}

\caption{One step of the edge walk.\label{fig:One-step-of-triangle}}
\end{figure}

As in the random walk on a graph, this process induces a sequence
of distributions on $E_{\pm}$,
\[
\mathbf{p}_{n}\left(e\right)=\mathbf{p}_{n}^{e_{0}}\left(e\right),
\]
describing the probability of finding the walker at the directed edge
$e$ at time $n$ (having started from $e_{0}$). However, studying
the evolution of $\mathbf{p}_{n}$ amounts to studying the traditional
random walk on the graph with vertices $E_{\pm}$ and edges defined
by $\sim$. This will not take us very far, and in particular will
not reveal the presence or absence of first homology in $X$. Instead,
we study the evolution of what we call the ``expectation process''
on $X$:
\[
\mathcal{E}_{n}\left(e\right)=\mathcal{E}_{n}^{e_{0}}\left(e\right)=\mathbf{p}_{n}^{e_{0}}\left(e\right)-\mathbf{p}_{n}^{e_{0}}\left(\overline{e}\right),
\]
i.e.\ the probability of finding the walker at time $n$ at $e$,
minus the probability of finding it at the opposite edge $\overline{e}$
(for the reasons behind the name see Remark \ref{rem:process-name}). 

It is tempting to look at $\mathcal{E}_{\infty}^{e_{0}}=\lim_{n\rightarrow\infty}\mathcal{E}_{n}^{e_{0}}$
as is done in graphs, but a moment of reflection will show the reader
that $\mathcal{E}_{\infty}^{e_{0}}\equiv0$ for any finite triangle
complex, and any starting point $e_{0}$. Namely, the probabilities
of reaching $e$ and $\overline{e}$ become arbitrarily close, for
every $e$. While this might cause initial worry, it turns out that
the rate of decay of $\mathcal{E}_{n}$ is always the same: for any
finite triangle complex one has $\left\Vert \mathcal{E}_{n}^{e_{0}}\right\Vert =\Theta\left(\left(\frac{3}{4}\right)^{n}\right)$.
It is therefore reasonable to turn our attention to the \emph{normalized
expectation process,}
\[
\widetilde{\mathcal{E}}_{n}^{e_{0}}\left(e\right)=\left({\textstyle \frac{4}{3}}\right)^{n}\mathcal{E}_{n}^{e_{0}}\left(e\right)=\left({\textstyle \frac{4}{3}}\right)^{n}\left[\mathbf{p}_{n}^{e_{0}}\left(e\right)-\mathbf{p}_{n}^{e_{0}}\left(\overline{e}\right)\vphantom{\Big|}\right],
\]
and observe its limit, 
\[
\widetilde{\mathcal{E}}_{\infty}^{e_{0}}=\lim_{n\rightarrow\infty}\widetilde{\mathcal{E}}_{n}^{e_{0}}.
\]
For a finite triangle complex this limit always exists, and is nonzero.
This is the object which reveals the first homology of the complex. 

To see how, we need the following definition: We say that $f:E_{\pm}\rightarrow\mathbb{R}$
is \emph{exact} if its sum along every closed path vanishes; namely,
if 
\[
v_{0}\sim v_{1}\sim\ldots\sim v_{n}=v_{0}\qquad\Longrightarrow\qquad\sum_{i=0}^{n-1}f\left(\left[v_{i},v_{i+1}\right]\right)=0.
\]
This is the one-dimensional analogue of constant functions (for reasons
which will become clear in §\ref{sub:Simplicial-complexes-and}),
and the following holds:
\begin{enumerate}
\item For a finite $X$, $\mathcal{\widetilde{E}}_{\infty}^{e_{0}}$ is
exact for every $e_{0}\in E_{\pm}$ if and only if $G$ has a trivial
first homology.
\item Furthermore, the rate of convergence is given by
\[
\left\Vert \mathcal{\widetilde{E}}_{n}^{e_{0}}-\mathbf{exact}\right\Vert =O\left(\left(1-\frac{1}{3}\lambda\left(X\right)\right)^{n}\right),
\]
where $\lambda\left(X\right)$ is the spectral gap of $X$ (see Definition
\ref{The-spectral-gap}).
\item If $X$ is infinite and every vertex in $X$ is of infinite degree,
then its spectral gap (which is defined in §\ref{sub:General-dimension})
is revealed by the ``return expectation'': 
\[
\sup_{e_{0}\in E_{\pm}}\lim_{n\rightarrow\infty}\sqrt[n]{\widetilde{\mathcal{E}}_{n}^{e_{0}}\left(e_{0}\right)}=1-\frac{1}{3}\lambda\left(X\right).
\]

\end{enumerate}
What if one is interested not only in the existence of a first homology,
but also in its dimension? The answer is manifested in the walk as
well. In graphs the number of connected components is given by the
dimension of $\mathrm{Span}\left\{ \mathbf{p}_{\infty}^{v_{0}}\,\middle|\, v_{0}\in V\right\} $,
and an analogue statement holds here (see Theorem \ref{thm:walk-and-spec}).
\begin{rem*}
If the non-lazy\emph{ }walk on a finite graph is observed, then apart
from disconnectedness there is another obstruction for convergence
to the uniform distribution: \emph{bipartiteness}. We shall see that
this is a special case of an obstruction in general dimension, which
we call \emph{disorientability} (see Definition \ref{def:A-disorientation}).
In our example we have avoided this problem by considering the lazy
walk, both on graphs and on triangle complexes.
\end{rem*}

\subsection{\label{sub:Summary-of-results}Summary of results}

We give now a brief summary of the paper and its main results. The
definitions of the terms which appear in this section are explained
throughout the paper.

In §\ref{sub:The-walk} we define a $p$-lazy random walk on the oriented
$\left(d-1\right)$-cells of a $d$-dimensional complex $X$, and
associate with this walk the normalized expectation process $\widetilde{\mathcal{E}}_{n}^{\sigma_{0}}$.
In §\ref{sub:Walk-and-spectrum} it is shown that the limit of this
process $\widetilde{\mathcal{E}}_{\infty}^{\sigma_{0}}=\lim_{n}\widetilde{\mathcal{E}}_{n}^{\sigma_{0}}$
always exists and captures various properties of $X$, according to
the amount of laziness $p$ (this is an abridged version of Theorem
\ref{thm:walk-and-spec}):
\begin{thm*}
When $\frac{d-1}{3d-1}<p<1$, $\widetilde{\mathcal{E}}_{\infty}^{\sigma_{0}}$
is exact for every starting point $\sigma_{0}$ if and only if the
$\left(d-1\right)$-homology of $X$ is trivial. If furthermore $p\geq\frac{1}{2}$
then the rate of convergence is controlled by the spectral gap of
$X$:
\[
\dist\left(\widetilde{\mathcal{E}}_{n}^{\sigma_{0}},\widetilde{\mathcal{E}}_{\infty}^{\sigma_{0}}\right)=O\left(\left(1-\frac{1-p}{p\left(d-1\right)+1}\lambda\left(X\right)\right)^{n}\right).
\]

When $p=\frac{d-1}{3d-1}$, $\widetilde{\mathcal{E}}_{\infty}^{\sigma_{0}}$
is exact for every starting point $\sigma_{0}$ if and only if the
$\left(d-1\right)$-homology of $X$ is trivial, and in addition $X$
has no disorientable $\left(d-1\right)$-components (see Definitions
\ref{def:connectedness}, \ref{def:A-disorientation}).
\end{thm*}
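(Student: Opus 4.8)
The plan is to reduce the entire statement to the spectral analysis of a single self-adjoint operator on the space $C^{d-1}$ of antisymmetric functions on oriented $(d-1)$-cells. First I would show that the expectation process obeys a clean linear recursion. Writing the $p$-lazy transition rule for $\mathbf{p}_{n}$ and subtracting the value at the reversed cell, the ``stay-put'' term passes through unchanged, and the key observation is that the neighbours of $\overline{\sigma}$ are precisely the reverses of the neighbours of $\sigma$, so the incoming contributions of $\sigma'$ and $\overline{\sigma'}$ assemble into $\mathcal{E}_{n}(\sigma')$. This gives
\[
\mathcal{E}_{n+1}(\sigma)=p\,\mathcal{E}_{n}(\sigma)+\frac{1-p}{dk}\sum_{\sigma'\sim\sigma}\mathcal{E}_{n}(\sigma'),
\]
where $dk$ is the number of neighbours of each oriented $(d-1)$-cell (each of the $k$ incident $d$-cells contributes its $d$ sibling facets). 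Hence $\mathcal{E}_{n}=M^{n}\mathcal{E}_{0}$ for a fixed linear operator $M$, and $\widetilde{\mathcal{E}}_{n}=(\tilde M)^{n}\mathcal{E}_{0}$ with $\tilde M=\frac{d}{p(d-1)+1}M$.

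The heart of the matter is to identify $M$ with the upper Laplacian. I would prove the identity $\sum_{\sigma'\sim\sigma}f(\sigma')=k\,f(\sigma)-k\,(\Delta^{+}f)(\sigma)$ for antisymmetric $f$, where $\Delta^{+}$ is the normalized upper Laplacian; this converts the recursion into $M=\frac{p(d-1)+1}{d}I-\frac{1-p}{d}\Delta^{+}$, and after normalization
\[
\tilde M=I-\frac{1-p}{p(d-1)+1}\,\Delta^{+},
\]
which is self-adjoint. Its eigenvalues are $1-s\nu$ with $s=\frac{1-p}{p(d-1)+1}>0$ and $\nu\in\Spec(\Delta^{+})\subseteq[0,d+1]$. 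The top eigenvalue $1$ occurs exactly at $\nu=0$, whose eigenspace is $\ker\Delta^{+}$, the cocycles. By the Hodge decomposition the cocycles split as the exact functions (coboundaries) together with the harmonic space, and the harmonic space is isomorphic to $H_{d-1}(X;\mathbb{R})$. An elementary check shows $1-s(d+1)>-1$ is equivalent to $p>\frac{d-1}{3d-1}$, so in the open range $\Spec(\tilde M)\subseteq(-1,1]$ and $(\tilde M)^{n}$ converges to the orthogonal projection $P$ onto $\ker\Delta^{+}$. Since the initial data $\mathcal{E}_{0}^{\sigma_{0}}=\one_{\sigma_{0}}-\one_{\overline{\sigma_{0}}}$ span $C^{d-1}$, the limits $\widetilde{\mathcal{E}}_{\infty}^{\sigma_{0}}=P\mathcal{E}_{0}^{\sigma_{0}}$ span $\ker\Delta^{+}$; these lie in the exact subspace for every $\sigma_{0}$ iff the harmonic space is trivial, i.e. iff $H_{d-1}(X)=0$. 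For the rate, a second elementary inequality gives $1-s(d+1)\ge 0$ iff $p\ge\tfrac12$, so in that case $\Spec(\tilde M)\subseteq[0,1]$ and the restriction of $\tilde M$ to $(\ker\Delta^{+})^{\perp}$ has norm $1-s\lambda(X)$, yielding the claimed $O\!\left((1-\frac{1-p}{p(d-1)+1}\lambda(X))^{n}\right)$ bound.

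For the critical laziness $p=\frac{d-1}{3d-1}$ one computes $s(d+1)=2$, so the bottom eigenvalue of $\tilde M$ equals $-1$ precisely when $d+1\in\Spec(\Delta^{+})$. The plan here is to establish the high-dimensional analogue of ``bipartite $\iff$ the largest normalized Laplacian eigenvalue is $2$'': namely that $d+1$ is an eigenvalue of $\Delta^{+}$ exactly when $X$ has a disorientable $(d-1)$-component, with the $(-1)$-eigenspace of $\tilde M$ spanned by the corresponding disorientation cochains. Since such a cochain pairs nontrivially with some $\mathcal{E}_{0}^{\sigma_{0}}$, its presence is an obstruction to $\widetilde{\mathcal{E}}_{\infty}^{\sigma_{0}}$ being an exact limit; combining this with the harmonic obstruction from the previous paragraph shows that exactness for every $\sigma_{0}$ holds iff $H_{d-1}(X)=0$ and $X$ has no disorientable $(d-1)$-component.

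I expect the main obstacle to be the orientation and sign bookkeeping underlying the identity $\sum_{\sigma'\sim\sigma}f=k\,f-k\,\Delta^{+}f$: one must check that summing the unsigned neighbour contributions of an antisymmetric function reproduces exactly the signed incidence pattern of $d_{d-1}^{*}d_{d-1}$, with the correct diagonal $k$ and the correct off-diagonal signs coming from the two ways ($\sigma,\sigma'$ coherently or incoherently oriented in their common $d$-cell). The companion obstacle is the spectral characterization of disorientability, $d+1\in\Spec(\Delta^{+})$, which I would prove by exhibiting the disorientation as the eigenfunction realizing the extreme Rayleigh quotient and, conversely, reconstructing a disorientation from any top eigenvector. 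Once these two combinatorial facts are in place, the remaining arguments are the routine spectral-radius and projection estimates sketched above.
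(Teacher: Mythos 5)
Your proposal follows essentially the same route as the paper: identify the transition operator with $\frac{p(d-1)+1}{d}I-\frac{1-p}{d}\Delta^{+}$, use the Hodge decomposition $\Omega^{d-1}=B_{d-1}\oplus\mathcal{H}^{d-1}\oplus B^{d-1}$ to see that $\bigl(\tilde M\bigr)^{n}$ converges to the projection onto $\ker\Delta^{+}=Z^{d-1}$, exploit the fact that the Dirac forms $\one_{\sigma}$ span $\Omega^{d-1}$ to translate ``exact for every $\sigma_{0}$'' into $\mathcal{H}^{d-1}=0$, and characterize the critical laziness via the eigenvalue $d+1$ of $\Delta^{+}$ and disorientability. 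The two elementary inequalities ($s(d+1)<2\iff p>\frac{d-1}{3d-1}$ and $s(d+1)\le 1\iff p\ge\frac12$) and the spectral characterization of disorientability are exactly the paper's Proposition on $\Spec\Delta^{+}$.

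There is, however, one genuine gap: you have silently assumed the complex is $k$-regular. The theorem is stated for an arbitrary finite uniform $d$-complex, and in the non-regular case your recursion is wrong as written --- the mass arriving at $\sigma$ from a neighbor $\sigma'$ is $\frac{1-p}{d\deg(\sigma')}\mathbf{p}_{n}(\sigma')$, so the correct operator is $(Af)(\sigma)=pf(\sigma)+\frac{1-p}{d}\sum_{\sigma'\sim\sigma}\frac{f(\sigma')}{\deg(\sigma')}$, which is \emph{not} symmetric in the standard basis. Your whole argument rests on self-adjointness of $\tilde M$ (to get the spectral decomposition and the projection limit), so you must introduce the degree-weighted inner product $\langle f,g\rangle=\sum_{\sigma\in X^{d-1}}\frac{f(\sigma)g(\sigma)}{\deg\sigma}$, with respect to which $\Delta^{+}=\partial_{d}\partial_{d}^{*}$ and hence $A$ are self-adjoint; this is precisely how the paper sets things up. A second, minor, imprecision: the norm of $\tilde M$ restricted to $(\ker\Delta^{+})^{\perp}=B_{d-1}$ is $1-s\,\widetilde{\lambda}(X)$ where $\widetilde{\lambda}$ is the \emph{essential} gap $\min\Spec\Delta^{+}|_{B_{d-1}}$, not $1-s\,\lambda(X)$; since $B_{d-1}\subseteq Z_{d-1}$ gives $\widetilde{\lambda}\ge\lambda$, the claimed $O\bigl((1-s\lambda(X))^{n}\bigr)$ bound still follows, but the equality you assert does not hold when the homology is trivial.
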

Next, we move on to discuss infinite complexes, showing that they
present new aspects which do not appear in infinite graphs. In §\ref{sub:Example---arboreal}
we define a family of simplicial complexes (which we call \emph{arboreal
complexes}) generalizing the notion of trees. In Theorem \ref{thm:The-spectrum-of-trees}
we compute their spectra, generalizing Kesten's classic result on
the spectrum of regular trees \cite{MR0109367}. The spectra of the
regular arboreal complexes of high dimension and low regularity exhibit
a surprising new phenomenon - an isolated eigenvalue. 

Sections \ref{sub:Continuity-of-the} and \ref{sub:Alon-Boppana-type-theorems}
are devoted to study the behavior of the spectrum with respect to
a limit in the space of complexes. In particular we are interested
in the high-dimensional analogue of the Alon-Bopanna theorem, which
states that if a sequence of graphs $G_{n}$ convergences to a graph
$G$, then $\liminf_{n\rightarrow\infty}\lambda\left(G_{n}\right)\leq\lambda\left(G\right)$.
We first show that in general this need not hold in higher dimension
(Theorem \ref{thm:T_2_2-counterexample}). This uses the isolated
eigenvalue of the $2$-regular arboreal complex of dimension two,
which is shown in Figure \ref{fig:T_2_2}, as well as a study of the
spectrum of balls in this complex (shown in Figure \ref{fig:The-orientation-T_2_2}). 

Even though the Alon-Bopanna theorem does not hold in general in high
dimension, we show that under a variety of conditions it does hold
(Theorem \ref{thm:high-alon-boppana}):
\begin{thm*}
If \textup{$X_{n}\overset{{\scriptscriptstyle n\rightarrow\infty}}{\longrightarrow}X$},
and one of the following holds:
\begin{enumerate}
\item The spectral gap of $X$ is nonzero,
\item zero is a non-isolated point in the spectrum of $X$, or
\item the $\left(d-1\right)$-skeletons of the complexes $X_{n}$ form a
family of $\left(d-1\right)$-expanders,
\end{enumerate}

then 
\[
\liminf_{n\rightarrow\infty}\lambda\left(X_{n}\right)\leq\lambda\left(X\right).
\]

\end{thm*}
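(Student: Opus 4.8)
The plan is to exhibit, for every $\epsilon>0$ and infinitely many $n$, a finitely supported cochain on $X_{n}$ that is orthogonal to $\ker\Delta^{+}_{X_{n}}$ and whose $\Delta^{+}$-Rayleigh quotient is at most $\lambda(X)+\epsilon$; such a cochain witnesses $\lambda(X_{n})\le\lambda(X)+\epsilon$, and letting $\epsilon\to0$ gives the claim. Two reductions come first. If infinitely many of the $X_{n}$ have nontrivial $\left(d-1\right)$-homology then each of them carries a harmonic cochain, so $\lambda(X_{n})=0\le\lambda(X)$ along that subsequence and we are done; hence I assume all but finitely many $X_{n}$ have trivial $\left(d-1\right)$-homology, which deletes the harmonic summand from their Hodge decomposition and leaves only exact cochains and boundaries. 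Second, recall (Definition \ref{The-spectral-gap}) that $\lambda(X)=\inf\Spec(\Delta^{+}|_{\mathcal{Z}})$, where $\mathcal{Z}$ is the space of $L^{2}$ coclosed cochains (those orthogonal to every exact cochain); on $\mathcal{Z}$ the operator $\Delta^{+}$ vanishes precisely on the $L^{2}$-harmonic cochains and is positive on the closure $\overline{B}$ of the finitely supported boundaries $\partial c$.

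The engine is transplantation along $X_{n}\to X$: a cochain supported in the ball of radius $R$ about a cell of $X$ can be copied verbatim onto $X_{n}$ as soon as the radius-$\left(R+1\right)$ ball about the matching cell of $X_{n}$ is isomorphic to its counterpart in $X$, which holds for all large $n$. As $\Delta^{+}$, the boundary map $\partial$, the lower Laplacian $\Delta^{-}$, and the norm are all local, the copied cochain has the same norm, the same value of $\langle\Delta^{+}\cdot,\cdot\rangle$, and the same coclosedness defect $\langle\Delta^{-}\cdot,\cdot\rangle$ as the original. The whole problem thus reduces to manufacturing on $X$ a finitely supported near-minimizer of the Rayleigh quotient that survives as an admissible test cochain --- coclosed and orthogonal to the kernel --- after transplantation.

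Under hypothesis (1) or (2) this is immediate and loses nothing. If $\lambda(X)\ne0$ there are no $L^{2}$-harmonic cochains, so $\mathcal{Z}=\overline{B}$ and the infimum defining $\lambda(X)$ is already taken over boundaries; if $0$ is a non-isolated point of $\Spec(X)$ then, since harmonic cochains can contribute only the single isolated value $0$, the spectrum of $\Delta^{+}|_{\overline{B}}$ must itself accumulate at $0$, so again $\inf\Spec(\Delta^{+}|_{\overline{B}})=\lambda(X)$. In either case the bottom of the spectrum is approached within boundaries, so --- using that $\Delta^{+}$ is bounded, hence the Rayleigh quotient is norm-continuous and finitely supported boundaries are dense in $\overline{B}$ --- I pick a finitely supported boundary $\beta=\partial c$ with Rayleigh quotient below $\lambda(X)+\epsilon$. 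Transplanting $c$ produces $\partial\tilde{c}$ on $X_{n}$, which is again a boundary and hence automatically coclosed and orthogonal to $\ker\Delta^{+}_{X_{n}}$, with identical Rayleigh quotient; this gives $\lambda(X_{n})\le\lambda(X)+\epsilon$ with no projection error.

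Hypothesis (3) is the genuinely delicate case, and it is exactly the configuration behind the counterexample of Theorem \ref{thm:T_2_2-counterexample}: $\lambda(X)=0$ is attained only by an $L^{2}$-harmonic cochain while $\Delta^{+}|_{\overline{B}}$ has a real gap, so boundaries cannot reach the bottom and one is forced to transplant a truncated harmonic form. I take an $L^{2}$-harmonic $\psi$, cut it off to $\psi_{R}=\chi_{R}\psi$, and transplant to $\phi$ on $X_{n}$. Since $\Delta^{+}\psi=\Delta^{-}\psi=0$, the standard cutoff estimate bounds both $\langle\Delta^{+}\psi_{R},\psi_{R}\rangle$ and $\langle\Delta^{-}\psi_{R},\psi_{R}\rangle$ by the $L^{2}$-mass of $\psi$ on the annulus where $\chi_{R}$ varies, which tends to $0$ as $R\to\infty$; thus $\phi$ has Rayleigh quotient $\to0$ and coclosedness defect $\to0$. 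Writing $\phi=e+\beta$ with $e$ exact and $\beta$ a boundary (no harmonic part, by the reduction), the admissible piece $\beta$ has Rayleigh quotient $\langle\Delta^{+}\phi,\phi\rangle/(\|\phi\|^{2}-\|e\|^{2})$, so it suffices that $\|e\|\ll\|\phi\|$. This is precisely where $\left(d-1\right)$-expansion enters: it furnishes a uniform lower bound $\gamma_{0}>0$ on the spectrum of $\Delta^{-}$ restricted to exact cochains, and since $\Delta^{-}$ annihilates $\beta$ we get $\gamma_{0}\|e\|^{2}\le\langle\Delta^{-}e,e\rangle=\langle\Delta^{-}\phi,\phi\rangle\to0$. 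The crux of the whole argument is this last estimate --- converting the truncation-induced coclosedness defect into smallness of the exact component, which is impossible without the uniform lower-Laplacian gap, and whose failure for non-expanding skeletons is exactly what produces the counterexample.
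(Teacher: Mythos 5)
Your proof is correct, and for the decisive case (3) it is essentially the paper's own argument: truncate the $L^{2}$-eigenform at the bottom of the spectrum, transplant it to $X_{n}$, split off the component along the coboundaries, and use the codimension-one gap --- via $\Spec\left(\partial_{d-1}^{*}\partial_{d-1}\big|_{B^{d-1}}\right)=\Spec\left(\partial_{d-1}\partial_{d-1}^{*}\big|_{B_{d-2}}\right)\geq\lambda_{d-2}\left(X_{n}\right)$ --- to show that component is negligible. The paper phrases the smallness with $\left\Vert \Delta^{\pm}f_{n}\right\Vert \rightarrow0$ and you with the quadratic forms $\left\langle \Delta^{\pm}\phi,\phi\right\rangle \rightarrow0$; that difference is immaterial. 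Where you genuinely diverge is in cases (1) and (2): the paper derives these from Theorem \ref{thm:cont-spec}, i.e.\ from weak convergence of spectral measures (Lemma \ref{Lemma-convergence-of-spectral-measures}, whose moments are return expectations and hence local), producing $\lambda_{n}\in\Spec\Delta_{X_{n}}^{+}$ with $\lambda_{n}\rightarrow\lambda\left(X\right)$ and then noting that nonzero spectral points automatically lie in $\Spec\Delta^{+}\big|_{Z_{d-1}}$. You instead transplant finitely supported near-minimizers directly, using that $Z_{d-1}=\overline{B_{d-1}}\oplus\left(Z_{d-1}\cap\ker\Delta^{+}\right)$ forces the bottom of the spectrum to be approached inside $\overline{B_{d-1}}$ under either hypothesis, that finitely supported boundaries are dense there, and that a transplanted boundary is automatically admissible on $X_{n}$. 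This buys a self-contained argument that never mentions spectral measures, at the cost of verifying the density and decomposition facts; the paper's route yields the stronger continuity statement $\Spec A_{X}\subseteq\overline{\bigcup_{n}\Spec A_{X_{n}}}$ as a by-product. Two minor wrinkles, neither fatal: your opening reduction to $X_{n}$ with trivial homology is unnecessary (the orthogonal projection of $\phi$ onto $Z_{d-1}\left(X_{n}\right)$ serves as the test form whether or not it is literally a boundary, and for infinite $X_{n}$ the inference from nontrivial homology to an $L^{2}$-harmonic form would itself need justification), and the claim that $\left(d-1\right)$-expansion bounds $\Delta^{-}$ from below on $B^{d-1}$ deserves the one-line $TT^{*}$ versus $T^{*}T$ computation that the paper writes out at the step marked $\star$.
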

In §\ref{sub:Spectral-radius-and} we show that the connection between
the spectrum of a graph, and the return probability of the random
walk on it (see e.g.\ \cite[Lemma 2.2]{MR0109367}), generalizes
to higher dimensions (Proposition \ref{prop:spectral_radius}).

The final section on infinite complexes addresses the high-dimensional
analogues of the concepts of amenability, recurrence and transience,
proving some properties of these (Proposition \ref{pro:amen-trans}),
and raising many open questions.
\begin{acknowledgement*}
We would like to thank Alex Lubotzky and Gil Kalai who prompted this
research, and Jonathan Breuer for many helpful discussions. We are
also grateful to Noam Berger, Emmanuel Farjoun, Nati Linial, Doron
Puder and Andrzej \.{Z}uk for their insightful comments. 
\end{acknowledgement*}

\section{Finite complexes}

Throughout this section $X$ is a finite $d$-dimensional simplicial
complex on a finite vertex set $V$. This means that $X$ is comprised
of subsets of $V$, called \emph{cells}, and the subset of every cell
is also a cell. A cell of size $j+1$ (where $-1\leq j$) is said
to be of \emph{dimension} $j$, and $X^{j}$ denotes the set of $j$-cells
- cells of dimension $j$. The dimension of $X$ is the maximal dimension
of a cell in it. The \emph{degree} of a $j$-cell $\sigma$, denoted
$\deg\left(\sigma\right)$, is the number of $\left(j+1\right)$-cells
containing it. We shall assume that $X$ is \emph{uniform}, meaning
that every cell is contained in some cell of dimension $d=\dim X$.

For $j\geq1$, every $j$-cell $\sigma=\left\{ \sigma_{0},\ldots,\sigma_{j}\right\} $
has two possible orientations, corresponding to the possible orderings
of its vertices, up to an even permutation ($1$-cells and the empty
cell have only one orientation). We denote an oriented cell by square
brackets, and a flip of orientation by an overbar. For example, one
orientation of $\sigma=\left\{ x,y,z\right\} $ is $\left[x,y,z\right]$,
which is the same as $\left[y,z,x\right]$ and $\left[z,x,y\right]$.
The other orientation of $\sigma$ is $\overline{\left[x,y,z\right]}=\left[y,x,z\right]=\left[x,z,y\right]=\left[z,y,x\right]$.
We denote by $X_{\pm}^{j}$ the set of oriented $j$-cells (so that
$\left|X_{\pm}^{j}\right|=2\left|X^{j}\right|$ for $j\geq1$ and
$X_{\pm}^{j}=X^{j}$ for $j=-1,0$), and we shall occasionally denote
by $X_{+}^{j}$ a choice of orientation for $X^{j}$, i.e.\ a subset
of $X_{\pm}^{j}$ such that $X_{\pm}^{j}$ is the disjoint union of
$X_{+}^{j}$ and $\left\{ \overline{\sigma}\,\middle|\,\sigma\in X_{+}^{j}\right\} $.

The \emph{faces} of a $j$-cell $\sigma=\left\{ v_{0},\ldots,v_{j}\right\} $
are the $\left(j-1\right)$-cells $\sigma\backslash v_{0},\sigma\backslash v_{1},\ldots,\sigma\backslash v_{j}$.
An oriented $j$-cell $\sigma=\left[v_{0},\ldots,v_{j}\right]$ ($2\leq j\leq d$)
induces an orientation on its faces as follows: the face $\left\{ v_{0},\ldots,v_{i-1},v_{i+1},\ldots,v_{j}\right\} $
is oriented as $\left(-1\right)^{i}\left[v_{1},\ldots,v_{i-1},v_{i+1},\ldots,v_{k}\right]$,
where $\left(-1\right)^{i}$ means taking the opposite orientation
when $\left(-1\right)^{i}$ is $-1$. 

Finally, we define the space of \emph{$j$-forms }on $X$: these are
functions on $X_{\pm}^{j}$ which are antisymmetric w.r.t.\ a flip
of orientation:
\[
\Omega^{j}=\Omega^{j}\left(X\right)=\left\{ f:X_{\pm}^{j}\rightarrow\mathbb{R}\,\middle|\, f\left(\overline{\sigma}\right)=-f\left(\sigma\right)\quad\forall\sigma\in X_{\pm}^{d-1}\right\} .
\]
For $j=-1,0$ there are no flips; $\Omega^{0}$ is just the space
of functions on the vertices, and $\Omega^{-1}=\left\{ f:\left\{ \varnothing\right\} \rightarrow\mathbb{R}\right\} $
can be identified in a natural way with $\mathbb{R}$. With every
oriented $j$-cell $\sigma\in X^{j}$ we associate the Dirac $j$-form
$\mathbbm{1}_{\sigma}$ defined by 
\[
\mathbbm{1}_{\sigma}\left(\sigma'\right)=\begin{cases}
1 & \sigma'=\sigma\\
-1 & \sigma'=\overline{\sigma}\\
0 & \mathrm{otherwise}
\end{cases}
\]
(for $j=0$ this is the standard Dirac function, and $\mathbbm{1}_{\varnothing}$
is the constant $1$).

\subsection{\label{sub:The-walk}The $\left(d-1\right)$-walk and expectation
process}

Let $X$ be a uniform $d$-dimensional complex and $0\leq p<1$. The
following process is the generalization of the edge walk from §\ref{sub:Example-regular-triangle}:
\begin{defn}
\label{def:The--lazy--walk}The $p$-lazy $\left(d-1\right)$-walk
on a $d$-complex $X$ is defined as follows: 
\begin{itemize}
\item Two oriented $\left(d-1\right)$-cells $\sigma,\sigma'\in X_{\pm}^{d-1}$
are said to be \emph{neighbors} (denoted $\sigma\sim\sigma'$) if
there exists an oriented $d$-cell $\tau$, such that both $\sigma$
and $\overline{\sigma'}$ are faces of $\tau$ with the orientations
induced by it (see Figure \ref{fig:One-step-of-triangle}).
\item The walk starts at an initial oriented $\left(d-1\right)$-cell $\sigma_{0}$,
and at each step the walker stays in place with probability $p$,
and with probability $\left(1-p\right)$ chooses uniformly one of
its neighbors and moves to it.
\end{itemize}
\end{defn}
Put differently, it is the Markov chain on $X_{\pm}^{d-1}$ with transition
probabilities 
\[
\Prob\left(X_{n+1}=\sigma'|X_{n}=\sigma\right)=\begin{cases}
p & \quad\sigma'=\sigma\\
\frac{1-p}{d\deg(\sigma)} & \quad\sigma'\sim\sigma\\
0 & \quad\mbox{otherwise}
\end{cases},
\]
(note that $\sigma$ is contained in $\deg\left(\sigma\right)$ $d$-cells,
and thus has $d\cdot\deg\sigma$ neighbors!)

We remark that neighboring cells can also be described in the following
way: if $\sigma,\sigma'\in X_{\pm}^{j}$ and $j\geq2$, then $\sigma\sim\sigma'$
iff the unoriented cell $\sigma\cup\sigma'$ is in $X^{d}$, and the
intersection $\sigma\cap\sigma'$ inherits the same orientation from
both $\sigma$ and $\sigma'$. For $j=1$, this can be interpreted
as follows: two edges $e,e'\in X_{\pm}^{1}$ are neighbors if they
bound a triangle in the complex, and the vertex at which they intersect
``inherits the same orientation from both of them'': it is either
the origin of both $e$ and $e'$, or the terminus of both. Finally,
for $j=0$ Definition \ref{def:The--lazy--walk} gives the standard
neighboring relation and $p$-lazy random walk on a graph.
\begin{defn}
\label{def:connectedness}We say that $X$ is \emph{$\left(d-1\right)$-connected}
if the $\left(d-1\right)$-walk on it is irreducible, i.e., for every
pair of oriented $\left(d-1\right)$-cells $\sigma$ and $\sigma'$
there exist a chain $\sigma=\sigma_{0}\sim\sigma_{1}\sim\ldots\sim\sigma_{n}=\sigma'$.
Moreover, having such a chain defines an equivalence relation on the
$\left(d-1\right)$-cells of $X$, whose classes we call the \emph{$\left(d-1\right)$-components}
of $X$.\end{defn}
\begin{rem*}
Due to the assumption of uniformity, it is enough to observe unoriented
cells - $X$ is $\left(d-1\right)$-connected iff for every $\sigma,\sigma'\in X^{d-1}$
there exists a chain of unoriented $\left(d-1\right)$-cells $\sigma=\sigma_{0},\sigma_{1},\ldots,\sigma_{n}=\sigma'$
with $\sigma_{i}\cup\sigma_{i+1}\in X^{d}$ for all $i$. This is
also equivalent to the assertion that for any $\tau,\tau'\in X^{d}$
there is a chain $\tau=\tau_{0},\tau_{1},\ldots,\tau_{m}=\tau'$ of
$d$-cells with $\tau_{i}\cap\tau_{i-1}\in X^{d-1}$ for all $i$
(this is sometimes referred to as a \emph{chamber complex}). We note
that it follows from uniformity that a $\left(d-1\right)$-connected
complex is connected as a topological space. The other direction does
not hold: the complex $\blacktriangleright\!\blacktriangleleft$ is
not $1$-connected, even though it is connected (and uniform).
\end{rem*}
Observing the $\left(d-1\right)$-walk on $X$, we denote by $\mathbf{p}_{n}^{\sigma_{0}}\left(\sigma\right)$
the probability that the random walk starting at $\sigma_{0}$ reaches
$\sigma$ at time $n$. We then have:
\begin{defn}
\label{def:norm-exp-proc}For $d\geq2$, the \emph{expectation process}
on $X$ starting at $\sigma_{0}$ is the sequence of $\left(d-1\right)$-forms
$\left\{ \mathcal{E}_{n}^{\sigma_{0}}\right\} _{n=0}^{\infty}$ defined
by
\[
\mathcal{E}_{n}^{\sigma_{0}}\left(\sigma\right)=\mathbf{p}_{n}^{\sigma_{0}}\left(\sigma\right)-\mathbf{p}_{n}^{\sigma_{0}}\left(\overline{\sigma}\right).
\]
For $d=1$ (i.e.\ graphs) we simply define $\mathcal{E}_{n}^{v_{0}}=\mathbf{p}_{n}^{v_{0}}.$%
\footnote{The results in the paper hold for graphs as well, using this definition
of $\mathcal{E}_{n}^{v_{0}}$, but they are all familiar theorems.
In some cases the proofs are slightly different, and we will not trouble
to handle this special case.%
}

The \emph{normalized expectation process }is defined to be 
\[
\widetilde{\mathcal{E}}_{n}^{\sigma_{0}}\left(\sigma\right)=\left(\frac{d}{p\left(d-1\right)+1}\right)^{n}\mathcal{E}_{n}^{\sigma_{0}}\left(\sigma\right)=\left(\frac{d}{p\left(d-1\right)+1}\right)^{n}\left[\mathbf{p}_{n}^{\sigma_{0}}\left(\sigma\right)-\mathbf{p}_{n}^{\sigma_{0}}\left(\overline{\sigma}\right)\right],
\]
where $p$ is the laziness of the walk. In particular, for $d=1$
one has $\widetilde{\mathcal{E}}_{n}^{v_{0}}=\mathcal{E}_{n}^{v_{0}}=\mathbf{p}_{n}^{v_{0}}$
for all $p$.
\end{defn}
The reason for this particular normalization is that for a lazy enough
process (in particular for $p\geq\frac{1}{2}$) one has $\left\Vert \mathcal{E}_{n}^{\sigma_{0}}\right\Vert =\Theta\left(\left(\frac{p\left(d-1\right)+1}{d}\right)^{n}\right)$
(see (\ref{eq:growth-exp-proc})). Note that $\widetilde{\mathcal{E}}_{0}^{\sigma_{0}}=\mathcal{E}_{0}^{\sigma_{0}}=\mathbbm{1}_{\sigma_{0}}$.
\begin{rem}
\label{rem:process-name}The name ``expectation process'' comes
from the fact that for any $\left(d-1\right)$-form $f$
\[
\mathbb{E}_{n}^{\sigma_{0}}\left[f\right]=\sum_{\sigma\in X_{\pm1}^{d-1}}\mathbf{p}_{n}^{\sigma_{0}}\left(\sigma\right)f\left(\sigma\right)=\sum_{\sigma\in X^{d-1}}\mathcal{E}_{n}^{\sigma_{0}}\left(\sigma\right)f\left(\sigma\right)
\]
where, as implied by the notation, $\mathcal{E}_{n}^{\sigma_{0}}\left(\sigma\right)f\left(\sigma\right)$
does not depend on the orientation of $\sigma$.
\end{rem}
The evolution of the expectation process over time is given by $\mathcal{E}_{n+1}^{\sigma_{0}}=A\mathcal{E}_{n}^{\sigma_{0}}$,
where $A=A\left(X,p\right)$ is the \emph{transition operator }acting
on $\Omega^{d-1}$ by 
\begin{equation}
\left(Af\right)\left(\sigma\right)=pf\left(\sigma\right)+\frac{\left(1-p\right)}{d}\sum_{\sigma'\sim\sigma}\frac{f\left(\sigma'\right)}{\deg\left(\sigma'\right)}\qquad\left(f\in\Omega^{d-1},\sigma\in X^{d-1}\right).\label{eq:transition-operator}
\end{equation}
Note that the evolution of $\mathbf{p}_{n}^{\sigma_{0}}$ is given
by the same $A$, acting on all functions from $X_{\pm}^{d}$ to $\mathbb{R}$,
and not only on forms. 

It is sometimes useful to observe the Markov operator $M=M\left(X,p\right)$
associated with this evolution, which is characterized by 
\[
\mathbb{E}_{n+1}^{\sigma_{0}}\left[f\right]=\mathbb{E}_{n}^{\sigma_{0}}\left[Mf\right],
\]
and is given explicitly by 
\[
\left(Mf\right)\left(\sigma\right)=pf\left(\sigma\right)+\frac{1-p}{d\deg\left(\sigma\right)}\sum_{\sigma'\sim\sigma}f\left(\sigma'\right)\qquad\left(f\in\Omega^{d-1},\sigma\in X^{d-1}\right).
\]
This is the transpose of $A$, w.r.t.\ to a natural choice of basis
for $\Omega^{d-1}\left(X\right)$.

\subsection{\label{sub:Simplicial-complexes-and}Simplicial complexes and Laplacians}

For a cell $\sigma$ and a vertex $v\notin\sigma$, we write $v\triangleleft\sigma$
if $v\sigma=\left\{ v\right\} \cup\sigma$ is a cell in $X$. If $\sigma$
is oriented, $\sigma=\left[\sigma_{0},\ldots,\sigma_{k}\right]$,
and $v\triangleleft\sigma$, then $v\sigma$ denotes the oriented
cell $\left[v,\sigma_{0},\ldots,\sigma_{k}\right]$.

For $0\leq k\leq d$, the $k^{th}$ \emph{boundary operator $\partial_{k}:\Omega^{k}\rightarrow\Omega^{k-1}$}
is defined by 
\[
\left(\partial_{k}f\right)\left(\sigma\right)=\sum_{v\triangleleft\sigma}f\left(v\sigma\right).
\]
In particular $\partial_{0}:\Omega^{0}\rightarrow\Omega^{-1}$ is
defined by $\left(\partial_{0}f\right)\left(\varnothing\right)=\sum_{v\in X^{0}}f(v).$ 

The sequence $\left(\Omega^{k},\partial_{k}\right)$ is the \emph{simplicial
chain complex} of $X$, meaning that $\partial_{k}\partial_{k+1}=0$
for all $k$, giving rise to the $k$\emph{-cycles} $Z_{k}=\ker\partial_{k}$,
the \emph{$k$-boundaries} $B_{k}=\im\partial_{k+1}$ and the (real)
$k^{th}$\emph{-homology} $H_{k}=\nicefrac{Z_{k}}{B_{k}}$.

Given a weight function $w:X\rightarrow\left(0,\infty\right)$, $\Omega^{k}$
become inner product spaces (for $-1\leq k\leq d$) with 
\[
\left\langle f,g\right\rangle =\sum_{\sigma\in X^{k}}w(\sigma)f(\sigma)g(\sigma)\quad\quad\forall f,g\in\Omega^{k}.
\]
Note that the sum is over $X^{k}$ and not $X_{\pm}^{k}$ - this is
well defined since the value of $f\left(\sigma\right)g\left(\sigma\right)$
is independent of the orientation of $\sigma$. 

Since $X$ is finite the spaces $\Omega^{k}$ are finite dimensional,
and there exist adjoint operators to the boundary operators $\partial_{k}$.
These are the \emph{co-boundary operators}, which are denoted by $\delta_{k}=\partial_{k}^{*}:\Omega^{k-1}\rightarrow\Omega^{k}$,
and are given by 
\begin{align*}
\left(\delta_{k}f\right)\left(\sigma\right)=\left(\partial_{k}^{*}f\right)\left(\sigma\right) & =\frac{1}{w\left(\sigma\right)}\sum_{i=0}^{k}\left(-1\right)^{i}w\left(\sigma\backslash\sigma_{i}\right)f\left(\sigma\backslash\sigma_{i}\right)\quad0\leq k\leq d.
\end{align*}

We will stick with the notation $\partial_{k}^{*}$ until we get to
infinite complexes, where sometimes $\delta_{k}$ is defined even
though $\partial_{k}$ is not. The \emph{simplicial cochain complex}
of $X$ is $\left(\Omega_{k},\delta_{k}\right)$, and $Z^{k}=\ker\delta_{k+1}$,
$B^{k}=\im\delta_{k}$, $H^{k}=\nicefrac{Z^{k}}{B^{k}}$ are the \emph{cocycles,
coboundaries }and \emph{cohomology}, respectively. Cocycles are also
known as \emph{closed forms}, and coboundaries as \emph{exact forms}.

The following weight functions will be used throughout this paper%
\footnote{Another natural weight function is the constant one. The obtained
Laplacians are more convenient for isoperimetric analysis. For more
details see \cite{parzanchevski2012isoperimetric}.%
}: 
\[
w\left(\sigma\right)=\begin{cases}
\frac{1}{\deg\sigma} & \sigma\in X^{d-1}\\
1 & \sigma\in X\backslash X^{d-1}
\end{cases}.
\]
Notice that for $\sigma\in X^{d-1}$ 
\[
\frac{1}{w\left(\sigma\right)}=\deg\left(\sigma\right)=\left|\left\{ \tau\in X^{d}\,\middle|\,\sigma\subset\tau\right\} \right|=\left|\left\{ v\,\middle|\, v\triangleleft\sigma\right\} \right|=\frac{1}{d}\left|\left\{ \sigma'\in X^{d-1}\,\middle|\,\sigma'\sim\sigma\right\} \right|.
\]

Due to our choice of weights, the inner product and coboundary operators
are given by
\begin{align}
\left\langle f,g\right\rangle  & =\begin{cases}
\sum\limits _{\sigma\in X^{k}}f\left(\sigma\right)g\left(\sigma\right) & f,g\in\Omega^{k},k\neq d-1\\
\sum\limits _{\sigma\in X^{d-1}}\frac{f\left(\sigma\right)g\left(\sigma\right)}{\deg\sigma} & f,g\in\Omega^{d-1}
\end{cases}\label{eq:our-inner-product}\\
\left(\delta_{k}f\right)\left(\sigma\right)=\left(\partial_{k}^{*}f\right)\left(\sigma\right) & =\begin{cases}
\sum\limits _{i=0}^{k}\left(-1\right)^{i}f\left(\sigma\backslash\sigma_{i}\right) & k\leq d-2\\
\deg\left(\sigma\right)\sum\limits _{i=0}^{d-1}\left(-1\right)^{i}f\left(\sigma\backslash\sigma_{i}\right)\vspace{0.1cm} & k=d-1\\
\sum\limits _{i=0}^{d}\dfrac{\left(-1\right)^{i}f\left(\sigma\backslash\sigma_{i}\right)}{\deg\left(\sigma\backslash\sigma_{i}\right)} & k=d.
\end{cases}\label{eq:coboundary-operator}
\end{align}

Finally, the \emph{upper, lower, and full} \emph{Laplacians} $\Delta^{+},\Delta^{-},\Delta:\Omega^{d-1}\rightarrow\Omega^{d-1}$
are defined by:
\begin{align*}
\Delta^{+} & =\partial_{d}\delta_{d}=\partial_{d}\partial_{d}^{*}\\
\Delta^{-} & =\delta_{d-1}\partial_{d-1}=\partial_{d-1}^{*}\partial_{d-1}\\
\Delta^{\phantom{+}} & =\Delta^{+}+\Delta^{-}.
\end{align*}
An explicit calculation gives 
\begin{gather}
\begin{aligned}\begin{aligned}\left(\Delta^{+}f\right)\left(\sigma\right)\end{aligned}
 & =\sum_{v\triangleleft\sigma}\left(\partial_{d}^{*}f\right)\left(v\sigma\right)=\sum_{v\triangleleft\sigma}\sum_{i=0}^{d}\frac{\left(-1\right)^{i}f\left(v\sigma\backslash\left(v\sigma\right)_{i}\right)}{\deg\left(v\sigma\backslash\left(v\sigma\right)_{i}\right)}\\
 & =f\left(\sigma\right)-\sum_{v\triangleleft\sigma}\sum_{i=0}^{d-1}\frac{\left(-1\right)^{i}f\left(v\left(\sigma\backslash\sigma_{i}\right)\right)}{\deg\left(v\left(\sigma\backslash\sigma_{i}\right)\right)}=f\left(\sigma\right)-\sum_{\sigma'\sim\sigma}\frac{f\left(\sigma'\right)}{\deg\left(\sigma'\right)}
\end{aligned}
\label{eq:LaplacianUp}
\end{gather}
and 
\begin{equation}
\left(\Delta^{-}f\right)\left(\sigma\right)=\deg\sigma\sum_{i=0}^{d-1}\left(-1\right)^{i}\sum_{v\triangleleft\sigma\backslash\sigma_{i}}f\left(v\sigma\backslash\sigma_{i}\right).\label{eq:LaplacianDown}
\end{equation}
More generally, one can define the $k$-th upper, lower and full Laplacians
$\Delta_{k}^{+}=\partial_{k+1}\delta_{k+1}$, $\Delta_{k}^{-}=\delta_{k}\partial_{k}$
and $\Delta_{k}=\Delta_{k}^{+}+\Delta_{k}^{-}$. Apart from $k=d-1$,
these will only make a brief appearance in §\ref{sub:Alon-Boppana-type-theorems}.
The kernel of $\Delta_{k}$ is the space of \emph{harmonic $k$-forms},
denoted by $\mathcal{H}^{k}=\mathcal{H}^{k}\left(X\right)$. 

The spaces defined so far are related by 
\begin{gather*}
\begin{aligned}Z_{k} & =\ker\partial_{k}=\ker\Delta_{k}^{-} & \quad B_{k} & =\left(Z^{k}\right)^{\perp}=\im\partial_{k+1}=\im\Delta_{k}^{+}\\
Z^{k} & =\ker\delta_{k+1}=\ker\Delta_{k}^{+} & \quad B^{k} & =Z_{k}^{\perp}=\im\delta_{k}=\im\Delta_{k}^{-}
\end{aligned}
\\
\mathcal{H}^{k}=\ker\Delta_{k}=Z_{k}\cap Z^{k}=\left(B_{k}\oplus B^{k}\right)^{\perp}\cong H_{k}\cong H^{k}
\end{gather*}
The isomorphism between harmonic functions, homology and cohomology,
which is sometimes called the discrete Hodge theorem, was first observed
in \cite{Eck44}. In a similar manner, there is a ``discrete Hodge
decomposition'' 
\begin{equation}
\Omega^{k}=\rlap{\ensuremath{\overbrace{\phantom{B_{k}\oplus\mathcal{H}^{k}}}^{Z_{k}}}}B_{k}\oplus\underbrace{\mathcal{H}^{k}\oplus B^{k}}_{Z^{k}},\label{eq:hodge-decomp}
\end{equation}
and all the Laplacians decompose with respect to it. All of these
claims follow by linear algebra, using the fact that $\Omega^{k}$
is finite-dimensional (see \cite[§2]{parzanchevski2012isoperimetric}
for the details). For infinite complexes the situation is more involved,
and is addressed in §\ref{sub:General-dimension}.

\subsection{The upper Laplacian spectrum}

In this section we study the spectrum of the upper Laplacian $\Delta^{+}$
of a finite complex $X$. First note that as $\Delta^{+}=\partial_{d}\partial_{d}^{*}$,
its spectrum is non-negative. Furthermore, zero is obtained precisely
on closed forms, i.e.\ $\ker\Delta^{+}=Z^{d-1}$. The space of closed
forms always contains the exact forms, $B^{d-1}=\im\partial_{d-1}^{*}$,
which are considered the \emph{trivial zeros} in the spectrum of $\Delta^{+}$.
The existence of \emph{nontrivial} \emph{zeros} in the spectrum of
$\Delta^{+}$, i.e.\ closed forms which are not exact, indicates
the existence of a nontrivial homology. This leads to the following
definition:
\begin{defn}
\label{The-spectral-gap}The \emph{spectral gap}\textbf{\emph{ }}of
a finite $d$-dimensional complex $X$, denoted $\lambda\left(X\right)$,
is 
\[
\lambda(X)=\min\Spec\left(\Delta^{+}|_{Z_{d-1}}\right)=\min\Spec\left(\Delta|_{Z_{d-1}}\right).
\]
The \emph{essential gap} of $X$, denoted $\tilde{\lambda}(X)$, is
\[
\widetilde{\lambda}(X)=\min\Spec\left(\Delta^{+}|_{B_{d-1}}\right)=\min\Spec\left(\Delta|_{B_{d-1}}\right)
\]
(the transition from $\Delta$ to $\Delta^{+}$ follows from the fact
that $\Delta^{-}$ vanishes on $Z_{d-1}$.)
\end{defn}
Since $\lambda$ vanishes precisely when the $\left(d-1\right)$-homology
of $X$ is nontrivial, it should be thought of as giving a quantitative
measure for the ``triviality of the homology''. For example, in
graphs, having $\lambda\left(X\right)$ far away from zero is a measure
of high-connectedness, or ``high triviality of the $0^{\mathrm{th}}$-homology''.

In contrast, $\widetilde{\lambda}$ never vanishes, as $B_{d-1}=\left(Z^{d-1}\right)^{\perp}=\left(\ker\Delta^{+}\right)^{\perp}$.
If the $\left(d-1\right)$-homology is nontrivial then $\lambda=\widetilde{\lambda}$,
so that $\widetilde{\lambda}$ is only of additional interest when
the homology vanishes. In a disconnected graph $\widetilde{\lambda}$
controls the mixing rate as $\lambda$ does for a connected graph,
and we will see that the same happens in higher dimension (see (\ref{eq:convergence-rate-tilde})).

Until now we have studied one extremity of $\Spec\Delta^{+}$. The
other side relates to the following definition:
\begin{defn}
\label{def:A-disorientation}A \emph{disorientation} of a $d$-complex
$X$ is a choice of orientation $X_{+}^{d}$ of its $d$-cells, so
that whenever $\sigma,\sigma'\in X_{+}^{d}$ intersect in a $\left(d-1\right)$-cell
they induce the same orientation on it. If $X$ has a disorientation
it is said to be \emph{disorientable}.
\end{defn}
\begin{rems*}$ $ \vspace{-6pt}
\begin{enumerate}
\item A disorientable $1$-complex is precisely a bipartite graph, and thus
disorientability should be thought of as a high-dimensional analogue
of bipartiteness. Another natural analogue is ``$\left(d+1\right)$-partiteness'':
having some partition $A_{0},\ldots,A_{d}$ of $V$ so that every
$d$-cell contains one vertex from each $A_{i}$. A $\left(d+1\right)$-partite
complex is easily seen to be disorientable, but the opposite does
not necessarily hold for $d\geq2$.
\item Notice the similarity to the notion of orientability: a $d$-complex
is orientable if there is a choice of orientations $ $of its $d$-cells,
so that cells intersecting in a codimension one cell induce \emph{opposite}
orientations on it. However, orientability implies that $\left(d-1\right)$-cells
have degrees at most two, where as disorientability impose no such
restrictions. Note that a complex can certainly be both orientable
and disorientable (e.g.\ Figure \ref{fig:two-periodic}(a)).
\end{enumerate}
\end{rems*}
\begin{prop}
\label{prop:Laplacian-spec}Let $X$ be a finite complex of dimension
$d$.
\begin{enumerate}
\item \textup{$\Spec\Delta^{+}\left(X\right)$} is the disjoint union of
$\Spec\Delta^{+}\left(X_{i}\right)$ where $X_{i}$ are the $\left(d-1\right)$-components
of $X$.
\item The spectrum of $\Delta^{+}=\Delta^{+}\left(X\right)$ is contained
in $\left[0,d+1\right]$.
\item Zero is achieved on the closed $\left(d-1\right)$-forms, $Z^{d-1}$.
\item If $X$ is $\left(d-1\right)$-connected, then $d+1$ is in the spectrum
iff $X$ is disorientable, and is achieved on the boundaries of disorientations
(see (\ref{eq:disorientation-form})).
\end{enumerate}
\end{prop}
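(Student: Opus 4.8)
I would treat the four parts largely independently, using that $\Delta^{+}=\partial_{d}\partial_{d}^{*}$ is positive semidefinite and local. Part (3) is immediate: since $\langle\Delta^{+}f,f\rangle=\langle\partial_{d}^{*}f,\partial_{d}^{*}f\rangle=\|\delta_{d}f\|^{2}\geq0$, the kernel of $\Delta^{+}$ is exactly $\ker\delta_{d}=Z^{d-1}$, and being positive semidefinite, zero is attained precisely there. For Part (1) I would note that $\Delta^{+}$ is \emph{local}: by (\ref{eq:LaplacianUp}) it couples $\sigma$ only to neighbors $\sigma'\sim\sigma$, and neighbors always lie in the same $(d-1)$-component. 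Hence $\Omega^{d-1}(X)=\bigoplus_{i}\Omega^{d-1}(X_{i})$ is an orthogonal, $\Delta^{+}$-invariant decomposition; since the degree $\deg\sigma$ of a $(d-1)$-cell is computed from the $d$-cells containing it, all of which lie in $\sigma$'s own component, the restriction of $\Delta^{+}(X)$ to the $i$-th block is exactly $\Delta^{+}(X_{i})$. The spectrum is therefore the (multiset) disjoint union of the $\Spec\Delta^{+}(X_{i})$, and uniformity guarantees every $(d-1)$-cell lies in some component.

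\textbf{The upper bound (Part 2).} The lower bound $0$ is positive semidefiniteness. For the upper bound I would expand, using $\delta_{d}=\partial_{d}^{*}$ from (\ref{eq:coboundary-operator}),
\[
\langle\Delta^{+}f,f\rangle=\|\delta_{d}f\|^{2}=\sum_{\tau\in X^{d}}\Bigl(\sum_{i=0}^{d}\tfrac{(-1)^{i}f(\tau\backslash\tau_{i})}{\deg(\tau\backslash\tau_{i})}\Bigr)^{2}.
\]
Applying the elementary inequality $\bigl(\sum_{i=0}^{d}a_{i}\bigr)^{2}\leq(d+1)\sum_{i=0}^{d}a_{i}^{2}$ to each inner sum and then summing, each $(d-1)$-face $\sigma$ is counted exactly $\deg\sigma$ times (once per $d$-cell containing it), so the right-hand side becomes $(d+1)\sum_{\sigma\in X^{d-1}}\deg\sigma\cdot\frac{f(\sigma)^{2}}{\deg(\sigma)^{2}}=(d+1)\|f\|^{2}$. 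This gives $\Spec\Delta^{+}\subseteq[0,d+1]$. The crucial by-product is the \emph{equality condition}: since $d+1$ is the top of the spectrum, $f$ is an eigenform for $d+1$ iff equality holds term by term, i.e.\ for every oriented $\tau=[\tau_{0},\dots,\tau_{d}]$ the quantities $\frac{(-1)^{i}f(\tau\backslash\tau_{i})}{\deg(\tau\backslash\tau_{i})}$ are independent of $i$.

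\textbf{Disorientability (Part 4).} Rewriting the equality condition via antisymmetry, $\frac{(-1)^{i}f(\tau\backslash\tau_{i})}{\deg(\tau\backslash\tau_{i})}=\frac{f(\rho_{\tau})}{\deg\rho}$, where $\rho_{\tau}$ is the orientation $\tau$ \emph{induces} on its face $\rho=\tau\backslash\tau_{i}$. So $f$ is a $(d+1)$-eigenform iff there is a constant $c(\tau)$ with $f(\rho_{\tau})=c(\tau)\deg\rho$ for every face $\rho$ of every $\tau$. For the forward direction I would take a disorientation $X_{+}^{d}$, form the fundamental class $g=\sum_{\tau\in X_{+}^{d}}\mathbbm{1}_{\tau}\in\Omega^{d}$, and set $f=\partial_{d}g\in B_{d-1}$. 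Orienting each $(d-1)$-cell as $\sigma^{+}$, the common orientation induced by the cells of $X_{+}^{d}$ above it, a short computation gives $f(\sigma^{+})=\sum_{v\triangleleft\sigma}g(v\sigma^{+})=\deg\sigma$, whence every term $\frac{f(\rho_{\tau})}{\deg\rho}$ equals $1$ for $\tau\in X_{+}^{d}$; thus $\Delta^{+}f=(d+1)f$, so $d+1\in\Spec\Delta^{+}$ and is attained on this boundary of the disorientation. For the converse, given a nonzero $(d+1)$-eigenform $f$ on a $(d-1)$-connected $X$, I would first show $f$ vanishes nowhere: if $f(\sigma)=0$ then $c(\tau)=0$ for every $\tau\supset\sigma$, forcing $f=0$ on all faces of all such $\tau$, and $(d-1)$-connectivity propagates this to $f\equiv0$, a contradiction. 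Then $c(\tau)\neq0$ for all $d$-cells, and I orient each so that $c(\tau)>0$; if two such $\tau,\tau'$ share a face $\rho$, then $f(\rho_{\tau})>0$ and $f(\rho_{\tau'})>0$ force $\rho_{\tau}=\rho_{\tau'}$ (the alternative $\rho_{\tau'}=\overline{\rho_{\tau}}$ would give opposite signs), which is exactly the disorientation condition.

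\textbf{Main obstacle.} The routine parts (1)--(3) and the Cauchy--Schwarz bound are quick; the delicate point is the sign and orientation bookkeeping in Part (4). In both directions one must track how an oriented $d$-cell induces orientations on its $(d-1)$-faces and reconcile the $(-1)^{i}$ in $\delta_{d}$ with the antisymmetry of $f$, so that the equality-case vector $\bigl(\tfrac{(-1)^{i}f(\tau\backslash\tau_{i})}{\deg}\bigr)_{i}$ is genuinely constant. I expect the verification that $f=\partial_{d}g$ takes the constant value $\deg\sigma$, and the matching check that positivity of $c(\tau)$ encodes coherent induced orientations, to be where all the care is needed.
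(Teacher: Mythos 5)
Your proposal is correct, and for the two substantive parts it takes a genuinely different route from the paper. The paper proves the bound $\Spec\Delta^{+}\subseteq\left[0,d+1\right]$ by a maximum principle: it evaluates $\left(\Delta^{+}f\right)\left(\sigma\right)$ via (\ref{eq:LaplacianUp}) at a cell maximizing $\left|f\left(\sigma\right)\right|/\deg\sigma$ and uses the count of $d\deg\sigma$ neighbors; for part (4) it then propagates maximality of $\left|f\right|/\deg$ along the $\left(d-1\right)$-connected complex, deduces the sign alternation $f\left(\sigma'\right)/\deg\sigma'=-f\left(\sigma\right)/\deg\sigma$ between neighbors, and reads off the disorientation from $F=\partial_{d}^{*}f/\left(d+1\right)$. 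You instead bound the Rayleigh quotient $\left\Vert \delta_{d}f\right\Vert ^{2}\leq\left(d+1\right)\left\Vert f\right\Vert ^{2}$ by Cauchy--Schwarz applied per $d$-cell (the weight bookkeeping, with each face counted $\deg\sigma$ times, is right), and extract from the equality case the clean characterization that $f$ is a $\left(d+1\right)$-eigenform iff $f\left(\rho_{\tau}\right)=c\left(\tau\right)\deg\rho$ for every face $\rho$ of every $\tau$. This buys you a more streamlined part (4): the converse needs only the observation that $f$ vanishes nowhere (propagated by $\left(d-1\right)$-connectedness) and that positivity of $c\left(\tau\right)$ forces coherent induced orientations, with no normalization step; your forward direction, with $g=\sum_{\tau\in X_{+}^{d}}\mathbbm{1}_{\tau}$ and $f=\partial_{d}g$, is the same eigenform as the paper's (\ref{eq:disorientation-form}), verified via the equality condition rather than by direct computation of $\Delta^{+}f$. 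Parts (1) and (3) coincide with the paper's argument. The only looseness is the phrase ``since $d+1$ is the top of the spectrum'': what you actually use is that $\left(d+1\right)I-\Delta^{+}\geq0$, so $\left\langle \Delta^{+}f,f\right\rangle =\left(d+1\right)\left\Vert f\right\Vert ^{2}$ iff $\Delta^{+}f=\left(d+1\right)f$; this is standard and does not presuppose that $d+1$ is attained.
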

\begin{proof}
\emph{(1)} follows from the observation that $\Delta^{+}$ decomposes
w.r.t.\ the decomposition $\Omega^{d-1}\left(X\right)=\bigoplus_{i}\Omega^{d-1}\left(X_{i}\right)$.
We have already seen \emph{(3)}, and the fact that the spectrum is
nonnegative. Now, assume that $\Delta^{+}f=\lambda f$. Choose $\sigma\in X^{d-1}$
which maximize $\frac{\left|f\left(\sigma\right)\right|}{\deg\left(\sigma\right)}$.
By (\ref{eq:LaplacianUp}), 
\[
\lambda f\left(\sigma\right)=\left(\Delta^{+}f\right)\left(\sigma\right)=f\left(\sigma\right)-\sum_{\sigma'\sim\sigma}\frac{f\left(\sigma'\right)}{\deg\left(\sigma'\right)}
\]
and therefore
\[
\left|\lambda f\left(\sigma\right)\right|\leq\left|f\left(\sigma\right)\right|+\sum_{\sigma'\sim\sigma}\frac{\left|f\left(\sigma'\right)\right|}{\deg\left(\sigma'\right)}\leq\left(d+1\right)\left|f\left(\sigma\right)\right|,
\]
(since $\#\left\{ \sigma'\,\middle|\,\sigma'\sim\sigma\right\} =d\deg\sigma$),
hence $\lambda\leq d+1$ and \emph{(2)} is obtained. 

Next, assume that $X$ is $\left(d-1\right)$-connected and that $X_{+}^{d}$
is a disorientation. Define
\begin{equation}
F\left(\tau\right)=\begin{cases}
1 & \tau\in X_{+}^{d}\\
-1 & \tau\in X_{\pm}^{d}\backslash X_{+}^{d}
\end{cases},\label{eq:disorientation-form}
\end{equation}
and $f=\partial_{d}F$. For any $\sigma\in X_{\pm}^{d-1}$, there
exists some vertex $v$ with $v\triangleleft\sigma$ (since $X$ is
uniform). Furthermore, by the assumption on $X_{+}^{d}$, if $v\triangleleft\sigma$
and $v'\triangleleft\sigma$ for vertices $v,v'$ then $v\sigma\in X_{+}^{d}$
if and only if $v'\sigma\in X_{+}^{d}$, and thus 
\[
f\left(\sigma\right)=\left(\partial_{d}F\right)\left(\sigma\right)=\sum_{v\triangleleft\sigma}F\left(v\sigma\right)=\deg\left(\sigma\right)F\left(\tau\right)
\]
where $\tau$ is any $d$-cell containing $\sigma$. If $\sigma$
and $\sigma'$ are neighboring $\left(d-1\right)$-faces in $X_{\pm}^{d-1}$,
then by definition, for some $\tau\in X_{\pm}^{d}$, $\sigma$ is
a face of $\tau$ and $\sigma'$ is a face of $\overline{\tau}$,
so that 
\[
\frac{f\left(\sigma\right)}{\deg\sigma}+\frac{f\left(\sigma'\right)}{\deg\sigma'}=F\left(\tau\right)+F\left(\overline{\tau}\right)=0,
\]
and consequently for any $\sigma\in X_{\pm}^{d-1}$ 
\[
\left(\Delta^{+}f\right)\left(\sigma\right)=f\left(\sigma\right)-\sum_{\sigma'\sim\sigma}\frac{f\left(\sigma'\right)}{\deg\left(\sigma'\right)}=f\left(\sigma\right)-\sum_{\sigma'\sim\sigma}\frac{-f(\sigma)}{\deg\left(\sigma\right)}=\left(d+1\right)f\left(\sigma\right),
\]
so that $f$ is a $\Delta^{+}$-eigenform with eigenvalue $d+1$.

In the other direction, assume that $X$ is $\left(d-1\right)$-connected
and that $\Delta^{+}f=\left(d+1\right)f$ for some $f\in\Omega^{d-1}\left(X\right)\backslash\left\{ 0\right\} $.
Fix some $\widetilde{\sigma}\in X_{\pm}^{d-1}$ which maximize $\frac{\left|f\left(\sigma\right)\right|}{\deg\sigma}$,
normalize $f$ so that $\frac{\left|f\left(\widetilde{\sigma}\right)\right|}{\deg\widetilde{\sigma}}=1$,
and define
\[
F=\frac{\partial_{d}^{*}f}{d+1},\quad X_{+}^{d}=\left\{ \tau\in X_{\pm}^{d}\,\middle|\, F\left(\tau\right)>0\right\} .
\]
We have $f=\frac{\Delta^{+}f}{d+1}=\frac{\partial_{d}\partial_{d}^{*}f}{d+1}=\partial_{d}F$
by assumption, and we proceed to show that $X_{+}^{d}$ is a disorientation
with $F$ the corresponding form as in (\ref{eq:disorientation-form}).
By the definition of $\Delta^{+}$  
\[
\deg\widetilde{\sigma}=\left|f\left(\widetilde{\sigma}\right)\right|=\frac{1}{d}\left|\sum_{\sigma\sim\widetilde{\sigma}}\frac{f\left(\sigma\right)}{\deg\left(\sigma\right)}\right|\leq\frac{1}{d}\sum_{\sigma\sim\widetilde{\sigma}}\frac{\left|f\left(\sigma\right)\right|}{\deg\left(\sigma\right)}\leq\frac{1}{d}\sum_{\sigma\sim\widetilde{\sigma}}1=\deg\widetilde{\sigma},
\]
so that $\frac{\left|f\left(\sigma\right)\right|}{\deg\sigma}=1$
for every $\sigma\sim\widetilde{\sigma}$. Continuing in this manner,
$\left(d-1\right)$-connectedness implies that $\frac{\left|f\left(\sigma\right)\right|}{\deg\sigma}\equiv1$
on all $X_{\pm}^{d}$. Using again the definition of $\Delta^{+}$,
for any $\sigma$ in $X_{\pm}^{d}$ 
\[
\frac{f\left(\sigma\right)}{\deg\sigma}=-\frac{1}{\deg\sigma\cdot d}\sum_{\sigma'\sim\sigma}\frac{f\left(\sigma'\right)}{\deg\left(\sigma'\right)}.
\]
Since the r.h.s is an average over terms whose absolute value is that
of the l.h.s this gives $\frac{f\left(\sigma'\right)}{\deg\sigma'}=-\frac{f\left(\sigma\right)}{\deg\sigma}$
whenever $\sigma\sim\sigma'$, hence 
\[
F\left(\tau\right)=\frac{1}{d+1}\sum_{i=0}^{d}\frac{\left(-1\right)^{i}f\left(\tau\backslash\tau_{i}\right)}{\deg\left(\tau\backslash\tau_{i}\right)}=\frac{f\left(\tau\backslash\tau_{0}\right)}{\deg\left(\tau\backslash\tau_{0}\right)}
\]
is always of absolute value one. Furthermore, if $\tau,\tau'\in X_{\pm}^{d}$
intersect in a face $\sigma$ and induce opposite orientations on
it, then $\tau=v\sigma$ and $\tau'=\overline{v'\sigma}$ for some
vertices $v,v'$, hence 
\[
F\left(\tau\right)=F\left(v\sigma\right)=\frac{f\left(\sigma\right)}{\deg\sigma}=F\left(v'\sigma\right)=-F\left(\overline{v'\sigma}\right)=-F\left(\tau'\right)
\]
which concludes the proof.
\end{proof}

\subsection{\label{sub:Walk-and-spectrum}Walk and spectrum}

The $\left(d-1\right)$-walk defined in §\ref{sub:The-walk} is related
to the Laplacians from §\ref{sub:Simplicial-complexes-and} as follows:
\begin{prop}
\label{Prop:random_Laplacian_connection}Observe the $p$-lazy $\left(d-1\right)$-walk
on $X$ starting at $\sigma_{0}\in X_{\pm}^{d-1}$. Then
\begin{enumerate}
\item The transition operator $A=A\left(X,p\right)$ is given by 
\[
A=\frac{p(d-1)+1}{d}\cdot I-\frac{1-p}{d}\cdot\Delta^{+},
\]
so that 
\[
\mathcal{E}_{n}^{\sigma_{0}}=A^{n}\mathcal{E}_{0}^{\sigma_{0}}=\left(\frac{p(d-1)+1}{d}\cdot I-\frac{1-p}{d}\cdot\Delta^{+}\right)^{n}\mathcal{E}_{0}^{\sigma_{0}}.
\]

\item The spectrum of $A$ is contained in $\left[2p-1,\frac{p\left(d-1\right)+1}{d}\right]$,
with $2p-1$ achieved by disorientations, and $\frac{p\left(d-1\right)+1}{d}$
by closed forms ($Z^{d-1}$). 
\item The expectation process satisfies
\begin{equation}
\frac{1}{\sqrt{K_{d-2}K_{d-1}}}\left(\frac{p\left(d-1\right)+1}{d}\right)^{n}\leq\left\Vert \mathcal{E}_{n}^{\sigma_{0}}\right\Vert \leq\max\left(\left|2p-1\right|,\frac{p\left(d-1\right)+1}{d}\right)^{n}\label{eq:growth-exp-proc}
\end{equation}
where $K_{j}$ is the maximal degree of a $j$-cell in $X$.
\end{enumerate}
\end{prop}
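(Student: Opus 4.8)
The plan is to prove the three parts in order, since each feeds into the next. For part (1), I would compute the action of the transition operator $A$ on a $(d-1)$-form $f$ directly from equation (\ref{eq:transition-operator}), which reads $(Af)(\sigma)=pf(\sigma)+\frac{1-p}{d}\sum_{\sigma'\sim\sigma}\frac{f(\sigma')}{\deg(\sigma')}$. The key observation is that the sum $\sum_{\sigma'\sim\sigma}\frac{f(\sigma')}{\deg(\sigma')}$ is precisely the off-diagonal part of $\Delta^{+}$ as given in (\ref{eq:LaplacianUp}), namely $(\Delta^{+}f)(\sigma)=f(\sigma)-\sum_{\sigma'\sim\sigma}\frac{f(\sigma')}{\deg(\sigma')}$. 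Solving this for the sum and substituting into the formula for $A$ should give, after collecting the coefficient of $f(\sigma)$, exactly $A=\frac{p(d-1)+1}{d}I-\frac{1-p}{d}\Delta^{+}$. One must check that $A$ genuinely preserves the space of forms $\Omega^{d-1}$ (antisymmetry under orientation flip), but this is immediate since $\Delta^{+}$ does and the identity trivially does. The iterated formula $\mathcal{E}_n^{\sigma_0}=A^n\mathcal{E}_0^{\sigma_0}$ then follows from the recursion $\mathcal{E}_{n+1}^{\sigma_0}=A\mathcal{E}_n^{\sigma_0}$ established in the discussion preceding (\ref{eq:transition-operator}), together with $\mathcal{E}_0^{\sigma_0}=\mathbbm{1}_{\sigma_0}$.

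For part (2), the spectral mapping is the natural tool: since $A$ is an affine function of $\Delta^{+}$, we have $\Spec(A)=\{\frac{p(d-1)+1}{d}-\frac{1-p}{d}\mu : \mu\in\Spec(\Delta^{+})\}$. Proposition \ref{prop:Laplacian-spec} tells us $\Spec(\Delta^{+})\subseteq[0,d+1]$, with $0$ achieved on closed forms $Z^{d-1}$ and $d+1$ achieved exactly by disorientations (when $X$ is $(d-1)$-connected). Plugging $\mu=0$ into the affine map gives the eigenvalue $\frac{p(d-1)+1}{d}$, realized on $Z^{d-1}$; plugging $\mu=d+1$ gives $\frac{p(d-1)+1}{d}-\frac{1-p}{d}(d+1)=\frac{p(d-1)+1-(1-p)(d+1)}{d}=\frac{pd-p+1-d-1+pd+p}{d}=\frac{2pd-d}{d}=2p-1$, realized by disorientations. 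Since the affine map $\mu\mapsto\frac{p(d-1)+1}{d}-\frac{1-p}{d}\mu$ is decreasing (as $1-p>0$), the image of $[0,d+1]$ is exactly the interval $[2p-1,\frac{p(d-1)+1}{d}]$, giving the claimed containment.

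For part (3), the strategy is to use the Hodge decomposition (\ref{eq:hodge-decomp}) to locate $\mathcal{E}_0^{\sigma_0}=\mathbbm{1}_{\sigma_0}$ relative to the eigenspaces of $A$. The upper bound $\|\mathcal{E}_n^{\sigma_0}\|\le\max(|2p-1|,\frac{p(d-1)+1}{d})^n\|\mathcal{E}_0^{\sigma_0}\|$ follows from the operator norm bound $\|A^n\|\le\|A\|^n$ together with part (2) bounding the spectral radius, once one checks $\|\mathbbm{1}_{\sigma_0}\|=1$ in the weighted inner product (\ref{eq:our-inner-product}); indeed $\|\mathbbm{1}_{\sigma_0}\|^2=\frac{1}{\deg\sigma_0}\cdot 1=\frac{1}{\deg\sigma_0}\le 1$, which handles the upper estimate. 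The lower bound is the main obstacle and requires showing that $\mathbbm{1}_{\sigma_0}$ has a nonnegligible component in the top eigenspace $Z^{d-1}$ (eigenvalue $\frac{p(d-1)+1}{d}$), so that this mode dominates the norm asymptotically. Concretely, I would bound $\|P_{Z^{d-1}}\mathbbm{1}_{\sigma_0}\|$ from below, where $P_{Z^{d-1}}$ is the orthogonal projection onto closed forms; since $A$ acts on $Z^{d-1}$ as the scalar $\frac{p(d-1)+1}{d}$, one gets $\|\mathcal{E}_n^{\sigma_0}\|\ge(\frac{p(d-1)+1}{d})^n\|P_{Z^{d-1}}\mathbbm{1}_{\sigma_0}\|$. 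The constant $\frac{1}{\sqrt{K_{d-2}K_{d-1}}}$ suggests estimating this projection by pairing $\mathbbm{1}_{\sigma_0}$ against a suitable closed form built from coboundaries $\delta_{d-1}\mathbbm{1}_{\rho}$ for faces $\rho$ of $\sigma_0$; the factors $K_{d-2}$ and $K_{d-1}$ (maximal degrees of $(d-2)$- and $(d-1)$-cells) should enter precisely as the normalization constants controlling the norms of these test forms, and tracking the weights in (\ref{eq:our-inner-product}) and (\ref{eq:coboundary-operator}) carefully is where the real work lies.
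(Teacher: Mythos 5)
Your proposal follows the paper's proof essentially step for step: part (1) by matching the transition formula against the explicit expression for $\Delta^{+}$, part (2) by the decreasing affine spectral map applied to $\Spec\Delta^{+}\subseteq\left[0,d+1\right]$ from Proposition \ref{prop:Laplacian-spec}, and part (3) by the operator-norm bound for the upper estimate and, for the lower estimate, bounding $\bigl\Vert \mathbb{P}_{Z^{d-1}}\mathbbm{1}_{\sigma_{0}}\bigr\Vert$ from below against the normalized test form $\delta_{d-1}\mathbbm{1}_{\rho}$ for a face $\rho$ of $\sigma_{0}$ --- which is exactly the paper's choice, with $\Vert\delta_{d-1}\mathbbm{1}_{\rho}\Vert^{2}=\sum_{i}\deg\sigma_{i}\leq K_{d-2}K_{d-1}$ supplying the constant. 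The only part you leave as a sketch is this last weight computation, but the plan is correct and it goes through exactly as you describe.
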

\begin{proof}
\emph{(1)} follows trivially from (\ref{eq:transition-operator})
and (\ref{eq:LaplacianUp}), and Proposition \ref{prop:Laplacian-spec}
then implies \emph{(2)}. The upper bound in \emph{(3)} follows from
\emph{(2)} by $\mathcal{E}_{n}^{\sigma_{0}}=A^{n}\mathcal{E}_{0}^{\sigma_{0}}$
and $\left\Vert \mathcal{\mathcal{E}}_{0}^{\sigma_{0}}\right\Vert =\left\Vert \mathbbm{1}_{\sigma_{0}}\right\Vert =\frac{1}{\sqrt{\deg\sigma_{0}}}\leq1$.
For the lower bound, let $v$ be a vertex in $\sigma_{0}$, and $\sigma_{0},\ldots,\sigma_{k}$
the $\left(d-1\right)$-cells containing $\sigma_{0}\backslash v$.
Define $f=\partial_{d}^{*}\mathbbm{1}_{\sigma_{0}\backslash v}=\sum_{i=0}^{k}\deg\sigma_{i}\cdot\mathbbm{1}_{\sigma_{i}}$,
so that $f\in Z^{d-1}$ and $\left\Vert f\right\Vert ^{2}=\sum_{i=0}^{k}\deg\sigma_{i}\leq K_{d-2}K_{d-1}$.
Since $\Delta^{+}$ decomposes w.r.t.\ the orthogonal sum $\Omega^{d-1}=Z^{d-1}\oplus B_{d-1}$
so does $A=\frac{p(d-1)+1}{d}\cdot I-\frac{1-p}{d}\cdot\Delta^{+}$,
hence  by \emph{(2) }
\begin{gather*}
\left\Vert \mathcal{E}_{n}^{\sigma_{0}}\right\Vert =\left\Vert A^{n}\mathbbm{1}_{\sigma_{0}}\right\Vert \geq\left({\textstyle \frac{p\left(d-1\right)+1}{d}}\right)^{n}\left\Vert \mathbb{P}_{Z^{d-1}}\left(\mathbbm{1}_{\sigma_{0}}\right)\right\Vert \geq\left({\textstyle \frac{p\left(d-1\right)+1}{d}}\right)^{n}\left|\left\langle {\textstyle \frac{f}{\left\Vert f\right\Vert }},\mathbbm{1}_{\sigma_{0}}\right\rangle \right|\\
=\left({\textstyle \frac{p\left(d-1\right)+1}{d}}\right)^{n}\frac{\left|f\left(\sigma_{0}\right)\right|}{\left\Vert f\right\Vert \deg\sigma_{0}}\geq\frac{1}{\sqrt{K_{d-2}K_{d-1}}}\left({\textstyle \frac{p\left(d-1\right)+1}{d}}\right)^{n}.
\end{gather*}

\end{proof}
This proposition leads to the connection between the asymptotic behavior
of the $\left(d-1\right)$-walk and the homology and spectrum of the
complex:
\begin{thm}
\label{thm:walk-and-spec}Let $\mathcal{\widetilde{E}}_{n}^{\sigma}$
be the normalized expectation process associated with the $p$-lazy
$\left(d-1\right)$-walk on $X$ starting from $\sigma$ (see Definitions
\ref{def:The--lazy--walk}, \ref{def:norm-exp-proc}). Then $\widetilde{\mathcal{E}}_{\infty}^{\sigma}=\lim_{n\rightarrow\infty}\mathcal{\widetilde{E}}_{n}^{\sigma}$
exists and satisfies the following:
\begin{enumerate}
\item If $\frac{d-1}{3d-1}<p<1$, then $\widetilde{\mathcal{E}}_{\infty}^{\sigma}$
is exact for every starting point $\sigma$ if and only if \textup{$H_{d-1}(X)=0$.}%
\footnote{Note that the first value of $p$ for which the homology can be studied
via the walk in every dimension is $p=\frac{1}{3}$.%
}\textup{ If furthermore $p\geq\frac{1}{2}$ then}
\begin{equation}
\dist\left(\widetilde{\mathcal{E}}_{n}^{\sigma},B^{d-1}\right)=O\left(\left(1-\frac{1-p}{p\left(d-1\right)+1}\lambda\left(X\right)\right)^{n}\right).\label{eq:convergence-rate}
\end{equation}

\item More generally, the dimension of $H_{d-1}\left(X\right)$ equals the
dimension of $\mathrm{Span}\left\{ \mathbb{P}_{Z_{d-1}}\left(\widetilde{\mathcal{E}}_{\infty}^{\sigma}\right)\,\middle|\,\sigma\in X^{d-1}\right\} $.
\item If $p=\frac{d-1}{3d-1}$ then $\widetilde{\mathcal{E}}_{\infty}^{\sigma}$
is exact for all $\sigma$ if and only if $X$ has a trivial $\left(d-1\right)$-homology
and no disorientable $\left(d-1\right)$-components.
\item More generally, if $\frac{d-1}{3d-1}<p<1$ then $\widetilde{\mathcal{E}}_{\infty}^{\sigma}$
is closed, and likewise for $p=\frac{d-1}{3d-1}$, unless $X$ has
a disorientable $\left(d-1\right)$-component. If $p\geq\frac{1}{2}$
then 
\begin{equation}
\dist\left(\widetilde{\mathcal{E}}_{n}^{\sigma},Z^{d-1}\right)=O\left(\left(1-\frac{1-p}{p\left(d-1\right)+1}\widetilde{\lambda}\left(X\right)\right)^{n}\right).\label{eq:convergence-rate-tilde}
\end{equation}

\end{enumerate}
\end{thm}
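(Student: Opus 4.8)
The plan is to reduce the whole statement to the spectral calculus of one self-adjoint operator. By Definition \ref{def:norm-exp-proc} and Proposition \ref{Prop:random_Laplacian_connection}(1) one has $\widetilde{\mathcal{E}}_n^{\sigma}=\widetilde{A}^n\mathbbm{1}_{\sigma}$, where
\[
\widetilde{A}=\frac{d}{p(d-1)+1}\,A=I-\frac{1-p}{p(d-1)+1}\,\Delta^{+}.
\]
Since $\Delta^{+}=\partial_{d}\partial_{d}^{*}$ is self-adjoint and positive for the inner product (\ref{eq:our-inner-product}), so is $\widetilde{A}$, and for $p<1$ its eigenvalues are exactly $1-\frac{1-p}{p(d-1)+1}\lambda$ as $\lambda$ ranges over $\Spec\Delta^{+}\subseteq[0,d+1]$. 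The eigenvalue $1$ is attained precisely on $\ker\Delta^{+}=Z^{d-1}$, while the smallest eigenvalue $\frac{d(2p-1)}{p(d-1)+1}$ is attained, by Proposition \ref{prop:Laplacian-spec}(4), on the disorientation forms. A one-line computation shows that this smallest eigenvalue equals $-1$ exactly when $p=\frac{d-1}{3d-1}$, which is the arithmetic origin of the threshold in the statement.

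First I would treat the open range $\frac{d-1}{3d-1}<p<1$. Here every eigenvalue of $\widetilde{A}$ other than $1$ lies strictly inside $(-1,1)$, so writing $\widetilde{A}=\sum_{i}\alpha_{i}\mathbb{P}_{i}$ in its spectral decomposition gives $\widetilde{A}^n\to\mathbb{P}_{Z^{d-1}}$; hence the limit exists and $\widetilde{\mathcal{E}}_{\infty}^{\sigma}=\mathbb{P}_{Z^{d-1}}\mathbbm{1}_{\sigma}$ is closed, which is the first half of (4). For exactness I would use the Hodge splitting (\ref{eq:hodge-decomp}): $\mathbb{P}_{Z^{d-1}}\mathbbm{1}_{\sigma}\in B^{d-1}$ iff its harmonic part $\mathbb{P}_{\mathcal{H}^{d-1}}\mathbbm{1}_{\sigma}$ vanishes. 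As the Dirac forms $\mathbbm{1}_{\sigma}$ span $\Omega^{d-1}$, this holds for every $\sigma$ iff $\mathbb{P}_{\mathcal{H}^{d-1}}=0$, i.e.\ $\mathcal{H}^{d-1}=0$, which by the discrete Hodge isomorphism is exactly $H_{d-1}(X)=0$; this proves (1). The same spanning remark gives (2): since $\mathbb{P}_{Z_{d-1}}\mathbb{P}_{Z^{d-1}}=\mathbb{P}_{\mathcal{H}^{d-1}}$ (both factors respect the Hodge splitting, so their product projects onto $Z_{d-1}\cap Z^{d-1}=\mathcal{H}^{d-1}$), the forms $\mathbb{P}_{Z_{d-1}}\widetilde{\mathcal{E}}_{\infty}^{\sigma}=\mathbb{P}_{\mathcal{H}^{d-1}}\mathbbm{1}_{\sigma}$ span all of $\mathcal{H}^{d-1}$, whose dimension is $\dim H_{d-1}(X)$.

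Next come the convergence rates, which is where the extra hypothesis $p\geq\frac{1}{2}$ enters. Using $B^{d-1}=Z_{d-1}^{\perp}$ and $Z^{d-1}=B_{d-1}^{\perp}$, and the fact that $\widetilde{A}$ commutes with the Hodge projections, the two distances are $\dist(\widetilde{\mathcal{E}}_{n}^{\sigma},B^{d-1})=\|\widetilde{A}^n\mathbb{P}_{Z_{d-1}}\mathbbm{1}_{\sigma}\|$ and $\dist(\widetilde{\mathcal{E}}_{n}^{\sigma},Z^{d-1})=\|\widetilde{A}^n\mathbb{P}_{B_{d-1}}\mathbbm{1}_{\sigma}\|$. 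On $Z_{d-1}$ the relevant $\Delta^{+}$-eigenvalues are $\geq\lambda(X)$ and on $B_{d-1}$ they are $\geq\widetilde{\lambda}(X)$, so the slowest-decaying eigenvalue of $\widetilde{A}$ at the \emph{large} end of $\Spec\Delta^{+}$ is $1-\frac{1-p}{p(d-1)+1}\lambda(X)$, respectively $1-\frac{1-p}{p(d-1)+1}\widetilde{\lambda}(X)$. The role of $p\geq\frac{1}{2}$ is exactly that then the smallest eigenvalue $\frac{d(2p-1)}{p(d-1)+1}$ is nonnegative, hence cannot exceed this quantity in absolute value; this yields the operator-norm bounds (\ref{eq:convergence-rate}) and (\ref{eq:convergence-rate-tilde}). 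When $H_{d-1}\neq0$ the first bound is vacuous, since then $\lambda(X)=0$.

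Finally, the boundary case $p=\frac{d-1}{3d-1}$, which I expect to be the main obstacle, because here $-1\in\Spec\widetilde{A}$ precisely when $X$ has a disorientable $(d-1)$-component. If there is no such component, then by Proposition \ref{prop:Laplacian-spec}(4) applied componentwise (via Proposition \ref{prop:Laplacian-spec}(1)) one has $d+1\notin\Spec\Delta^{+}$, so $-1\notin\Spec\widetilde{A}$ and the argument above runs verbatim, giving a closed limit that is exact for every $\sigma$ iff $H_{d-1}(X)=0$. If a disorientable component is present, its disorientation form $f$ satisfies $\langle\mathbbm{1}_{\sigma},f\rangle=\pm1\neq0$ for $\sigma$ in that component, so $\mathbbm{1}_{\sigma}$ has a nonzero $(-1)$-eigencomponent and $\widetilde{A}^n\mathbbm{1}_{\sigma}$ oscillates; the limit then fails to exist and $\widetilde{\mathcal{E}}_{\infty}^{\sigma}$ is not exact. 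Combining the two sub-cases yields the equivalence in (3) and the qualification in (4). The delicate points are tracking which configurations place $-1$ in the spectrum, and correctly interpreting ``exact'' when the limit oscillates.
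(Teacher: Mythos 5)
Your proposal is correct and follows essentially the same route as the paper: diagonalize $\widetilde{A}=I-\frac{1-p}{p(d-1)+1}\Delta^{+}$ against the Hodge decomposition, get $\widetilde{A}^{n}\to\mathbb{P}_{Z^{d-1}}$ when all non-unit eigenvalues lie in $(-1,1)$, use the discrete Hodge isomorphism and the spanning of $\Omega^{d-1}$ by Dirac forms for (1)--(2), positivity of $\widetilde{A}$ for $p\geq\frac12$ for the rates, and the $-1$ eigenvalue/disorientation form for the threshold $p=\frac{d-1}{3d-1}$. The only cosmetic difference is at that threshold: the paper extracts the limit of $\widetilde{A}^{2n}$ (which is $\mathbb{P}_{Z^{d-1}}+\mathbb{P}_{\Omega_{2p-1}^{d-1}}$) rather than arguing, as you do, that the presence of a disorientable component makes $\widetilde{A}^{n}\mathbbm{1}_{\sigma}$ oscillate; both resolve the case equivalently.
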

\begin{proof}
\textbf{}

\textbf{Case $\boldsymbol{\left(i\right)\:-\:\frac{d-1}{3d-1}<p<1}$}:
We have $\left|2p-1\right|<\frac{p\left(d-1\right)+1}{d}$, so that
$\left\Vert A\right\Vert =\max\Spec A=\frac{p\left(d-1\right)+1}{d}$.
Thus, 
\[
\Spec A\big|_{B_{d-1}}\subseteq\left[2p-1,\frac{p\left(d-1\right)+1}{d}\right)\subseteq\left(-\left\Vert A\right\Vert ,\left\Vert A\right\Vert \right).
\]
Since $A$ decomposes w.r.t.\ $\Omega^{d-1}=B_{d-1}\oplus Z^{d-1}$,
and $A\big|_{Z^{d-1}}=\left\Vert A\right\Vert \cdot I\big|_{Z^{d-1}}$,
this means that $\left(\frac{A}{\left\Vert A\right\Vert }\right)^{n}$
converges to the orthogonal projection $\mathbb{P}_{Z^{d-1}}$. Now
$\widetilde{\mathcal{E}}_{n}^{\sigma}=\left(\frac{d}{p\left(d-1\right)+1}\right)^{n}\mathcal{E}_{n}^{\sigma}=\left(\frac{A}{\left\Vert A\right\Vert }\right)^{n}\mathcal{E}_{0}^{\sigma}$,
which shows that
\begin{equation}
\widetilde{\mathcal{E}}_{\infty}^{\sigma}=\mathbb{P}_{Z^{d-1}}\left(\widetilde{\mathcal{E}}_{0}^{\sigma}\right)=\mathbb{P}_{Z^{d-1}}\left(\mathcal{E}_{0}^{\sigma}\right)=\mathbb{P}_{Z^{d-1}}\left(\one_{\sigma}\right).\label{eq:inf-is-cocycle}
\end{equation}
In particular $\widetilde{\mathcal{E}}_{\infty}^{\sigma}$ is closed,
so that if the homology of $X$ is trivial then it is exact. On the
other hand, assume that $\widetilde{\mathcal{E}}_{\infty}^{\sigma}$
is exact for all $\sigma$: then 
\[
\widetilde{\mathcal{E}}_{\infty}^{\sigma}=\mathbb{P}_{Z^{d-1}}\left(\mathcal{E}_{0}^{\sigma}\right)=\mathbb{P}_{Z^{d-1}}\left(\mathbbm{1}_{\sigma}\right)=\mathbb{P}_{B^{d-1}}\left(\mathbbm{1}_{\sigma}\right)+\mathbb{P}_{\mathcal{H}^{d-1}}\left(\mathbbm{1}_{\sigma}\right)
\]
so that $\mathbb{P}_{\mathcal{H}^{d-1}}\left(\mathbbm{1}_{\sigma}\right)=0$
by (\ref{eq:hodge-decomp}). As $\left\{ \one_{\sigma}\right\} $
span $\Omega^{d-1}$, this shows that $H_{d-1}\cong\mathcal{\mathcal{H}}^{d-1}=0$.
To further understand the dimension of the homology, observe that
\[
\mathrm{Span}\left\{ \mathbb{P}_{Z_{d-1}}\left(\widetilde{\mathcal{E}}_{\infty}^{\sigma}\right)\,\middle|\,\sigma\in X^{d-1}\right\} =\mathcal{H}^{d-1}\left(X\right),
\]
which follows from 
\[
\mathbb{P}_{Z_{d-1}}\left(\widetilde{\mathcal{E}}_{\infty}^{\sigma}\right)=\mathbb{P}_{Z_{d-1}}\left(\mathbb{P}_{Z^{d-1}}\left(\one_{\sigma}\right)\right)=\mathbb{P}_{\mathcal{H}^{d-1}}\left(\one_{\sigma}\right).
\]
If $p\geq\frac{1}{2}$ then we know not only that $\left\Vert A\right\Vert =\max\Spec A$
but also that $\left\Vert A\big|_{Z_{d-1}}\right\Vert =\max\Spec\left(A\big|_{Z_{d-1}}\right)$,
which allows us to say more. In this case $A$ is positive semidefinite,
so that (\ref{eq:convergence-rate-tilde}) follows by 
\begin{multline*}
\left\Vert \left(\frac{d}{p\left(d-1\right)+1}A\right)^{n}-\mathbb{P}_{Z^{d-1}}\right\Vert =\left\Vert \left(\frac{d}{p\left(d-1\right)+1}A\big|_{B_{d-1}}\right)^{n}\right\Vert \\
=\left\Vert \left(I-\frac{1-p}{p\left(d-1\right)+1}\cdot\Delta^{+}\right)^{n}\big|_{B_{d-1}}\right\Vert =\left(1-\frac{1-p}{p\left(d-1\right)+1}\widetilde{\lambda}\left(X\right)\right)^{n},
\end{multline*}
which gives (\ref{eq:convergence-rate}) as well when the homology
is trivial. 

\textbf{Case $\boldsymbol{\left(ii\right)\:-\: p=\frac{d-1}{3d-1}}$}:
Now, $\left|2p-1\right|=\frac{p\left(d-1\right)+1}{d}=\left\Vert A\right\Vert $.
If $X$ has no disorientable $\left(d-1\right)$-components then again
$\Spec A\big|_{B_{d-1}}\subseteq\left(-\left\Vert A\right\Vert ,\left\Vert A\right\Vert \right)$,
which gives (\ref{eq:inf-is-cocycle}), and everything is as before.
On the other hand, let us assume that $\widetilde{\mathcal{E}}_{\infty}^{\sigma}$
is closed for all $\sigma$. Denoting by $\Omega_{\lambda}^{d-1}$
the $\lambda$-eigenspace of $A$, now $\left(\frac{d}{p\left(d-1\right)+1}A\right)^{2n}$
converges to $\mathbb{P}_{Z^{d-1}}+\mathbb{P}_{\Omega_{2p-1}^{d-1}}$
($\Delta^{+}$ is diagonalizable and consequently so is $A$). Since
$\widetilde{\mathcal{E}}_{\infty}^{\sigma}$ is closed this shows
that $\mathbb{P}_{\Omega_{2p-1}^{d-1}}\left(\mathbbm{1}_{\sigma}\right)=0$,
and consequently that $\Omega_{2p-1}^{d-1}=0$, i.e.\ X has no disorientable
$\left(d-1\right)$-components.
\end{proof}
\nopagebreak \begin{rems*}$ $\vspace{-6pt} \nopagebreak
\begin{enumerate}
\item \nopagebreak The study of complexes via $\left(d-1\right)$-walk
gives a conceptual reason to the fact that the high-dimensional case
is harder than that of graphs: while graphs are studied by the evolution
of probabilities, analogue properties of high-dimensional complexes
are reflected in the expectation process. As this is given by the
difference of two probability vectors, it is much harder to analyze.
Several examples of this appear in the open questions in §\ref{sec:Open-Questions}.
\item In order to study the connectedness of a graph it is enough to observe
the walk starting at one vertex. If $\mathbf{p}_{\infty}^{v_{0}}$
is not exact (i.e.\ not proportional to the degree function) for
even one $v_{0}$, then the graph is necessarily disconnected. In
general dimension, however, this is not enough: there are complexes
(even $\left(d-1\right)$-connected ones!) with nontrivial $\left(d-1\right)$-homology,
such that $\widetilde{\mathcal{E}}_{\infty}^{\sigma}$ is exact for
a carefully chosen $\sigma$.
\item If one starts the process with a general initial distribution $\mathbf{p}_{0}$\textbf{
}instead of the Dirac probability $\one_{\sigma}$, then Theorem \ref{thm:walk-and-spec}
holds for the corresponding expectation process (i.e.\ $\mathcal{E}_{0}\left(\sigma\right)=\mathbf{p}_{0}\left(\sigma\right)-\mathbf{p}_{0}\left(\overline{\sigma}\right)$,
$\mathcal{E}_{n+1}=A\mathcal{E}_{n}$). Furthermore, in these settings
a disorientable component corresponds to a distribution for which
$\widetilde{\mathcal{E}}_{n}$ is $2$-periodic for $p=\frac{d-1}{3d-1}$
(see Figure \ref{fig:two-periodic}(a)); a nontrivial homology corresponds
to a distribution which induces a stationery non-exact $\widetilde{\mathcal{E}}_{n}$
for $p\geq\frac{d-1}{3d-1}$ (see Figure \ref{fig:two-periodic}(b)).
\end{enumerate}
\end{rems*}

\begin{figure}[h]
\centering{}%
\begin{minipage}[t]{0.45\columnwidth}%
\begin{center}
\includegraphics{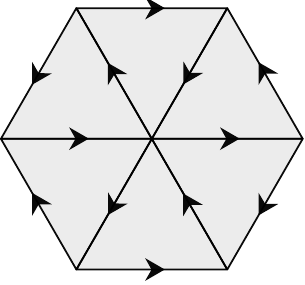}\\
(a)
\par\end{center}%
\end{minipage}%
\begin{minipage}[t]{0.45\columnwidth}%
\begin{center}
\includegraphics{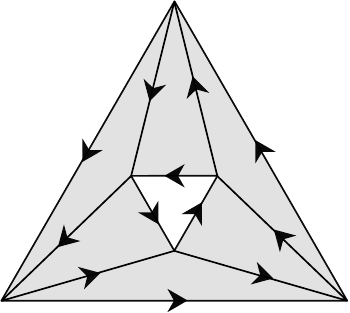}\\
(b)
\par\end{center}%
\end{minipage}\caption{\label{fig:two-periodic}Two distributions on the edges of $2$-complexes
(the orientations drawn have uniform probability, and their inverses
probability zero). (a) is a distribution for which $\widetilde{\mathcal{E}}_{n}=\left(\frac{5}{3}\right)^{n}\mathcal{E}_{n}$
is $2$-periodic under the $\frac{1}{5}$-lazy walk; (b) is a distribution
for which $\widetilde{\mathcal{E}}_{n}$ is stable and non exact (under
the $p$-lazy walk, $p>\frac{1}{5}$).}
\end{figure}

\section{Infinite complexes}

\subsection{\label{sub:Infinite-graphs}Infinite graphs}

We move to the case of infinite complexes, starting with infinite
graphs. Recall that for a finite graph $G=\left(V,E\right)$, we observed
$\Delta^{+}=\Delta^{+}\left(G\right)$, and defined 
\[
\lambda\left(G\right)=\min\Spec\Delta^{+}\big|_{\left(B^{0}\right)^{\bot}}=\min\Spec\Delta^{+}\big|_{Z_{0}}.
\]
In contrast, when $G$ is an infinite graph (i.e.\ $\left|V\right|=\infty$)
one usually restrict his attention to $L^{2}\left(V\right)$ and define
\begin{equation}
\lambda\left(G\right)=\min\Spec\Delta^{+}\big|_{L^{2}\left(V\right)}.\label{eq:gap-infinite-graph}
\end{equation}
Here there is no restriction to $Z_{0}$, nor to $\left(B^{0}\right)^{\bot}$.
These two spaces, which coincide in the finite dimensional case, since
\begin{equation}
Z_{0}=\ker\partial_{0}=\left(\im\partial_{0}^{*}\right)^{\bot}=\left(B^{0}\right)^{\bot},\label{eq:Z0-B0-coincide}
\end{equation}
fail to do so in the infinite settings. First, $Z_{0}$ is not even
defined, as $\left(\partial_{0}f\right)\left(\varnothing\right)=\sum_{v\in V}f\left(v\right)$
has no meaning for general $f\in L^{2}\left(V\right)$. One can observe
$B^{0}=\im\delta_{0}$, taking (\ref{eq:coboundary-operator}) as
the definition of $\delta_{0}$ (as $\partial_{0}$ is not defined).
With this definition, $B^{0}$ consists of the scalar multiples of
the degree function. Since these are never in $L^{2}\left(V\right)$
(assuming as always that there are no isolated vertices), we have
$B^{0}=0$ and $\left(B^{0}\right)^{\bot}=L^{2}\left(V\right)$, justifying
(\ref{eq:gap-infinite-graph}). Another thing which fails here is
the chain complex property $\partial_{0}\partial_{1}=0$: there may
exist $f\in\Omega^{1}\left(G\right)$ such that $\partial_{0}\partial_{1}f$
is defined and nonzero. For example, take $V=\mathbb{Z}$, $E=\left\{ \left\{ i,i\!+\!1\right\} \,\middle|\, i\in\mathbb{Z}\right\} $,
and $f\left(\left[i,i\!+\!1\right]\right)=\begin{cases}
0 & i<0\\
1 & 0\leq i
\end{cases}$. Here $\partial_{1}f=\mathbbm{1}_{0}$, and thus $\left(\partial_{0}\partial_{1}f\right)\left(\varnothing\right)=1$.
If $G$ is transient, e.g.\ the $\mathbb{Z}^{3}$ graph, or a $k$-regular
tree with $k\geq3$, there are even such $f$ in $L^{2}$ - see §\ref{sub:Amenability-transiency}.

\subsection{\label{sub:General-dimension}Infinite complexes of general dimension }

For a complex $X$ of dimension $d$, and $-1\leq k\leq d$, we denote
\[
\Omega_{L^{2}}^{k}=\Omega_{L^{2}}^{k}\left(X\right)=\left\{ f\in\Omega^{k}\left(X\right)\,\middle|\,\left\Vert f\right\Vert ^{2}<\infty\right\} \subseteq\Omega^{k}\left(X\right),
\]
where we recall that 
\[
\left\Vert f\right\Vert ^{2}=\sum_{\sigma\in X^{k}}w\left(\sigma\right)f\left(\sigma\right)^{2}=\begin{cases}
\sum_{\sigma\in X^{k}}f\left(\sigma\right)^{2} & k\neq d-1\\
\sum_{\sigma\in X^{k}}\frac{f\left(\sigma\right)^{2}}{\deg\sigma} & k=d-1
\end{cases}.
\]

Whenever referring to infinite complexes, the domain of all operators
(i.e.\ $\partial,\delta,\Delta^{+},\Delta^{-},\Delta$) is assumed
to be $\Omega_{L^{2}}^{k}$, unless explicitly stated that we are
interested in $\Omega^{k}$. 

Let us examine these operators. We shall always assume that the $\left(d-1\right)$-cells
in $X$ have globally bounded degrees, which ensures that the boundary
and coboundary operators $\partial_{d}:\Omega^{d}\rightarrow\Omega^{d-1}$,
$\delta_{d}:\Omega^{d-1}\rightarrow\Omega^{d}$ are defined, bounded,
and adjoint to one another, so that $\Delta^{+}=\partial_{d}\delta_{d}=\partial_{d}\partial_{d}^{*}$
is bounded and self-adjoint. We do not assume that the degrees in
other dimensions are bounded, as this would rule out infinite graphs,
for example. This means that in general $\delta_{k}$ does not take
$\Omega_{L^{2}}^{k-1}$ into $\Omega_{L^{2}}^{k}$ but only to $\Omega^{k}$,
and $\partial_{k}$ need not even be defined. In particular, one cannot
always define $\Delta^{-}$.

The cochain property $\delta_{k}\delta_{k-1}=0$ always holds, whereas
in general $\partial_{k-1}\partial_{k}\left(f\right)$ can be defined
and nonzero for some $f\in\Omega_{L^{2}}^{k}$. If the degrees of
$\left(k-1\right)$-cells are bounded, then $\delta_{k}$ and $\partial_{k}$
are bounded and $\delta_{k}=\partial_{k}^{*}$. Thus, if the degrees
of $\left(k-1\right)$-cells and $\left(k-2\right)$-cells are globally
bounded one has $\partial_{k-1}\partial_{k}=\left(\delta_{k}\delta_{k-1}\right)^{*}=0^{*}=0$
as well.

In contrast with infinite graphs, an infinite $d$-complex may have
$\left(d-2\right)$-cells of finite degree, so that the image of $\delta_{d-1}$
may contain $L^{2}$-coboundaries. For example, if $v$ is a vertex
of finite degree in an infinite triangle complex, then the ``star''
$\delta_{1}\mathbbm{1}_{v}$ is an $L^{2}$-coboundary. We denote
by $B^{d-1}$ the $L^{2}$-coboundaries, i.e.\ $B^{d-1}=\im\delta_{d-1}\cap\Omega_{L^{2}}^{d-1}$.
In order to avoid trivial zeros in the spectrum of $\Delta^{+}$,
we define $Z_{d-1}=\left(B^{d-1}\right)^{\bot}$ (the orthogonal complement
in $\Omega_{L^{2}}^{d-1}$), and 
\[
\lambda\left(X\right)=\min\Spec\Delta^{+}\big|_{Z_{d-1}}.
\]
We stress out that $Z_{d-1}$ is not necessarily the kernel of $\partial_{d-1}$
(which is not even defined in general). If the $\left(d-2\right)$-degrees
are globally bounded then $\partial_{d-1}$ is defined and dual to
$\delta_{d-1}$, and this gives inclusion in one direction:
\begin{equation}
Z_{d-1}=\left(B^{d-1}\right)^{\bot}=\left(\im\delta_{d-1}\right)^{\bot}\subseteq\ker\partial_{d-1}.\label{eq:d-2-finite-cycles}
\end{equation}
For finite complexes there is an equality here (as in (\ref{eq:Z0-B0-coincide}))
due to dimension considerations.

\medskip{}

In infinite graphs we had $B^{0}=0$, $Z_{0}=\Omega_{L^{2}}^{0}=L^{2}\left(V\right)$
and $\lambda=\min\Spec\Delta^{+}\big|_{L^{2}\left(V\right)}$. The
following lemma shows that this happens whenever all $\left(d-2\right)$-cells
are of infinite degree:
\begin{lem}
\label{lem:d-2-of-inf-deg}If $X$ is a $d$-complex whose $\left(d-2\right)$-cells
are all of infinite degree, then $B^{d-1}=0$ and thus $\lambda\left(X\right)=\min\Spec\Delta^{+}$.\end{lem}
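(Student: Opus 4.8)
The plan is to reduce the statement to the single assertion $B^{d-1}=0$. Once that is shown, $Z_{d-1}=(B^{d-1})^{\bot}$ is all of $\Omega_{L^2}^{d-1}$, and hence $\lambda(X)=\min\Spec\Delta^{+}\big|_{Z_{d-1}}=\min\Spec\Delta^{+}$ directly from the definition. So the entire content is that a square-integrable coboundary must vanish. Accordingly I would take an arbitrary $g\in\Omega_{L^2}^{d-2}$ with $f:=\delta_{d-1}g\in\Omega_{L^2}^{d-1}$ and aim to prove $g=0$ (equivalently $f=0$).

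The main computation is a lower bound on $\|f\|^{2}$. Writing the coboundary from (\ref{eq:coboundary-operator}) as $\delta_{d-1}g=\deg(\cdot)\,\widetilde{\delta}g$, where $(\widetilde{\delta}g)(\sigma)=\sum_{i=0}^{d-1}(-1)^{i}g(\sigma\setminus\sigma_{i})$ is the unweighted coboundary, the weight $w(\sigma)=1/\deg\sigma$ cancels one factor of $\deg\sigma$, so that
\[
\|f\|^{2}=\sum_{\sigma\in X^{d-1}}\deg(\sigma)\,(\widetilde{\delta}g)(\sigma)^{2}\ \geq\ \sum_{\sigma\in X^{d-1}}(\widetilde{\delta}g)(\sigma)^{2},
\]
using $\deg(\sigma)\geq1$, which holds by uniformity. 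Now suppose $g(\eta)\neq0$ for some $(d-2)$-cell $\eta$; I would localize at $\eta$. The $(d-1)$-cells containing $\eta$ are exactly the cells $v\eta$ with $v\triangleleft\eta$, of which there are $\deg(\eta)=\infty$. For each such cell exactly one face equals $\eta$, while the other $d-1$ faces are the cells $v(\eta\setminus\eta_{j})$, so that $(\widetilde{\delta}g)(v\eta)=\pm g(\eta)+\sum_{j}\pm g(v(\eta\setminus\eta_{j}))$.

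The crux is to show these correction terms are negligible for most $v$. For each fixed face $\eta\setminus\eta_{j}$ the map $v\mapsto v(\eta\setminus\eta_{j})$ is injective on $\{v\mid v\triangleleft\eta\}$, so its image is an infinite set of distinct $(d-2)$-cells; since $g\in\Omega_{L^2}^{d-2}$ (the weight in dimension $d-2$ is the counting weight), this forces $g(v(\eta\setminus\eta_{j}))\to0$ as $v$ ranges over $v\triangleleft\eta$. As there are only finitely many indices $j$, for all but finitely many $v$ the whole correction has absolute value below $|g(\eta)|/2$, whence $|(\widetilde{\delta}g)(v\eta)|\geq|g(\eta)|/2$. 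Summing the displayed bound over just these infinitely many cells yields $\|f\|^{2}=\infty$, a contradiction. Hence $g\equiv0$, so $f=0$ and $B^{d-1}=0$.

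I expect the essential difficulty to be exactly this control of cancellations. A single generator $\delta_{d-1}\one_{\eta}$ is trivially non-$L^{2}$, its squared norm being $\sum_{v\triangleleft\eta}\deg(v\eta)\geq\deg\eta=\infty$; but a general $L^{2}$-coboundary is an infinite combination of such generators, and one must rule out that cancellation among them produces a nonzero square-summable form. The interplay of the infinite $(d-2)$-degree with the square-summability of $g$ — through the escape to infinity of the auxiliary faces $v(\eta\setminus\eta_{j})$ — is what makes the off-diagonal terms vanish in the limit, and this is the only place the hypothesis enters. It is precisely the higher-dimensional analogue of the fact that the degree function of an infinite graph never lies in $L^{2}$, which is the $d=1$ instance (with $\eta=\varnothing$) of the same argument.
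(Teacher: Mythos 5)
Your proof is correct and establishes the lemma. It is, however, the mirror image of the paper's argument rather than a reproduction of it: both proofs localize at a $\left(d-2\right)$-cell $\eta$ on which the potential $g$ is nonzero and exploit the infinitely many $\left(d-1\right)$-cells $v\eta$ above it, but the two square-summability hypotheses are used in opposite roles. You feed in $g\in\Omega_{L^{2}}^{d-2}$ to make the correction terms $g\left(v\left(\eta\setminus\eta_{j}\right)\right)$ small for all but finitely many $v$ (via the injectivity of $v\mapsto v\left(\eta\setminus\eta_{j}\right)$ for each fixed $j$), conclude that the coboundary is bounded below by $\left|g\left(\eta\right)\right|/2$ on infinitely many distinct cells $v\eta$, and contradict $\delta_{d-1}g\in\Omega_{L^{2}}^{d-1}$. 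The paper instead feeds in $\delta_{d-1}g\in\Omega_{L^{2}}^{d-1}$ to conclude that the coboundary is \emph{small} on infinitely many of the $v\eta$, so that each such cell must carry some face other than $\eta$ on which $\left|g\right|\geq\left|g\left(\eta\right)\right|/\left(2\left(d-1\right)\right)$; since these faces are pairwise distinct $\left(d-2\right)$-cells, this forces $\left\Vert g\right\Vert =\infty$. The payoff of the paper's direction is that it never assumes the potential is square-summable, so it rules out nonzero $L^{2}$-coboundaries arising from arbitrary potentials in $\Omega^{d-2}$, whereas your argument needs $g\in\Omega_{L^{2}}^{d-2}$ from the outset; under the paper's convention that $\delta_{d-1}$ is only applied to $\Omega_{L^{2}}^{d-2}$, your version fully suffices for $B^{d-1}=0$. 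Your reduction of the second assertion to the first, and your closing identification of the $d=1$ case with the non-square-summability of the degree function of an infinite graph, agree with the paper.
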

\begin{proof}
Let $f\in\Omega^{d-2}$ be such that $\delta_{d-1}f\in\Omega_{L^{2}}^{d-1}\backslash\left\{ 0\right\} $.
Choose $\tau\in X_{\pm}^{d-2}$ for which $f\left(\tau\right)>0$,
and let $\left\{ \sigma_{i}\right\} _{i=1}^{\infty}$ be a sequence
of $\left(d-1\right)$-cells containing $\tau$. Since $\sum_{i=1}^{\infty}\left(\delta_{d-1}f\right)^{2}\left(\sigma_{i}\right)\leq\left\Vert \delta_{d-1}f\right\Vert ^{2}<\infty$,
for infinitely many $i$ we have $\left|\left(\delta_{d-1}f\right)\left(\sigma_{i}\right)\right|\leq\frac{f\left(\tau\right)}{2}$.
Since $\tau$ contributes $f\left(\tau\right)$ to $\left(\delta_{d-1}f\right)\left(\sigma_{i}\right)$,
one of the other faces of $\sigma_{i}$ must be of absolute value
at least $\frac{f\left(\tau\right)}{2\left(d-1\right)}$. Since these
faces are all different $\left(d-2\right)$-cells (if $\sigma_{i}\cap\sigma_{j}$
contains $\tau$ and another $\left(d-2\right)$-cell, then $\sigma_{i}=\sigma_{j}$),
we have $\left\Vert f\right\Vert =\infty$.
\end{proof}

\subsection{\label{sub:Example---arboreal}Example - arboreal complexes}
\begin{defn}
We say that a $d$-complex is \emph{arboreal }if it is $\left(d-1\right)$-connected,
and has no simple $d$-loops. That is, there are no non-backtracking
closed chains of $d$-cells, $\sigma_{0},\sigma_{1},\ldots,\sigma_{n}=\sigma_{0}$
s.t.\ $\dim\left(\sigma_{i}\cap\sigma_{i+1}\right)=d-1$ ($\sigma_{i}$
and $\sigma_{i+1}$ are adjacent) and $\sigma_{i}\neq\sigma_{i+2}$
(the chain is non-backtracking).
\end{defn}
For $d=1$, these are simply trees. As in trees, there is a unique
$k$-regular arboreal $d$-complex for every $k\in\mathbb{N}$, and
we denote it by $T_{k}^{d}$. It can be constructed as follows: start
with a $d$-cell, and attach to each of its faces $k-1$ new $d$-cells.
Continue by induction, adding to each face of a $d$-cell in the boundary
$k-1$ new $d$-cells at every step. For example, the $2$-regular
arboreal triangle complex $T_{2}^{2}$ can be thought of as an ideal
triangulation of the hyperbolic plane, depicted in Figure \ref{fig:T_2_2}.

\begin{figure}[h]
\centering{}\includegraphics[clip,scale=0.22]{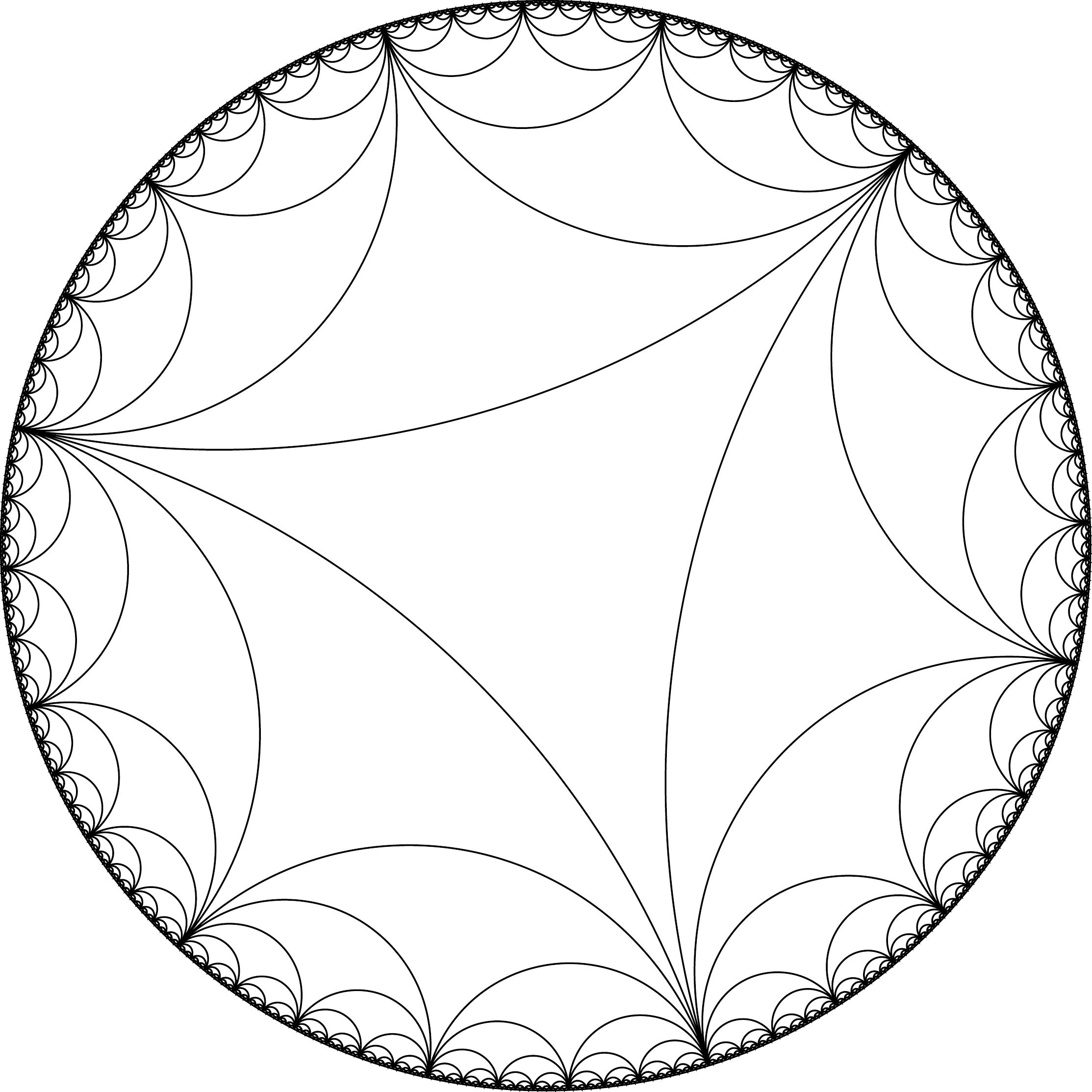}\caption{\label{fig:T_2_2}The $2$-regular arboreal triangle complex $T_{2}^{2}$.}
\end{figure}

The following theorem describes the spectrum of regular arboreal complexes:
\begin{thm}
\label{thm:The-spectrum-of-trees}The spectrum of the non-lazy transition
operator on \textup{the }$k$-regular arboreal $d$-complex\textup{
is} 
\begin{equation}
\begin{aligned}\Spec A\left(T_{k}^{d},0\right) & =\begin{cases}
\left[\frac{1-d-2\sqrt{d(k-1)}}{kd},\frac{1-d+2\sqrt{d(k-1)}}{kd}\right]\cup\left\{ \frac{1}{d}\right\} \vspace{2pt} & 2\leq k\leq d\\
\left[\frac{1-d-2\sqrt{d(k-1)}}{kd},\frac{1-d+2\sqrt{d(k-1)}}{kd}\right] & d<k.
\end{cases}\end{aligned}
\label{eq:tree-spec}
\end{equation}

\end{thm}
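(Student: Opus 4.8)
The plan is to reduce the computation to a spectral problem on a biregular tree. Since the identity of Proposition \ref{Prop:random_Laplacian_connection}(1) is local, it persists for infinite complexes, and at $p=0$ it reads $A=\frac{1}{d}\left(I-\Delta^{+}\right)$; hence it suffices to compute $\Spec\Delta^{+}$ on $\Omega_{L^{2}}^{d-1}(T_{k}^{d})$ and push it forward by the affine map $\lambda\mapsto\frac{1}{d}(1-\lambda)$. Write $\Delta^{+}=\partial_{d}\partial_{d}^{*}$ and consider the bipartite incidence graph $\mathcal{T}$ whose two colour classes are the $d$-cells and the $(d-1)$-cells of $T_{k}^{d}$, with one edge per incidence. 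I claim $\mathcal{T}$ is exactly the $(d+1,k)$-biregular tree: each $d$-cell has $d+1$ faces and, by $k$-regularity, each $(d-1)$-cell lies in $k$ cofaces, while a reduced cycle in $\mathcal{T}$ would spell out a non-backtracking closed chain of $d$-cells, i.e.\ a simple $d$-loop, which is forbidden; here one uses that two distinct $d$-cells share at most one $(d-1)$-face, so that distinct consecutive gates force $\sigma_{i}\neq\sigma_{i+2}$. Because $\mathcal{T}$ is a tree, the orientation signs carried by $\partial_{d},\partial_{d}^{*}$ form a trivial $\mathbb{Z}/2$-cocycle on its edges, so a suitable choice of orientations $X_{+}^{d-1},X_{+}^{d}$ gauges every incidence sign to $+1$ and identifies $\partial_{d}$ with the (nonnegative) incidence operator $N$ of $\mathcal{T}$, yielding $\Delta^{+}=\frac{1}{k}NN^{*}$ once the weight $\frac{1}{k}$ on $\Omega^{d-1}$ is accounted for.

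With this identification, $\Delta^{+}$ is $\frac{1}{k}$ times the $(d-1)$-block of $\mathcal{A}^{2}$, where $\mathcal{A}=\left(\begin{smallmatrix}0&N\\N^{*}&0\end{smallmatrix}\right)$ is the adjacency operator of the biregular tree $\mathcal{T}$. I would then compute $\Spec\mathcal{A}$ by the Kesten-type resolvent recursion: by homogeneity of $\mathcal{T}$ the diagonal Green's function $G(z)=\langle(\mathcal{A}-z)^{-1}e_{x},e_{x}\rangle$ and the two branch functions (one per colour of neighbour) satisfy a self-similar system that collapses to a single quadratic, whose discriminant vanishes precisely at $|\lambda|=\sqrt{d}\pm\sqrt{k-1}$ (with $p-1=d$ and $q-1=k-1$). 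Squaring the resulting band and dividing by $k$ gives the absolutely continuous spectrum of $\Delta^{+}$ as $\frac{1}{k}\left[(\sqrt{d}-\sqrt{k-1})^{2},(\sqrt{d}+\sqrt{k-1})^{2}\right]$, and since $(\sqrt{d}\pm\sqrt{k-1})^{2}=d+k-1\pm2\sqrt{d(k-1)}$, the map $\lambda\mapsto\frac{1}{d}(1-\lambda)$ carries this exactly onto $\left[\frac{1-d-2\sqrt{d(k-1)}}{kd},\frac{1-d+2\sqrt{d(k-1)}}{kd}\right]$, as in (\ref{eq:tree-spec}) (the upper edge of $\Delta^{+}$ becoming the lower edge of $A$). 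This band is in any case the classical spectrum of the biregular tree and may simply be quoted; that the spectrum of the single-cell operator fills the whole $L^{2}$-spectrum follows from transitivity of $\mathrm{Aut}(T_{k}^{d})$ on oriented $(d-1)$-cells together with $(d-1)$-connectedness.

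It remains to detect the isolated point $\frac{1}{d}$, which lies in $\Spec A$ exactly when $0\in\Spec\Delta^{+}=\Spec\partial_{d}\partial_{d}^{*}$, i.e.\ when $\ker\partial_{d}^{*}$ is nontrivial in $L^{2}$. Every $(d-2)$-cell of $T_{k}^{d}$ has infinite degree, so the argument of Lemma \ref{lem:d-2-of-inf-deg} gives $B^{d-1}=0$ and this kernel is the space of $L^{2}$-harmonic $(d-1)$-forms. In the biregular-tree picture, $\ker\Delta^{+}=\ker N^{*}$, and $N^{*}$ maps the $(d-1)$-cell class to the $d$-cell class; the degree identity $(d+1)|X^{d}|=k|X^{d-1}|$ shows that the $(d-1)$-cells are the denser colour class precisely when $(d+1)/k>1$, i.e.\ $k\le d$, and in that case $\ker N^{*}\neq0$ by a von Neumann dimension count. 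This atom is genuinely isolated because the band minimum $\frac{1}{k}(\sqrt{d}-\sqrt{k-1})^{2}$ of $\Delta^{+}$ is strictly positive unless $k=d+1$, a value excluded by $k\le d$; hence $\frac{1}{d}$ is an isolated eigenvalue of $A$ exactly when $k\le d$. When $d<k$ there is no extra eigenvalue: for $k>d+1$ the kernel is trivial and $0\notin\Spec\Delta^{+}$, while for $k=d+1$ one computes $\frac{1-d+2\sqrt{d(k-1)}}{kd}=\frac{1}{d}$, so $\frac{1}{d}$ is merely the top edge of the band. This reproduces both cases of (\ref{eq:tree-spec}).

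The main obstacle is twofold. First, the resolvent recursion must be set up correctly in the presence of two features absent from Kesten's tree — the grouping of $(d-1)$-cells into $d$-simplices (the clique of $k$ cofaces meeting at each face) and the orientation signs; the gauge argument disposes of the signs, but the two neighbour-colours and the weight $\frac{1}{k}$ must be tracked carefully so that the discriminant lands on $\sqrt{d(k-1)}$ rather than a near miss. Second, and more delicate, is the sharp threshold for the atom: one must show not only that $k\le d$ produces the eigenvalue $\frac{1}{d}$ but that $k>d$ removes it from the spectrum entirely, which is cleanest via the von Neumann/$\ell^{2}$-Betti computation $\dim_{\mathrm{vN}}\ker\partial_{d}^{*}\neq0\iff k\le d$, or equivalently through the location of the Green's-function pole relative to the branch cut.
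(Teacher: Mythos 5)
Your argument is correct in outline but genuinely different from the one in the paper. You reduce everything to the $(d+1,k)$-biregular incidence tree $\mathcal{T}$ of $d$-cells versus $(d-1)$-cells: the absence of simple $d$-loops does make $\mathcal{T}$ a tree (your observation that two distinct $d$-cells share at most one $(d-1)$-face is exactly what rules out backtracking-free bipartite cycles), the sign gauge is legitimate precisely because an edge $\pm1$-labelling of a tree is always a vertex coboundary (equivalently, $T_{k}^{d}$ is disorientable), and the bookkeeping $\Delta^{+}=\frac{1}{k}NN^{*}$, followed by $A=\frac{1}{d}\left(I-\Delta^{+}\right)$, does carry $\frac{1}{k}\left(\sqrt{d}\pm\sqrt{k-1}\right)^{2}=\frac{d+k-1\pm2\sqrt{d(k-1)}}{k}$ onto the endpoints $\Lambda_{\mp}$ of (\ref{eq:tree-spec}), and $0\in\Spec\Delta^{+}$ onto the atom $\frac{1}{d}$. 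The paper instead works entirely inside $T_{k}^{d}$: it exhibits the band via radial (``spherical'') approximate eigenforms, exhibits the atom via the explicit $L^{2}$-eigenform $f(n)=d^{-n}$, and excludes everything else by a self-contained Stone's-formula argument (Lemma \ref{lem:tree_lemma}) with explicit radial resolvent solutions $\psi_{\sigma_{0}}^{\lambda},\varphi_{\sigma_{0}}^{\lambda}$ -- machinery that is then reused in \S\ref{sub:Analysis-of-balls}. Your route is shorter if one imports the classical biregular-tree spectrum, and it explains conceptually both the constant $\sqrt{d(k-1)}=\sqrt{(p-1)(q-1)}$ and the threshold $k\leq d$ as a density comparison between the two colour classes. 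Two points deserve care. First, for $k>d+1$ you need not merely $\ker N^{*}=0$ but $0\notin\Spec\left(NN^{*}\big|_{(d-1)\text{-side}}\right)$; this does follow, since $0$ would then be an isolated point of the spectrum of a self-adjoint operator and hence an eigenvalue, but it forces you to prove the \emph{vanishing} of the kernel on the sparser side, which the existence half of the von Neumann dimension count does not give -- you must either quote the full classical statement that the $0$-eigenspace of a biregular tree lives only on the denser side, or compute the Green's function at $0$. Second, for $k\leq d$ the existence of the kernel is most cheaply seen not by a von Neumann argument (which requires setting up a cocompact lattice and a trace) but by the paper's explicit radial form $f(n)=d^{-n}$, whose squared norm grows like $\left(d(k-1)\right)^{n}d^{-2n}$ and is summable exactly when $k-1<d$. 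With those two points settled, your proof is complete.
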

$ $\begin{rems*}$ $ \vspace{-6pt}$ $
\begin{enumerate}
\item For $d=1$ this gives the spectrum of the $k$-regular tree, which
is a famous result of Kesten \cite{MR0109367}: 
\[
\Spec A\left(T_{k}^{1},0\right)=\left[-\frac{2\sqrt{k-1}}{k},\frac{2\sqrt{k-1}}{k}\right].
\]

\item Since for $2\leq k\leq d$ the value $\frac{1}{d}$ is an isolated
value of the spectrum of $T_{k}^{d}$, it follows that it is in fact
an eigenvalue. This is a major difference from the case of graphs,
where the value $\frac{1}{d}=1$ cannot be an eigenvalue for infinite
graphs. This phenomena will play a crucial role in the counterexample
for the Alon-Boppana theorem in general dimension (see §\ref{sub:Alon-Boppana-type-theorems}-\ref{sub:Analysis-of-balls}).
\item Another phenomena which does not occur in the case of graphs, is that
in the region $2\leq k\leq d$ the spectrum expands as $k$ becomes
larger. The spectrum is maximal (as a set) for $k=d+1$, where $\Spec A\left(T_{d+1}^{d},0\right)=\left[-\frac{3d-1}{d\left(d+1\right)},\frac{1}{d}\right]$,
merging with the isolated eigenvalue which appear for smaller $k$.
\item The spectra of the Laplacian $\Delta^{+}=\Delta^{+}\left(T_{k}^{d}\right)$,
and of the $p$-lazy transition operator $A_{p}=A\left(T_{k}^{d},p\right)$,
are obtained from (\ref{eq:tree-spec}) using $\Delta^{+}=I-d\cdot A$
and $ $$A_{p}=p\cdot I+\left(1-p\right)\cdot A$.
\end{enumerate}
\end{rems*}

In order to prove Theorem \ref{thm:The-spectrum-of-trees} we will
need the following lemma, for the idea of which we are indebted to
Jonathan Breuer:
\begin{lem}
\label{lem:tree_lemma}Let $X$ be any set, and $L^{2}(X)$ the Hilbert
space of complex functions of finite $L^{2}$-norm on $X$ (with respect
to the counting measure). Let $A$ be a bounded self adjoint operator
on $L^{2}(X)$, and $a<b\in\mathbb{R}$, such that the following hold:
\begin{enumerate}
\item For every $x\in X$ and $a\leq\lambda\leq b$, there exists $\psi_{x}^{\lambda}\in L^{2}(X)$
such that $\left(A-\lambda I\right)\psi_{x}^{\lambda}=\mathbbm{1}_{x}$.
\item The integral $\int_{a}^{b}c\left(\lambda\right)^{2}d\lambda$ is finite,
where $c\left(\lambda\right)=\sup\limits _{x\in X}\left\Vert \psi_{x}^{\lambda}\right\Vert $
.
\end{enumerate}

Then $\left(a,b\right)\cap\Spec\left(A\right)=\varnothing$.

\end{lem}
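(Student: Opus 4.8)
The plan is to show that for any $\lambda \in (a,b)$ the operator $A - \lambda I$ is invertible, which certifies $\lambda \notin \Spec(A)$. Since $A$ is bounded and self-adjoint, $\lambda \notin \Spec(A)$ is equivalent to the existence of a bounded inverse $(A-\lambda I)^{-1}$, and by the spectral theorem this in turn is equivalent to a lower bound $\|(A-\lambda I)\psi\| \geq c\|\psi\|$ for some $c > 0$ and all $\psi$. Hypothesis~(1) gives us, for each such $\lambda$, solutions $\psi_x^\lambda$ to the equations $(A-\lambda I)\psi_x^\lambda = \one_x$, which says that every Dirac function $\one_x$ lies in the image of $A-\lambda I$; since the $\one_x$ span a dense subspace of $L^2(X)$, this already forces the range of $A-\lambda I$ to be dense. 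The remaining content is to rule out that $\lambda$ sits in the \emph{continuous} spectrum, i.e.\ to promote dense range to a genuine bounded inverse, and this is exactly where hypothesis~(2) must enter.

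The first step is to use the $L^2$-integrability of $c(\lambda)^2$ to deduce that the spectral measure of $A$ has no mass on $(a,b)$. For a fixed $x$, consider the spectral measure $\mu_x$ associated to $\one_x$, so that for any Borel function $g$ one has $\langle g(A)\one_x, \one_x\rangle = \int g\,d\mu_x$. The key computation is that $\|\psi_x^\lambda\|^2 = \|(A-\lambda I)^{-1}\one_x\|^2 = \int \frac{d\mu_x(t)}{(t-\lambda)^2}$, interpreting the resolvent against the spectral decomposition; this is finite precisely because $\psi_x^\lambda \in L^2(X)$ by hypothesis~(1). The heart of the argument is then a Fubini/Tonelli step: I would integrate $\|\psi_x^\lambda\|^2$ over $\lambda \in [a,b]$, obtaining
\[
\int_a^b \|\psi_x^\lambda\|^2\,d\lambda = \int_a^b \int \frac{d\mu_x(t)}{(t-\lambda)^2}\,d\lambda = \int \left(\int_a^b \frac{d\lambda}{(t-\lambda)^2}\right) d\mu_x(t).
\]
For $t \in (a,b)$ the inner integral diverges (the integrand has a nonintegrable singularity at $\lambda = t$), so finiteness of the left-hand side---guaranteed by hypothesis~(2) via $\|\psi_x^\lambda\| \leq c(\lambda)$ and $\int_a^b c(\lambda)^2\,d\lambda < \infty$---forces $\mu_x\big((a,b)\big) = 0$.

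The final step is to assemble this into a statement about $\Spec(A)$. Since $\mu_x\big((a,b)\big) = 0$ for every $x \in X$, and the Dirac functions $\{\one_x\}$ span a dense subspace, the spectral projection $E_A\big((a,b)\big)$ satisfies $\langle E_A\big((a,b)\big)\one_x, \one_x\rangle = 0$ for all $x$, hence $E_A\big((a,b)\big) = 0$; by the general theory of self-adjoint operators this is exactly the assertion $(a,b) \cap \Spec(A) = \varnothing$. The main obstacle I anticipate is making the Fubini interchange and the divergence of $\int_a^b (t-\lambda)^{-2}\,d\lambda$ fully rigorous when $t$ sits at an endpoint or just outside $(a,b)$, and ensuring the resolvent identity $\|\psi_x^\lambda\|^2 = \int (t-\lambda)^{-2}\,d\mu_x(t)$ is justified uniformly in $\lambda$ on the closed interval; once the measure-theoretic bookkeeping is in place, the conclusion is immediate.
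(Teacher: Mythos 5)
Your argument is correct, but it takes a genuinely different route from the paper. The paper invokes Stone's formula, writes $\mathbbm{1}_{x}=\left(A-\lambda\right)\psi_{x}^{\lambda}$ inside the resolvent integral, and uses the functional-calculus bound $\bigl\Vert\bigl((A-\lambda)^{2}+\varepsilon^{2}\bigr)^{-1}(A-\lambda)^{2}\bigr\Vert\leq1$ together with hypothesis (2) to show that the limit as $\varepsilon\downarrow0$ vanishes; you instead integrate the scalar spectral measures directly and let the divergence of $\int_{a}^{b}\left(t-\lambda\right)^{-2}d\lambda$ for $t\in\left[a,b\right]$ do the work via Tonelli. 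Both proofs funnel through the same two facts --- $\mu_{x}\left(\left(a,b\right)\right)=0$ for every $x$, and density of the span of the $\mathbbm{1}_{x}$ --- and both use hypothesis (2) in exactly the same way, but yours avoids Stone's formula and the $\varepsilon$-limit entirely, which makes it shorter and more elementary (it even yields $\mu_{x}\left(\left[a,b\right]\right)=0$, hence the slightly stronger conclusion $\left[a,b\right]\cap\Spec A=\varnothing$, which the paper's proof also obtains at the very end). The one step you should phrase more carefully is the identity $\left\Vert\psi_{x}^{\lambda}\right\Vert^{2}=\int\left(t-\lambda\right)^{-2}d\mu_{x}\left(t\right)$: you cannot write $\psi_{x}^{\lambda}=\left(A-\lambda I\right)^{-1}\mathbbm{1}_{x}$ before knowing the inverse exists. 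Instead, let $\nu$ be the spectral measure of $\psi_{x}^{\lambda}$ itself; from $\left(A-\lambda\right)\psi_{x}^{\lambda}=\mathbbm{1}_{x}$ one gets $d\mu_{x}\left(t\right)=\left(t-\lambda\right)^{2}d\nu\left(t\right)$, whence $\int\left(t-\lambda\right)^{-2}d\mu_{x}\left(t\right)=\nu\left(\mathbb{R}\setminus\left\{\lambda\right\}\right)\leq\left\Vert\psi_{x}^{\lambda}\right\Vert^{2}\leq c\left(\lambda\right)^{2}$, and this inequality is all your Tonelli argument needs. (Incidentally, hypothesis (1) for all $x$ already forces $A-\lambda I$ to be injective --- any $\lambda$-eigenvector would be orthogonal to every $\mathbbm{1}_{x}$ --- so the equality does hold, but it is not required.)
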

\begin{proof}
We show that $\mathbb{P}_{\left[a,b\right]}$, the spectral projection
of $A$ on the interval $\left[a,b\right]$, is zero, and the conclusion
$\left(a,b\right)\cap\Spec\left(A\right)=\varnothing$ follows by
the spectral theorem. Stone's formula states that 
\[
\underset{\varepsilon\downarrow0}{\left(s\right)\mathrm{lim}}\frac{1}{2\pi i}\int_{a}^{b}\left[\left(A-\lambda-i\varepsilon\right)^{-1}-\left(A-\lambda+i\varepsilon\right)^{-1}\right]d\lambda=\mathbb{P}_{\left(a,b\right)}+\frac{1}{2}\mathbb{P}_{\left\{ a,b\right\} }
\]
where $\mathbb{P}_{\left(a,b\right)}$ and $\mathbb{P}_{\left\{ a,b\right\} }$
the spectral projections of $A$ on $\left(a,b\right)$ and $\left\{ a,b\right\} $
respectively, and $\left(s\right)\mathrm{lim}$ denotes a limit in
the strong sense. Denoting $\mathbb{P}=\mathbb{P}_{\left(a,b\right)}+\frac{1}{2}\mathbb{P}_{\left\{ a,b\right\} }$,
this gives for every $x\in X$ 
\[
\lim_{\varepsilon\downarrow0}\frac{1}{2\pi i}\int_{a}^{b}\left\langle \left[\left(A-\lambda-i\varepsilon\right)^{-1}-\left(A-\lambda+i\varepsilon\right)^{-1}\right]\mathbbm{1}_{x},\mathbbm{1}_{x}\right\rangle d\lambda=\left\langle \mathbb{P}\mathbbm{1}_{x},\mathbbm{1}_{x}\right\rangle 
\]
Evaluating the right hand side we get 
\begin{align*}
\left\langle \mathbb{P}\mathbbm{1}_{x},\mathbbm{1}_{x}\right\rangle  & =\lim_{\varepsilon\downarrow0}\frac{1}{2\pi i}\int_{a}^{b}\left\langle \left[\left(A-\lambda-i\varepsilon\right)^{-1}-\left(A-\lambda+i\varepsilon\right)^{-1}\right]\mathbbm{1}_{x},\mathbbm{1}_{x}\right\rangle d\lambda\\
 & =\lim_{\varepsilon\downarrow0}\frac{1}{2\pi i}\int_{a}^{b}\left\langle \left[\left(A-\lambda-i\varepsilon\right)^{-1}-\left(A-\lambda+i\varepsilon\right)^{-1}\right]\left(A-\lambda\right)\psi_{x}^{\lambda},\left(A-\lambda\right)\psi_{x}^{\lambda}\right\rangle d\lambda\\
 & =\lim_{\varepsilon\downarrow0}\frac{1}{2\pi i}\int_{a}^{b}\left\langle \left(A-\lambda+i\varepsilon\right)^{-1}\left[A-\lambda+i\varepsilon-A+\lambda+i\varepsilon\right]\left(A-\lambda-i\varepsilon\right)^{-1}\left(A-\lambda\right)^{2}\psi_{x}^{\lambda},\psi_{x}^{\lambda}\right\rangle d\lambda\\
 & =\lim_{\varepsilon\downarrow0}\frac{\varepsilon}{\pi}\int_{a}^{b}\left\langle \left((A-\lambda)^{2}+\varepsilon^{2}\right)^{-1}\left(A-\lambda\right)^{2}\psi_{x}^{\lambda},\psi_{x}^{\lambda}\right\rangle d\lambda\\
 & \leq\lim_{\varepsilon\downarrow0}\frac{\varepsilon}{\pi}\int_{a}^{b}\left\Vert \left((A-\lambda)^{2}+\varepsilon^{2}\right)^{-1}\left(A-\lambda\right)^{2}\right\Vert c\left(\lambda\right)^{2}d\lambda.
\end{align*}
Defining $f_{\varepsilon,\lambda}\left(t\right)=\frac{\left(t-\lambda\right)^{2}}{\left(t-\lambda\right)^{2}+\varepsilon^{2}}$,
we have $\left|f_{\varepsilon,\lambda}\left(t\right)\right|\leq1$
for every $t,\lambda\in\mathbb{R}$ and $\varepsilon>0$, and thus
$\left\Vert f_{\varepsilon,\lambda}\left(A\right)\right\Vert \leq1$.
Therefore, using \emph{(2)}, the last limit above is zero. Consequently,
for any $x,y\in X$ 
\[
\left|\left\langle \mathbb{P}\mathbbm{1}_{x},\mathbbm{1}_{y}\right\rangle \right|=\left|\left\langle \mathbb{P}\mathbbm{1}_{x},\mathbb{P}\mathbbm{1}_{y}\right\rangle \right|\leq\left\langle \mathbb{P}\mathbbm{1}_{x},\mathbb{P}\mathbbm{1}_{x}\right\rangle ^{\frac{1}{2}}\cdot\left\langle \mathbb{P}\mathbbm{1}_{y},\mathbb{P}\mathbbm{1}_{y}\right\rangle ^{\frac{1}{2}}=0.
\]
It follows that for general $f\in L^{2}(X)$ 
\[
\left\langle \mathbb{P}f,f\right\rangle =\left\langle \mathbb{P}\left(\sum_{x\in X}f(x)\mathbbm{1}_{x}\right),\sum_{y\in X}f(y)\mathbbm{1}_{y}\right\rangle =\sum_{x,y\in X}f(v)f(w)\left\langle \mathbb{P}\mathbbm{1}_{x},\mathbbm{1}_{y}\right\rangle =0,
\]
which implies that $\mathbb{P}=0$, hence also $\mathbb{P}_{\left(a,b\right)}$
and $\mathbb{P}_{\left\{ a,b\right\} }$, and therefore also $\mathbb{P}_{\left[a,b\right]}$.
\end{proof}

\begin{proof}[Proof of Theorem \ref{thm:The-spectrum-of-trees}]
Let $X=T_{k}^{d}$, and $\Lambda_{\pm}=\frac{1-d\pm2\sqrt{d(k-1)}}{kd}$.
The proof is separated into two parts. First we prove that every $\Lambda_{-}\leq\lambda\leq\Lambda_{+}$,
and also $\lambda=\frac{1}{d}$ when $k\leq d$, is in the spectrum,
by exhibiting an appropriate eigenform or an approximate one. In the
second part we use Lemma \ref{lem:tree_lemma} to prove that there
are no other points in the spectrum.

Define an orientation $X_{+}^{d-1}$ as follows: choose an arbitrary
$\left(d-1\right)$-cell $\sigma_{0}\in X_{\pm}^{d-1}$ and place
it in $X_{+}^{d-1}$. Then add to $X_{+}^{d-1}$ all the $k\cdot d$
neighbors of $\sigma_{0}$. Next, for every neighbor $\tau$ of the
recently added $k\cdot d$ cells, add $\tau$ to $X_{+}^{d-1}$, unless
$\tau$ or $\overline{\tau}$ is already there. Continue expanding
in this manner, adding at each stage the neighbors of the last ``layer''
which are further away from the starting cell $\sigma_{0}$. Apart
from orientation, this process gives $X_{+}^{d-1}$ a layer structure:
$\left\{ \sigma_{0}\right\} $ is the $0^{\mathrm{th}}$ layer, its
neighbors the $1^{\mathrm{st}}$ layer, and so on. We denote by $S_{n}\left(X,\sigma_{0}\right)$
the $n^{\mathrm{th}}$ layer, and also write $B_{n}\left(X,\sigma_{0}\right)=\bigcup_{k\leq n}S_{k}\left(X,\sigma_{0}\right)$
for the ``$n^{\mathrm{th}}$ ball'' around $\sigma_{0}$. Figure
\ref{fig:The-orientation-T_2_2} demonstrates this for the first four
layers of $T_{2}^{2}$.

\begin{figure}[h]
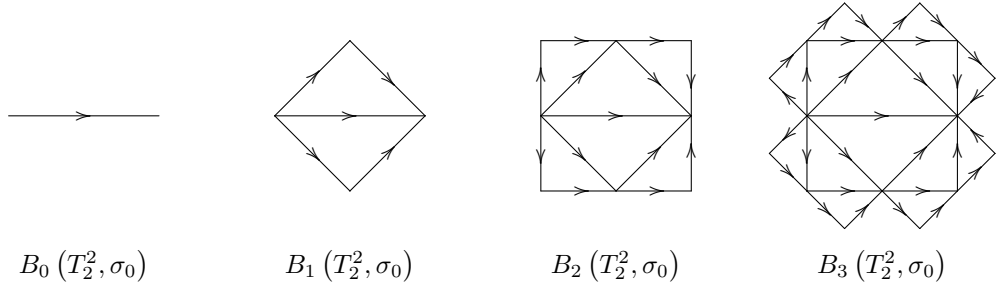

\hfill{}\xy (10,-20)*{B_{0}\left(T_2^2,\sigma_{0}\right)};
(0,0)*{}="00";
(20,0)*{}="01";
(10,10)*{}="10";
(10,-10)*{}="11";
(0,10)*{}="20";
(20,10)*{}="21";
(0,-10)*{}="22";
(20,-10)*{}="23";
(5,15)*{}="30";
(15,15)*{}="31";
(25,5)*{}="32";
(25,-5)*{}="33";
(15,-15)*{}="34";
(5,-15)*{}="35";
(-5,-5)*{}="36";
(-5,5)*{}="37";
"00",{\ar@{-}|(0.54)*=0@{>} "01"};
\endxy\hfill{}\xy (10,-20)*{B_{1}\left(T_2^2,\sigma_{0}\right)};
(0,0)*{}="00";
(20,0)*{}="01";
(10,10)*{}="10";
(10,-10)*{}="11";
(0,10)*{}="20";
(20,10)*{}="21";
(0,-10)*{}="22";
(20,-10)*{}="23";
(5,15)*{}="30";
(15,15)*{}="31";
(25,5)*{}="32";
(25,-5)*{}="33";
(15,-15)*{}="34";
(5,-15)*{}="35";
(-5,-5)*{}="36";
(-5,5)*{}="37";
"00",{\ar@{-}|(0.54)*=0@{>} "01"};
"00",{\ar@{-}|(0.59)*=0@{>} "10"};
"10",{\ar@{-}|(0.59)*=0@{>} "01"};
"01",{\ar@{-}|(0.41)*=0@{<} "11"};
"11",{\ar@{-}|(0.41)*=0@{<} "00"};
\endxy\hfill{}\xy (10,-20)*{B_{2}\left(T_2^2,\sigma_{0}\right)};
(0,0)*{}="00";
(20,0)*{}="01";
(10,10)*{}="10";
(10,-10)*{}="11";
(0,10)*{}="20";
(20,10)*{}="21";
(0,-10)*{}="22";
(20,-10)*{}="23";
(5,15)*{}="30";
(15,15)*{}="31";
(25,5)*{}="32";
(25,-5)*{}="33";
(15,-15)*{}="34";
(5,-15)*{}="35";
(-5,-5)*{}="36";
(-5,5)*{}="37";
"00",{\ar@{-}|(0.54)*=0@{>} "01"};
"00",{\ar@{-}|(0.59)*=0@{>} "10"};
"10",{\ar@{-}|(0.59)*=0@{>} "01"};
"01",{\ar@{-}|(0.41)*=0@{<} "11"};
"11",{\ar@{-}|(0.41)*=0@{<} "00"};
"00",{\ar@{-}|(0.63)*=0@{>} "20"};
"20",{\ar@{-}|(0.63)*=0@{>} "10"};
"11",{\ar@{-}|(0.37)*=0@{<} "22"};
"22",{\ar@{-}|(0.37)*=0@{<} "00"};
"10",{\ar@{-}|(0.63)*=0@{>} "21"};
"21",{\ar@{-}|(0.63)*=0@{>} "01"};
"01",{\ar@{-}|(0.37)*=0@{<} "23"};
"23",{\ar@{-}|(0.37)*=0@{<} "11"};
\endxy\hfill{}\xy (10,-20)*{B_{3}\left(T_2^2,\sigma_{0}\right)};
(0,0)*{}="00";
(20,0)*{}="01";
(10,10)*{}="10";
(10,-10)*{}="11";
(0,10)*{}="20";
(20,10)*{}="21";
(0,-10)*{}="22";
(20,-10)*{}="23";
(5,15)*{}="30";
(15,15)*{}="31";
(25,5)*{}="32";
(25,-5)*{}="33";
(15,-15)*{}="34";
(5,-15)*{}="35";
(-5,-5)*{}="36";
(-5,5)*{}="37";
"00",{\ar@{-}|(0.54)*=0@{>} "01"};
"00",{\ar@{-}|(0.59)*=0@{>} "10"};
"10",{\ar@{-}|(0.59)*=0@{>} "01"};
"01",{\ar@{-}|(0.41)*=0@{<} "11"};
"11",{\ar@{-}|(0.41)*=0@{<} "00"};
"00",{\ar@{-}|(0.63)*=0@{>} "20"};
"20",{\ar@{-}|(0.63)*=0@{>} "10"};
"11",{\ar@{-}|(0.37)*=0@{<} "22"};
"22",{\ar@{-}|(0.37)*=0@{<} "00"};
"10",{\ar@{-}|(0.63)*=0@{>} "21"};
"21",{\ar@{-}|(0.63)*=0@{>} "01"};
"01",{\ar@{-}|(0.37)*=0@{<} "23"};
"23",{\ar@{-}|(0.37)*=0@{<} "11"};
"20",{\ar@{-}|(0.63)*=0@{>} "30"};
"30",{\ar@{-}|(0.63)*=0@{>} "10"};
"10",{\ar@{-}|(0.63)*=0@{>} "31"};
"31",{\ar@{-}|(0.63)*=0@{>} "21"};
"21",{\ar@{-}|(0.63)*=0@{>} "32"};
"32",{\ar@{-}|(0.63)*=0@{>} "01"};
"01",{\ar@{-}|(0.37)*=0@{<} "33"};
"33",{\ar@{-}|(0.37)*=0@{<} "23"};
"23",{\ar@{-}|(0.37)*=0@{<} "34"};
"34",{\ar@{-}|(0.37)*=0@{<} "11"};
"11",{\ar@{-}|(0.37)*=0@{<} "35"};
"35",{\ar@{-}|(0.37)*=0@{<} "22"};
"22",{\ar@{-}|(0.37)*=0@{<} "36"};
"36",{\ar@{-}|(0.37)*=0@{<} "00"};
"00",{\ar@{-}|(0.63)*=0@{>} "37"};
"37",{\ar@{-}|(0.63)*=0@{>} "20"};
\endxy\hfill{}

\caption{\label{fig:The-orientation-T_2_2}The orientation at the zeroth, first,
second, and third layers of $X=T_{2}^{2}$.}
\end{figure}

We shall study $X_{+}^{d-1}$-spherical forms, i.e.\ forms in $\Omega^{d-1}\left(X\right)$
which are constant on each layer of $X_{+}^{d-1}$. For such a form
$f$ we will make some abuse of notation and write $f\left(n\right)$
for the value of $f$ on the cells in the $n^{\mathrm{th}}$ layer
of $X_{+}^{d-1}$. As in regular trees, if one allows forms which
are not in $L^{2}$, then for every $\lambda\in\mathbb{R}$ there
is a unique (up to a constant) $X_{+}^{d-1}$-spherical eigenform
$f$ with eigenvalue $\lambda$. This form is given explicitly by
\[
f(n)=\left(\frac{\lambda-\alpha_{-}}{\alpha_{+}-\alpha_{-}}\right)\cdot\alpha_{+}^{n}+\left(\frac{\alpha_{+}-\lambda}{\alpha_{+}-\alpha_{-}}\right)\cdot\alpha_{-}^{n},
\]
where 
\begin{equation}
\alpha_{\pm}=\frac{d-1+dk\lambda\pm\sqrt{\left(d-1+dk\lambda\right)^{2}-4d(k-1)}}{2d\left(k-1\right)},\label{eq:alpha_pm}
\end{equation}
except for the case $\alpha_{+}=\alpha_{-}$, which happens when $\lambda\in\left\{ \Lambda_{-},\Lambda_{+}\right\} $.
In this case $f$ is given by 
\[
f(n)=\left(1-n\right)\left(\frac{\left(d-1\right)+dk\lambda}{2d\left(k-1\right)}\right)^{n}+\lambda n\left(\frac{\left(d-1\right)+dk\lambda}{2d\left(k-1\right)}\right)^{n-1},
\]
but this will not concern us as the spectrum is closed, and it is
therefore enough to show that $\left(\Lambda_{-},\Lambda_{+}\right)$
is contained in it to deduce this for $\left[\Lambda_{-},\Lambda_{+}\right]$.

The term inside the root in (\ref{eq:alpha_pm}) is negative for $\Lambda_{-}<\lambda<\Lambda_{+},$
hence in this case $\left|\alpha_{+}\right|=\left|\alpha_{-}\right|=\frac{1}{\sqrt{d(k-1)}}.$
We claim the following: for any $\Lambda_{-}<\lambda<\Lambda_{+}$
there exist $0<c_{1}<c_{2}<\infty$ (which depend on $\lambda$) such
that 
\begin{enumerate}
\item For all $n\in\mathbb{N}$,
\begin{equation}
\left|f\left(n\right)\right|\leq c_{2}\left(\frac{1}{\sqrt{d(k-1)}}\right)^{n}.\label{eq:upper_bound_f_n}
\end{equation}

\item For infinitely many $n\in\mathbb{N}$,
\begin{equation}
c_{1}\left(\frac{1}{\sqrt{d(k-1)}}\right)^{n}\leq\left|f\left(n\right)\right|.\label{eq:lower_bound_f_n}
\end{equation}

\end{enumerate}
Indeed, (1) follows from $\left|f\left(n\right)\right|\leq\left[\left|\frac{\lambda-\alpha_{-}}{\alpha_{+}-\alpha_{-}}\right|+\left|\frac{\alpha_{+}-\lambda}{\alpha_{+}-\alpha_{-}}\right|\right]\left(\frac{1}{\sqrt{d(k-1)}}\right)^{n}$
(as $\alpha_{+}\neq\alpha_{-}$ for $\Lambda_{-}<\lambda<\Lambda_{+}$).
Next, denote $\gamma=\frac{\lambda-\alpha_{-}}{\alpha_{+}-\alpha_{-}}$
and observe that
\[
\left|f\left(n\right)\right|\left[d(k-1)\right]^{\frac{n}{2}}=\left|\gamma\alpha_{+}^{n}+\overline{\gamma}\alpha_{-}^{n}\right|\left[d(k-1)\right]^{\frac{n}{2}}=2\Re\left(\gamma\left(\alpha_{+}\sqrt{d\left(k-1\right)}\right)^{n}\right).
\]
If (2) fails, then $\left|f\left(n\right)\right|\left[d(k-1)\right]^{\frac{n}{2}}\overset{{\scriptscriptstyle n\rightarrow\infty}}{\longrightarrow}0$.
Since $\left|\alpha_{+}\sqrt{d\left(k-1\right)}\right|=1$, this means
that $n\arg\alpha_{+}\overset{{\scriptscriptstyle n\rightarrow\infty}}{\longrightarrow}\frac{\pi}{2}-\arg\gamma\:\left(\mathrm{mod}\,\pi\right)$,
hence $\alpha_{+}\in\mathbb{R}$, which is false. 

Even though $f$ is not in $\Omega_{L^{2}}^{d-1}\left(X\right)$
it induces a natural sequence of approximate eigenforms: 
\[
f_{n}\left(\sigma\right)=\begin{cases}
f\left(k\right) & \sigma\in S_{k}\left(X,\sigma_{0}\right)\:\mathrm{and}\: k\leq n\\
-f\left(k\right) & \overline{\sigma}\in S_{k}\left(X,\sigma_{0}\right)\:\mathrm{and}\: k\leq n\\
0 & \mbox{otherwise}.
\end{cases}
\]
To see this, observe that $\left(A_{0}-\lambda\right)f=0$, and that
$f_{n}$ coincides with $f$ on $B_{n}\left(X,\sigma_{0}\right)$
for $k\leq n$ and vanishes on $\left(T_{d}^{k}\right)^{d-1}\backslash B_{n}\left(X,\sigma_{0}\right)$.
It follows that $\left(A_{0}-\lambda\right)f_{n}$ is supported on
$S_{n}\left(X,\sigma_{0}\right)\cup S_{n+1}\left(X,\sigma_{0}\right)$,
and by $\left|S_{n}\left(X,\sigma_{0}\right)\right|=d^{n}k\left(k-1\right)^{n-1}$,
the definition of $A_{0}$, and (\ref{eq:upper_bound_f_n}) 
\begin{align*}
\frac{\left\Vert \left(A_{0}-\lambda\right)f_{n}\right\Vert ^{2}}{\left\Vert f_{n}\right\Vert ^{2}} & =\frac{\left|S_{n}\left(X,\sigma_{0}\right)\right|\left(\frac{1}{dk}\left[f\left(n-1\right)-\left(d-1\right)f\left(n\right)\right]-\lambda f(n)\right)^{2}+\left|S_{n+1}\left(X,\sigma_{0}\right)\right|\left(\frac{1}{dk}f\left(n\right)\right)^{2}}{\sum_{j=0}^{n}\left|S_{j}\left(X,\sigma_{0}\right)\right|f^{2}(j)}\\
 & =\frac{d^{n}k\left(k-1\right){}^{n-1}\cdot\left(-\frac{k-1}{k}f\left(n+1\right)\right)^{2}+d^{n+1}k\left(k-1\right){}^{n}\left(\frac{1}{dk}f\left(n\right)\right)^{2}}{f^{2}\left(0\right)+\sum_{j=1}^{n}d^{j}k\left(k-1\right)^{j-1}f^{2}\left(j\right)}\\
 & =\frac{d^{n}k^{-1}\left(k-1\right){}^{n+1}f\left(n+1\right)^{2}+d^{n-1}k^{-1}\left(k-1\right){}^{n}f\left(n\right)^{2}}{f^{2}\left(0\right)+\sum_{j=1}^{n}d^{j}k\left(k-1\right)^{j-1}f^{2}\left(j\right)}\\
 & \leq\frac{\frac{2c_{2}^{2}}{dk}}{f^{2}\left(0\right)+\frac{k}{k-1}\sum_{j=1}^{n}\left[d\left(k-1\right)\right]^{j}f\left(j\right)^{2}}.
\end{align*}
By (\ref{eq:lower_bound_f_n}), the denominator becomes arbitrarily
large as $n$ grows, and therefore $\frac{\left\Vert \left(A_{0}-\lambda\right)f_{n}\right\Vert ^{2}}{\left\Vert f_{n}\right\Vert ^{2}}\rightarrow0$
and $\lambda\in\Spec A_{0}$.

Turning to the isolated eigenvalues in (\ref{eq:tree-spec}), one
can easily check that $f\left(n\right)=\frac{1}{d^{n}}$ is an eigenform
with eigenvalue $\frac{1}{d}$, and for $2\leq k\leq d$ it is in
$L^{2}$. This concludes the first part of the proof.

\medskip{}

Next assume that $\lambda\in\left(-1,\frac{1}{d}\right)\backslash\left[\Lambda_{-},\Lambda_{+}\right]$.
We show that in this case Lemma \ref{lem:tree_lemma} can be applied.
Let $\sigma_{0}$ and $X_{+}^{d-1}$ be as before, including the layer
structure. Define the following $X_{+}^{d-1}$-spherical forms: 
\begin{equation}
\psi_{\sigma_{0}}^{\lambda}\left(n\right)=\frac{\alpha_{+}^{n}}{\alpha_{+}-\lambda},\qquad\varphi_{\sigma_{0}}^{\lambda}\left(n\right)=\frac{\alpha_{-}^{n}}{\alpha_{-}-\lambda}.\label{eq:psi_phi_lambda}
\end{equation}
The functions $\psi_{\sigma_{0}}^{\lambda}$ is defined whenever
$\lambda\neq\alpha_{+}$, which holds unless $\lambda=\frac{1}{d}$
and $k\leq d+1$ (see (\ref{eq:alpha_pm})). Similarly, $\varphi_{\sigma_{0}}^{\lambda}$
is defined unless $\lambda=-1$, or $\lambda=\frac{1}{d}$ and $k\leq d+1$.
It is straightforward to verify that 
\[
\left(A_{0}-\lambda I\right)\psi_{\sigma_{0}}^{\lambda}=\left(A_{0}-\lambda I\right)\varphi_{\sigma_{0}}^{\lambda}=\mathbbm{1}_{\sigma_{0}}
\]
whenever the functions are defined. For every $X_{+}^{d-1}$-spherical
form $f$ one has 
\begin{equation}
\left\Vert f\right\Vert ^{2}=\sum_{n=0}^{\infty}\left|S_{n}\left(X,\sigma_{0}\right)\right|f^{2}\left(n\right)=f^{2}\left(0\right)+\frac{k}{k-1}\sum_{n=1}^{\infty}\left[\left(k-1\right)d\right]^{n}f^{2}\left(n\right).\label{eq:spherical_norm}
\end{equation}
One can verify that $0<d\left(k-1\right)\alpha_{+}^{2}<1$ holds for
all $\lambda<\Lambda_{-}$, and thus by (\ref{eq:psi_phi_lambda})
and (\ref{eq:spherical_norm}) $\left\Vert \psi_{\sigma_{0}}^{\lambda}\right\Vert $
is finite. In fact, $\left\Vert \psi_{\sigma_{0}}^{\lambda}\right\Vert $
is continuous w.r.t.\ $\lambda$ in this region, so that it is bounded
on every interval $\left[a,b\right]\subseteq\left(-\infty,\Lambda_{-}\right)$.
Furthermore, for any $\sigma\in X^{d-1}$ there is an isometry of
$T_{k}^{d}$ which takes $\sigma_{0}$ to $\sigma$, and thus $\psi_{\sigma_{0}}^{\lambda}$
to a form $\psi_{\sigma}^{\lambda}$ with the same $L^{2}$-norm as
$\psi_{\sigma_{0}}^{\lambda}$, and which satisfies $\left(A_{0}-\lambda I\right)\psi_{\sigma}^{\lambda}=\mathbbm{1}_{\sigma}$.
We can now invoke Lemma \ref{lem:tree_lemma} for $\left[a,b\right]\subseteq\left(-\infty,\Lambda_{-}\right)$,
using $\psi_{\sigma_{0}}^{\lambda}$ and its translations by isometries,
and obtain that $\left(a,b\right)\cap\Spec A_{0}=\varnothing$. Thus,
$\Spec A_{0}$ does not intersect $\left(-\infty,\Lambda_{-}\right)$.

Similarly, $0<d\left(k-1\right)\alpha_{-}^{2}<1$ holds for all $\lambda>\Lambda_{+}$,
so that the same argumentation for $\varphi_{\sigma_{0}}^{\lambda}$
shows that $\Spec A_{0}$ does not intersect $\left(\Lambda_{+},\infty\right)$,
provided that $d+1<k$. When $k\leq d+1$ we know that $\frac{1}{d}\in\Spec A_{0}$,
and we need to show that $\Spec A_{0}$ does not intersect $\left(\Lambda_{+},\infty\right)\backslash\left\{ \frac{1}{d}\right\} $.
This is done in the same manner, observing intervals $\left[a,b\right]\subseteq\left(\Lambda_{+},\frac{1}{d}\right)$
and $\left[a,b\right]\subseteq\left(\frac{1}{d},\infty\right)$ separately.
\end{proof}

\subsection{\label{sub:Continuity-of-the}Continuity of the spectral measure}

In this section we generalize parts of Grigorchuk and \.{Z}uk's work
on graphs \cite{grigorchuk1999asymptotic} to general simplicial complexes.
We assume throughout the section that all $d$-complexes referred
to are $\left(d-1\right)$-connected, and that families and sequences
of $d$-complexes we encounter have globally bounded $\left(d-1\right)$-degrees.

For a uniform $d$-complex $X$ we define the distance between two
$\left(d-1\right)$-cells to be the minimal length of a $\left(d-1\right)$-chain
connecting them:
\[
\dist\left(\sigma,\sigma'\right)=\min\left\{ n\,\middle|\,{\exists\sigma_{0},\sigma_{1},\ldots,\sigma_{n}=\sigma_{0}\in X^{d-1}\;\mathrm{s.t.}\atop \sigma_{i}\cup\sigma_{i+1}\in X^{d}\quad\forall i}\right\} .
\]
We denote by $B_{n}\left(X,\sigma\right)$ the ball of radius $n$
around $ $$\sigma$ in $X$, which is the maximal subcomplex of $X$
all of whose $\left(d-1\right)$-cells are of distance at most $n$
from $\sigma$%
\footnote{this is similar to $B_{n}\left(X,\sigma\right)$ defined in the proof
of theorem \ref{thm:The-spectrum-of-trees}, but there $B_{n}\left(X,\sigma\right)$
referred only to the $\left(d-1\right)$-cells, and here to the entire
subcomplex%
}. A \emph{marked $d$-complex} $\left(X,\sigma\right)$ is a $d$-complex
with a choice of a $\left(d-1\right)$-cell $\sigma$. On the space
of marked $d$-complexes with finite $\left(d-1\right)$-degrees one
can define a metric by 
\[
\dist\left(\left(X_{1},\sigma_{1}\right),\left(X_{2},\sigma_{2}\right)\right)=\inf\left\{ \frac{1}{n+1}\,:\, B_{n}\left(X_{1},\sigma_{1}\right)\,\mbox{is isometric to }B_{n}\left(X_{2},\sigma_{2}\right)\right\} 
\]
\begin{rems*}$ $ \vspace{-6pt}$ $
\begin{enumerate}
\item A limit $\left(X,\sigma\right)$ of a sequence $\left(X_{n},\sigma_{n}\right)$
in this space is unique up to isometry. 
\item For every $K\in\mathbb{N}$, the subspace of $d$-complexes with $\left(d-1\right)$-degrees
bounded by $K$ is compact. This is due to the fact that there is
only a finite number of possibilities for a ball of radius $n$, so
that every sequence has a converging subsequence by a diagonal argument
(see \cite{grigorchuk1999asymptotic} for details).
\end{enumerate}
\end{rems*}

\textbf{}Our next goal is to study the relation of this metric to
the spectra of complexes. We use some standard spectral theoretical
results which we summarize as follows: Let $X$ be a countable set
with a weighted counting measure $w$, i.e., $\int_{X}f=\sum_{x\in X}w\left(x\right)f\left(x\right)$,
and $A$ a self-adjoint operator on $L^{2}\left(X,w\right)$. For
every $x\in X$, the spectral measure $\mu_{x}$ is the unique regular
Borel measure on $\mathbb{C}$ such that for every polynomial $P\left(t\right)\in\mathbb{C}\left[t\right]$
\[
\left\langle P(A)\mathbbm{1}_{x},\mathbbm{1}_{x}\right\rangle =\int_{\mathbb{C}}P(z)d\mu_{x}(z),
\]
where $\mathbbm{1}_{x}$ is the Dirac function of the point $x$.
For $x,y\in X$ the spectral measure $\mu_{x,y}$ is the unique regular
Borel measure on $\mathbb{C}$ such that for every polynomial $P$
\[
\left\langle P(A)\mathbbm{1}_{x},\mathbbm{1}_{y}\right\rangle =\int_{\mathbb{C}}P(z)d\mu_{x,y}(z).
\]
The spectrum of $A$ can be inferred from the spectral measures by
\begin{equation}
\Spec A=\bigcup_{x,y\in X}\supp\mu_{x,y}=\bigcup_{x\in X}\supp\mu_{x}.\label{eq:spectral_stuff}
\end{equation}
We wish to apply this mechanism to the analysis of the action of $A=A\left(X,0\right)=\frac{I-\Delta^{+}}{d}$
on $\Omega_{L^{2}}^{d-1}$ (with the inner product as in (\ref{eq:our-inner-product})),
and this is justified by observing that for any choice of orientation
$X_{+}^{d-1}$ of $X^{d-1}$, we have an isometry $\Omega_{L^{2}}^{d-1}\cong L^{2}\left(X_{+}^{d-1},w\right)$,
where $w\left(\sigma\right)=\frac{1}{\deg\sigma}$. For any $\sigma\in X^{d-1}$
we denote by $\mu_{\sigma}^{X}$ the spectral measure of $A$ w.r.t.\ $\mathbbm{1}_{\sigma}$.
Similarly, $\mu_{\sigma,\sigma'}^{X}$ denotes the spectral measure
of $A$ w.r.t.\ $\mathbbm{1}_{\sigma}$ and $\mathbbm{1}_{\sigma'}$.
\begin{lem}
\label{Lemma-convergence-of-spectral-measures}If $\lim\limits _{n\rightarrow\infty}\left(X_{n},\sigma_{n}\right)=\left(X,\sigma\right)$
then $\mu_{\sigma_{n}}^{X_{n}}$ converges weakly to $\mu_{\sigma}^{X}$.\end{lem}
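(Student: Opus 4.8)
The plan is to reduce weak convergence of the spectral measures to convergence of moments, and then to exploit the locality of the operator $A=A\left(X,0\right)=\frac{I-\Delta^{+}}{d}$. Since $A$ is bounded and self-adjoint with $\Spec A\subseteq\left[-1,\frac{1}{d}\right]$ (Proposition \ref{Prop:random_Laplacian_connection}(2) with $p=0$), every spectral measure $\mu_{\sigma}^{X}$ is a positive measure supported in the \emph{fixed} compact interval $\left[-1,\frac{1}{d}\right]$, independent of $X$ and $\sigma$. For measures supported in a common compact set, weak convergence is equivalent to convergence of all moments $\int z^{m}\,d\mu$, by Weierstrass approximation. Hence it suffices to show that for every $m\geq0$
\[
\int_{\mathbb{R}}z^{m}\,d\mu_{\sigma_{n}}^{X_{n}}(z)=\left\langle A^{m}\one_{\sigma_{n}},\one_{\sigma_{n}}\right\rangle \xrightarrow[n\rightarrow\infty]{}\left\langle A^{m}\one_{\sigma},\one_{\sigma}\right\rangle =\int_{\mathbb{R}}z^{m}\,d\mu_{\sigma}^{X}(z).
\]

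The key observation is that $A$ is a \emph{local} operator. From (\ref{eq:transition-operator}) with $p=0$ one has $\left(Af\right)\left(\tau\right)=\frac{1}{d}\sum_{\tau'\sim\tau}\frac{f\left(\tau'\right)}{\deg\tau'}$, which depends only on the values of $f$ on the neighbors of $\tau$, i.e.\ on $\left(d-1\right)$-cells at distance $1$ from $\tau$. Consequently $\left(A^{m}\one_{\sigma}\right)\left(\tau\right)$ is supported on cells within distance $m$ of $\sigma$, and its value is a signed, degree-weighted count of the length-$m$ neighbor-chains from $\sigma$ to $\tau$, all of which remain inside $B_{m}\left(X,\sigma\right)$. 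In particular the number $\left\langle A^{m}\one_{\sigma},\one_{\sigma}\right\rangle =\frac{1}{\deg\sigma}\left(A^{m}\one_{\sigma}\right)\left(\sigma\right)$ is determined entirely by the combinatorial data of the ball $B_{m}\left(X,\sigma\right)$: its cells, their degrees, and the signed neighbor relation among them. This quantity is intrinsic, i.e.\ independent of the global choice of orientation $X_{+}^{d-1}$ used to realize $\Omega_{L^{2}}^{d-1}\cong L^{2}\left(X_{+}^{d-1},w\right)$, since replacing $\one_{\sigma}$ by $\one_{\overline{\sigma}}=-\one_{\sigma}$ leaves $\left\langle A^{m}\one_{\sigma},\one_{\sigma}\right\rangle$ unchanged.

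With locality established, the proof concludes quickly. Fix $m$. By the definition of the metric on marked complexes, $\left(X_{n},\sigma_{n}\right)\rightarrow\left(X,\sigma\right)$ means that for all $n$ large enough $B_{m}\left(X_{n},\sigma_{n}\right)$ is isometric, as a marked subcomplex, to $B_{m}\left(X,\sigma\right)$. Such an isometry matches cell degrees and the orientations induced on shared faces, hence preserves the signed neighbor relation and therefore the value of the $m$-th moment. Thus $\left\langle A^{m}\one_{\sigma_{n}},\one_{\sigma_{n}}\right\rangle =\left\langle A^{m}\one_{\sigma},\one_{\sigma}\right\rangle$ for all sufficiently large $n$, which gives moment convergence and hence the claimed weak convergence.

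I expect the main obstacle to be the careful justification of locality together with orientation-independence. One must verify that the contribution of each closed neighbor-chain to $\left(A^{m}\one_{\sigma}\right)\left(\sigma\right)$ — a contribution that carries signs arising from the orientations which neighboring $d$-cells induce on their shared $\left(d-1\right)$-faces — is a genuine invariant of the abstract marked ball $B_{m}\left(X,\sigma\right)$, and is unaffected by the arbitrary global choice of $X_{+}^{d-1}$. Once this sign bookkeeping is settled, the reduction to moments and the compactness of the common support make the passage to the limit routine.
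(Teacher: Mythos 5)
Your proposal is correct and follows essentially the same route as the paper: reduce weak convergence to convergence of moments (legitimate here since all the spectral measures live in the fixed compact interval $\left[-1,\frac{1}{d}\right]$), identify the $m$-th moment with the ball-local quantity $\left\langle A^{m}\mathbbm{1}_{\sigma},\mathbbm{1}_{\sigma}\right\rangle=\frac{\mathcal{E}_{m}^{\sigma}\left(\sigma\right)}{\deg\sigma}$, and note that for $n$ large the marked balls $B_{m}\left(X_{n},\sigma_{n}\right)$ and $B_{m}\left(X,\sigma\right)$ are isometric, so the moments eventually agree exactly. The paper phrases the locality step via the $0$-lazy expectation process $\mathcal{E}_{m}^{\sigma}\left(\sigma\right)=\mathbf{p}_{m}^{\sigma}\left(\sigma\right)-\mathbf{p}_{m}^{\sigma}\left(\overline{\sigma}\right)$ rather than your direct path-counting, but this is the same argument.
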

\begin{proof}
For regular finite Borel measures on $\mathbb{R}$ with compact support,
weak convergence follows from convergence of the moments of the measures
(see e.g.\ \cite[§VIII.1]{feller1966introduction}). For $m\geq0$
the $m^{\mathrm{th}}$ moment of $\mu_{\sigma}^{X}$, denoted $\left(\mu_{\sigma}^{X}\right)^{\left(m\right)}$,
is given by 
\[
\left(\mu_{\sigma}^{X}\right)^{\left(m\right)}=\int_{\mathbb{C}}z^{m}d\mu_{\sigma}^{X}(z)=\left\langle A^{m}\mathbbm{1}_{\sigma},\mathbbm{1}_{\sigma}\right\rangle =\left\langle A^{m}\mathcal{E}_{0}^{\sigma},\mathbbm{1}_{\sigma}\right\rangle =\left\langle \mathcal{E}_{m}^{\sigma},\mathbbm{1}_{\sigma}\right\rangle =\frac{\mathcal{E}_{m}^{\sigma}\left(\sigma\right)}{\deg\sigma},
\]
where $\mathcal{E}_{m}^{\sigma}$ is the $0$-lazy expectation process
starting at $\sigma$, at time $m$. However, 
\[
\mathcal{E}_{m}^{\sigma}\left(\sigma\right)=\mathbf{p}_{m}^{\sigma}\left(\sigma\right)-\mathbf{p}_{m}^{\sigma}\left(\overline{\sigma}\right)
\]
is determined by the structure of the complex in the ball $B_{m}\left(X,\sigma\right)$.
For large enough $n$, $B_{m}\left(X,\sigma\right)$ is isometric
to $B_{m}\left(X_{n},\sigma_{n}\right)$, which implies that $\left(\mu_{\sigma_{n}}^{X_{n}}\right)^{(m)}=\left(\mu_{\sigma}^{X}\right)^{(m)}$. 
\end{proof}

\subsection{\label{sub:Alon-Boppana-type-theorems}Alon-Boppana type theorems}
\begin{defn}
\label{def:convergence}A sequence of $d$-complexes $X_{n}$, whose
$\left(d-1\right)$-degrees are bounded globally, is said to converge
to the complex $X$ (written $X_{n}\overset{{\scriptscriptstyle n\rightarrow\infty}}{\longrightarrow}X$)
if $\left(X_{n},\sigma_{n}\right)$ converges to $\left(X,\sigma\right)$
for some choice of $\sigma_{n}\in X_{n}^{d-1}$ and $\sigma\in X^{d-1}$.
\end{defn}
In particular, if $X$ is an infinite $d$-complex with bounded $\left(d-1\right)$-degrees,
and $\left\{ X_{n}\right\} $ is a sequence of quotients of $X$ whose
injectivity radii approach infinity, then $X_{n}\overset{{\scriptscriptstyle n\rightarrow\infty}}{\longrightarrow}X$. 

\medskip{}
The following is (one form of) the classic Alon-Boppana theorem:
\begin{thm}[Alon-Boppana]
\label{Thm:alon-boppana}Let $G_{n}$ be a sequence of graphs whose
degrees are globally bounded, and $G$ a graph s.t. $G_{n}\overset{{\scriptscriptstyle n\rightarrow\infty}}{\longrightarrow}G$.
Then
\[
\liminf_{n\rightarrow\infty}\lambda\left(G_{n}\right)\leq\lambda\left(G\right).
\]

\end{thm}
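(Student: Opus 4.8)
The plan is to rephrase the statement in terms of the non-lazy transition operator $A=A(G,0)$ and then combine the weak convergence of spectral measures (Lemma \ref{Lemma-convergence-of-spectral-measures}) with a short spectral-projection argument. Since $d=1$ we have $A=I-\Delta^{+}$ acting on $\Omega_{L^{2}}^{0}=L^{2}(V,w)$ with $w(v)=1/\deg v$, so $\Spec A=1-\Spec\Delta^{+}$ and hence $\lambda(G)=1-t(G)$, where $t(G):=\max\Spec\left(A\big|_{Z_{0}}\right)$. The desired inequality $\liminf_{n}\lambda(G_{n})\le\lambda(G)$ is therefore equivalent to $\limsup_{n}t(G_{n})\ge t(G)$. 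I will treat the substantive case in which $G$ is infinite, so that $Z_{0}=L^{2}(V)$ and $t(G)=\max\Spec A(G)$; when $G$ is finite, ball-convergence makes the statement degenerate.

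First I would locate a spectral measure reaching the top of the spectrum and slide the basepoint onto it. Because $t(G)=\max\Spec A(G)$ and, by (\ref{eq:spectral_stuff}), $\Spec A(G)=\overline{\bigcup_{\sigma'}\supp\mu_{\sigma'}^{G}}$, for every $\delta>0$ there is a vertex $\sigma'$ in the component of the marked cell $\sigma$ with $\supp\mu_{\sigma'}^{G}\cap(t(G)-\delta,\,t(G)]\ne\varnothing$. Using $(G_{n},\sigma_{n})\to(G,\sigma)$ (Definition \ref{def:convergence}), for each fixed radius and all large $n$ the corresponding balls are isometric, so $\sigma'$ has a counterpart $\sigma'_{n}\in G_{n}^{0}$ with $(G_{n},\sigma'_{n})\to(G,\sigma')$; Lemma \ref{Lemma-convergence-of-spectral-measures} then gives $\mu_{\sigma'_{n}}^{G_{n}}\to\mu_{\sigma'}^{G}$ weakly.

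Now fix $s\in\supp\mu_{\sigma'}^{G}$ with $t(G)-\delta<s\le t(G)$ and set $U=(s-\delta,\,s+\delta)$, so that $m:=\mu_{\sigma'}^{G}(U)>0$. Weak convergence and the portmanteau inequality for open sets give $\liminf_{n}\mu_{\sigma'_{n}}^{G_{n}}(U)\ge m$. On the other hand, the only ``trivial'' point of $\Spec A(G_{n})$ is $1$, attained on the closed forms $Z^{0}=\ker\Delta^{+}$, which for a connected graph is spanned by the degree function (Proposition \ref{prop:Laplacian-spec}(3)); a direct computation shows that the mass $\mu_{\sigma'_{n}}^{G_{n}}$ assigns to this eigenvalue is $\frac{\left|\left\langle \mathbbm{1}_{\sigma'_{n}},\deg\right\rangle \right|^{2}}{\left\Vert \deg\right\Vert ^{2}}=\frac{1}{2\left|E(G_{n})\right|}$, which tends to $0$. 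Hence for all large $n$ we have $\mu_{\sigma'_{n}}^{G_{n}}(U)>\frac{1}{2\left|E(G_{n})\right|}$, so the residual mass forces a spectral value $\theta_{n}\in U$ of $A(G_{n})$ with $\theta_{n}\ne1$. An eigenvalue different from $1$ has eigenvector orthogonal to $Z^{0}=\operatorname{span}(\deg)=B^{0}$, hence lying in $Z_{0}=(B^{0})^{\perp}$; therefore $\theta_{n}\le t(G_{n})$, giving $t(G_{n})\ge\theta_{n}>s-\delta>t(G)-2\delta$. Letting $n\to\infty$ and then $\delta\to0$ yields $\limsup_{n}t(G_{n})\ge t(G)$, which is the claim.

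The main obstacle is exactly this last point: disentangling the spectral weight sitting near $t(G)$ from the trivial top eigenvalue $1$ carried by every finite $G_{n}$. The remedy is the observation that the weight of the degree eigenvector inside a single Dirac function decays like $1/\left|E(G_{n})\right|$, so the fixed amount of mass $m>0$ surviving near $t(G)$ in the limit must come from genuine (mean-zero) eigenvalues. Apart from this, the only care needed is the bookkeeping that local convergence lets the basepoint be moved within the component of $\sigma$, so that $\mu_{\sigma'}^{G}$ is realized as a weak limit of measures living on the $G_{n}$.
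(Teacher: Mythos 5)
Your proof is correct, but it takes a genuinely different route from the paper's. The paper does not prove Theorem \ref{Thm:alon-boppana} directly: it deduces it from Theorem \ref{thm:high-alon-boppana} (itself built on Theorem \ref{thm:cont-spec} and Lemma \ref{Lemma-convergence-of-spectral-measures}) together with the remark that for an infinite connected graph zero is never an \emph{isolated} point of $\Spec\Delta^{+}\big|_{L^{2}(V)}$ --- an isolated zero would be an eigenvalue whose eigenfunction is a multiple of the degree function, hence not in $L^{2}$ --- so that case (1) or case (2) of that theorem always applies. You bypass this dichotomy: after the same basepoint-moving step and the same appeal to Lemma \ref{Lemma-convergence-of-spectral-measures}, you separate the persistent spectral mass near $\max\Spec A(G)$ from the trivial top eigenvalue of $A(G_{n})$ by the quantitative observation that the degree eigenvector carries only mass $\frac{1}{2\left|E(G_{n})\right|}\rightarrow0$ in $\mu_{\sigma'_{n}}^{G_{n}}$ (your computation of this mass is correct for the weights $w(v)=1/\deg v$). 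This is clean and self-contained, avoids the diagonal extraction used in case (2) of Theorem \ref{thm:high-alon-boppana}, and in fact yields the stronger conclusion $\limsup_{n}\lambda(G_{n})\leq\lambda(G)$. Its graph-specific input (a one-dimensional trivial eigenspace spanned by $\deg$, with vanishing relative weight in a Dirac function) plays exactly the role of the paper's graph-specific input ($\deg\notin L^{2}$), and correctly fails to generalize to $d\geq2$, consistent with Theorem \ref{thm:T_2_2-counterexample}. Two small points you should make explicit: the argument uses the standing connectedness assumption of \S\ref{sub:Continuity-of-the} (it is what makes the $1$-eigenspace of $A(G_{n})$ exactly $\mathrm{span}(\deg)$, forces $\left|E(G_{n})\right|\rightarrow\infty$, and disposes of the finite-$G$ case, where eventually $G_{n}\cong G$); and the case of infinite $G_{n}$ deserves a sentence, though it is easier, since there $Z_{0}=L^{2}(V_{n})$ and the point mass at $1$ is zero.
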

In the literature one encounters many variations on this formulation:
some refer only to quotients of $G$, some only to regular graphs,
and some are quantitative (e.g.\ \cite{nilli1991second}). 

In this section we study the analogue question for complexes of general
dimension. We start with the following:
\begin{thm}
\label{thm:cont-spec}If \textup{$X_{n}\overset{{\scriptscriptstyle n\rightarrow\infty}}{\longrightarrow}X$}
and $\lambda\in\Spec A\left(X,0\right)$, there exist $\lambda_{n}\in\Spec A\left(X_{n},0\right)$
with $\lim\limits _{n\rightarrow\infty}\lambda_{n}=\lambda$. The
same holds for the corresponding Laplacians $\Delta_{X}^{+}$ and
$\Delta_{X_{n}}^{+}$.\end{thm}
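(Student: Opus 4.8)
The plan is to derive the statement from the weak convergence of spectral measures proved in Lemma \ref{Lemma-convergence-of-spectral-measures}, combined with the description of the spectrum through spectral measures in (\ref{eq:spectral_stuff}). First I would use (\ref{eq:spectral_stuff}) to choose a single $\left(d-1\right)$-cell $\tau$ of $X$ with $\lambda\in\supp\mu_{\tau}^{X}$; such a $\tau$ exists precisely because $\lambda\in\Spec A\left(X,0\right)=\bigcup_{\tau}\supp\mu_{\tau}^{X}$.

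The first substantive step is to align the basepoints. By Definition \ref{def:convergence}, the convergence $X_{n}\to X$ only supplies cells $\sigma_{n}$ with $\left(X_{n},\sigma_{n}\right)\to\left(X,\sigma\right)$ for one particular $\sigma$, whereas $\lambda$ sits in the support of the spectral measure based at $\tau$. Since the complexes in this section are assumed $\left(d-1\right)$-connected, $\tau$ lies at some finite distance $r$ from $\sigma$, so that $B_{m}\left(X,\tau\right)\subseteq B_{m+r}\left(X,\sigma\right)$ for every $m$. For $n$ large the isometry $B_{m+r}\left(X_{n},\sigma_{n}\right)\cong B_{m+r}\left(X,\sigma\right)$ carries $\tau$ to a well-defined cell $\tau_{n}\in X_{n}^{d-1}$ with $\dist\left(\tau_{n},\sigma_{n}\right)=r$, and, tracking geodesics that remain inside these balls, it restricts to an isometry $B_{m}\left(X_{n},\tau_{n}\right)\cong B_{m}\left(X,\tau\right)$. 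A diagonal choice of $\tau_{n}$ then gives $\left(X_{n},\tau_{n}\right)\to\left(X,\tau\right)$. This transport of the basepoint is the only delicate point; it is a routine but careful bookkeeping exercise about how ball isometries move cells and preserve distances.

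With the basepoints matched, Lemma \ref{Lemma-convergence-of-spectral-measures} yields $\mu_{\tau_{n}}^{X_{n}}\to\mu_{\tau}^{X}$ weakly. I would then apply the portmanteau theorem to the open interval $U_{\varepsilon}=\left(\lambda-\varepsilon,\lambda+\varepsilon\right)$: lower semicontinuity of measure on open sets gives $\liminf_{n}\mu_{\tau_{n}}^{X_{n}}\left(U_{\varepsilon}\right)\geq\mu_{\tau}^{X}\left(U_{\varepsilon}\right)$, and the right-hand side is strictly positive because $\lambda\in\supp\mu_{\tau}^{X}$. Hence for all large $n$ the measure $\mu_{\tau_{n}}^{X_{n}}$ assigns positive mass to $U_{\varepsilon}$, so $\supp\mu_{\tau_{n}}^{X_{n}}\cap U_{\varepsilon}\neq\varnothing$, and by (\ref{eq:spectral_stuff}) this forces $\Spec A\left(X_{n},0\right)\cap U_{\varepsilon}\neq\varnothing$. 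As each $\Spec A\left(X_{n},0\right)$ is a nonempty compact subset of $\left[-1,\tfrac{1}{d}\right]$, I can take $\lambda_{n}$ to be a closest point of $\Spec A\left(X_{n},0\right)$ to $\lambda$; the above shows $\dist\left(\lambda,\Spec A\left(X_{n},0\right)\right)\to0$, i.e.\ $\lambda_{n}\to\lambda$.

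The assertion for the Laplacians then requires no further work: by the affine relation $\Delta^{+}=I-d\cdot A$ (the remark following Theorem \ref{thm:The-spectrum-of-trees}), the homeomorphism $t\mapsto1-dt$ of $\mathbb{R}$ carries $\Spec A\left(X,0\right)$ onto $\Spec\Delta_{X}^{+}$ and $\Spec A\left(X_{n},0\right)$ onto $\Spec\Delta_{X_{n}}^{+}$, so convergent spectral points of the transition operators transport to convergent spectral points of the Laplacians. I expect the measure-theoretic portion to be entirely routine once the spectral measures are based at matching cells, with the basepoint transport of the second paragraph being the part demanding the most care.
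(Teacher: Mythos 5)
Your proposal follows the paper's proof almost step for step: transport the basepoint along the ball isometries so that $\left(X_{n},\tau_{n}\right)\rightarrow\left(X,\tau\right)$, invoke Lemma \ref{Lemma-convergence-of-spectral-measures} together with lower semicontinuity of the measure of open sets under weak convergence to get $\mu_{\tau_{n}}^{X_{n}}\left(\left(\lambda-\varepsilon,\lambda+\varepsilon\right)\right)>0$ for large $n$, and transfer to the Laplacians via $\Delta^{+}=I-d\cdot A$. The one place you diverge is the very first step: you fix a single cell $\tau$ with $\lambda\in\supp\mu_{\tau}^{X}$, citing the set equality in (\ref{eq:spectral_stuff}). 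Strictly speaking that equality only holds after taking the closure of the union on the right-hand side (for a diagonal operator with eigenvalues $1/n$, the point $0$ lies in the spectrum but in no single $\supp\mu_{x}$), so such a $\tau$ need not exist. The paper sidesteps this by choosing, for each $\varepsilon>0$, a cell $\sigma'$ with $\mu_{\sigma'}^{X}\left(\left(\lambda-\varepsilon,\lambda+\varepsilon\right)\right)>0$ --- which is all the argument ever uses --- and your proof goes through unchanged once $\tau$ is allowed to depend on $\varepsilon$ in the same way, concluding with the same diagonal selection of $\lambda_{n}$.
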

\begin{proof}
Let $\sigma_{n},\sigma$ be as in Definition \ref{def:convergence}.
Since $\lambda\in\Spec A\left(X,0\right)$, for every $\varepsilon>0$
there exists $\sigma'\in X^{d-1}$ such that $\mu_{\sigma'}^{X}\left(\left(\lambda-\varepsilon,\lambda+\varepsilon\right)\right)>0$.
We denote $r=\dist\left(\sigma,\sigma'\right)$, and restrict our
attention to the tail of $\left\{ \left(X_{n},\sigma_{n}\right)\right\} $
in which $B_{r}\left(X_{n},\sigma_{n}\right)$ is isometric to $B_{r}\left(X,\sigma\right)$.
If $\sigma_{n}'$ is the image of $\sigma'$ under such an isometry,
and $d_{n}=\max\left\{ k\,\middle|\, B_{k}\left(X_{n},\sigma_{n}\right)\cong B_{k}\left(X,\sigma\right)\right\} $,
then $B_{d_{n}-r}\left(X_{n},\sigma_{n}'\right)\cong B_{d_{n}-r}\left(X,\sigma'\right)$,
and since $d_{n}-r\rightarrow\infty$ we have $\left(X_{n},\sigma_{n}'\right)\rightarrow\left(X,\sigma'\right)$.
By Lemma \ref{Lemma-convergence-of-spectral-measures}, $\mu_{\sigma'_{n}}^{X_{n}}\left(\left(\lambda-\varepsilon,\lambda+\varepsilon\right)\right)>0$
for large enough $n$ and therefore $\Spec A\left(X_{n},0\right)$
intersects $\left(\lambda-\varepsilon,\lambda+\varepsilon\right)$.
The result for the Laplacians follows from the fact that $\Delta^{+}=I-d\cdot A$.
\end{proof}
In particular this gives:
\begin{cor}
If \textup{$X_{n}\overset{{\scriptscriptstyle n\rightarrow\infty}}{\longrightarrow}X$}
then $\Spec A_{X}\subseteq\overline{\bigcup_{n}\Spec A_{X_{n}}}$.
\end{cor}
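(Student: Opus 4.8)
The plan is to read this off directly from Theorem \ref{thm:cont-spec}, which is precisely the pointwise version of the inclusion. First I would fix an arbitrary $\lambda \in \Spec A_X$ and feed it into Theorem \ref{thm:cont-spec}: since $X_n \overset{{\scriptscriptstyle n\rightarrow\infty}}{\longrightarrow} X$, the theorem produces a sequence $\lambda_n \in \Spec A_{X_n}$ with $\lim_{n\to\infty}\lambda_n = \lambda$.

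The second step is the purely topological observation that each $\lambda_n$ lies in $\bigcup_m \Spec A_{X_m}$ (it sits in the $n$-th set of the union), so the sequence $\{\lambda_n\}$ is a sequence of points of $\bigcup_m \Spec A_{X_m}$ converging to $\lambda$. By the definition of closure, $\lambda \in \overline{\bigcup_m \Spec A_{X_m}}$. Since $\lambda$ was an arbitrary element of $\Spec A_X$, this establishes the claimed inclusion $\Spec A_X \subseteq \overline{\bigcup_n \Spec A_{X_n}}$.

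I do not expect any genuine obstacle here: the corollary is an immediate repackaging of Theorem \ref{thm:cont-spec}, with all of the analytic content (the convergence of spectral measures via Lemma \ref{Lemma-convergence-of-spectral-measures} and the moment computation through the expectation process) already absorbed into that theorem. The only thing to be mildly careful about is that the approximating eigenvalues $\lambda_n$ are indexed by the \emph{same} index $n$ as the complexes $X_n$, so that they form an honest sequence inside the union rather than merely a collection of points scattered across different sets; this is exactly what Theorem \ref{thm:cont-spec} guarantees, and it is what makes the closure on the right-hand side the correct object.
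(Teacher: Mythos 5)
Your argument is correct and is exactly how the paper treats this corollary: the statement is presented as an immediate consequence of Theorem \ref{thm:cont-spec} (the paper writes ``In particular this gives'' and supplies no separate proof), and your two steps --- invoking the theorem to produce $\lambda_{n}\in\Spec A_{X_{n}}$ with $\lambda_{n}\rightarrow\lambda$, then passing to the closure of the union --- are precisely the intended reading.
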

This is an analogue of \cite[Thm.\ 4.3]{li2004ramanujan}, which is
also regarded sometimes as an Alon-Boppana theorem. In \cite{li2004ramanujan}
the same statement is proved for the Hecke operators acting on $X=\mathcal{B}_{n,F}$,
the Bruhat-Tits building of type $\widetilde{A}_{n}$, and on a sequence
of quotients of $X$ whose injectivity radii approach infinity.

Returning to the formulation of Alon-Boppana with spectral gaps, Theorem
\ref{thm:cont-spec} yields as an immediate result that if $X_{n}\overset{{\scriptscriptstyle n\rightarrow\infty}}{\longrightarrow}X$
then 
\begin{equation}
\liminf_{n\rightarrow\infty}\min\Spec\Delta_{X_{n}}^{+}\leq\min\Spec\Delta_{X}^{+}\leq\lambda\left(X\right).\label{eq:simple-alon-boppana}
\end{equation}
In order to obtain the higher dimensional analogue of the Alon-Boppana
theorem one would like to verify that this holds also when the spectrum
of $\Delta_{X_{n}}^{+}$ is restricted to $Z_{d-1}=\left(B^{d-1}\right)^{\perp}$.
But while this holds for graphs, the situation is more involved in
general dimension. First of all, it does not hold in general:
\begin{thm}
\label{thm:T_2_2-counterexample}Let $T_{2}^{2}$ be the arboreal
2-regular triangle complex (Figure \ref{fig:T_2_2}), and $X_{r}=B_{r}\left(T_{2}^{2},e_{0}\right)$
be the ball of radius $r$ around an edge in it (as in Figure \ref{fig:The-orientation-T_2_2}).
Then $\lim\limits _{r\rightarrow\infty}\lambda\left(X_{r}\right)=\frac{3}{2}-\sqrt{2}$,
while $\lambda\left(T_{2}^{2}\right)=0$.
\end{thm}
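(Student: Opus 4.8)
The plan is to treat the two claims separately, translating everything through the identity $\Delta^{+}=I-dA=I-2A$ (here $d=k=2$), so that $\Spec\Delta^{+}=\{1-2\mu:\mu\in\Spec A(X,0)\}$. Writing $\Lambda_{\pm}=\frac{-1\pm2\sqrt{2}}{4}$ for the endpoints of the continuous band in Theorem~\ref{thm:The-spectrum-of-trees} and recalling its isolated value $\frac1d=\frac12$, this gives $\Spec\Delta^{+}(T_{2}^{2})=\{0\}\cup[\frac{3-2\sqrt2}{2},\frac{3+2\sqrt2}{2}]$, the isolated $0$ coming from $\mu=\frac12$ and the band bottom $\frac{3}{2}-\sqrt2=1-2\Lambda_{+}$ coming from $\mu=\Lambda_{+}$. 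For the first claim I would note that the $(d-2)$-cells of $T_{2}^{2}$ are its vertices, each of infinite degree (a vertex link is $2$-regular and, by arboreality, acyclic, hence a bi-infinite path). Lemma~\ref{lem:d-2-of-inf-deg} then yields $B^{d-1}=0$, so $Z_{d-1}=\Omega^{d-1}_{L^2}$ and $\lambda(T_{2}^{2})=\min\Spec\Delta^{+}=0$.

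For the balls, the first step is to observe that each $X_{r}$ is contractible (a metric ball in the ``tree of triangles'' $T_{2}^{2}$ deformation-retracts onto the corresponding finite subtree of the dual $3$-regular tree), so $\mathcal H^{d-1}(X_{r})\cong H_{d-1}(X_{r})=0$. By the Hodge decomposition (\ref{eq:hodge-decomp}) this makes $Z_{d-1}(X_{r})=B_{d-1}(X_{r})=(Z^{d-1})^{\perp}$, whence $\lambda(X_{r})=\widetilde\lambda(X_{r})$ is the smallest positive eigenvalue of $\Delta^{+}(X_{r})$. Passing to $A$ and noting that $\ker\Delta^{+}=Z^{d-1}$ is exactly the $\mu=\frac12$ eigenspace of $A$, the identity $\lambda(X_{r})=1-2\max\Spec\big(A(X_{r},0)\big|_{Z_{d-1}}\big)$ reduces the claim to showing
\[
\max\Spec\big(A(X_{r},0)\big|_{Z_{d-1}}\big)\xrightarrow[r\to\infty]{}\Lambda_{+}.
\]

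The upper bound $\limsup_r\lambda(X_{r})\le\frac{3}{2}-\sqrt2$ I would obtain cheaply from continuity of the spectrum. Since $X_{r}=B_{r}(T_{2}^{2},e_{0})\to T_{2}^{2}$ in the sense of Definition~\ref{def:convergence} and $\Lambda_{+}\in\Spec A(T_{2}^{2},0)$, Theorem~\ref{thm:cont-spec} produces eigenvalues $\mu_{r}\in\Spec A(X_{r},0)$ with $\mu_{r}\to\Lambda_{+}$. As $\Lambda_{+}<\frac12$, for large $r$ we have $\mu_{r}\neq\frac12$, so any $\mu_{r}$-eigenform is orthogonal to the $\frac12$-eigenspace $Z^{d-1}=\ker\Delta^{+}$, i.e.\ it lies in $(Z^{d-1})^{\perp}=Z_{d-1}$. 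Hence $\max\Spec(A|_{Z_{d-1}})\ge\mu_{r}\to\Lambda_{+}$, which gives the bound.

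The matching lower bound $\liminf_r\lambda(X_{r})\ge\frac{3}{2}-\sqrt2$, equivalently $\max\Spec(A(X_{r})|_{Z_{d-1}})\le\Lambda_{+}+o(1)$, is the step I expect to be the main obstacle. Since every $A$-eigenform with eigenvalue $\neq\frac12$ automatically lies in $Z_{d-1}$, what must be shown is that for every $\varepsilon>0$ and all large $r$, $A(X_{r},0)$ has \emph{no} eigenvalue in the gap $(\Lambda_{+}+\varepsilon,\tfrac12)$. This is precisely the phenomenon driving the counterexample: the isolated $L^{2}$-eigenform $f(n)=2^{-n}$ of $T_{2}^{2}$, which is harmonic and (since $B^{d-1}=0$) non-exact on the infinite complex, becomes \emph{exact} once truncated to the contractible ball $X_{r}$ (where closed ${}={}$ exact), so its near-$\frac12$ spectral weight is absorbed into the eigenvalue $\frac12$ on $Z^{d-1}=B^{d-1}$ and contributes nothing to $A|_{Z_{d-1}}$. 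To establish the gap I would use the automorphism group of $X_{r}$ fixing $e_{0}$ to block-diagonalize $A(X_{r})$ into isotypic components; on the radial (spherical) component $A$ is the finite Jacobi matrix governed by the recursion behind (\ref{eq:alpha_pm}), whose eigenvalues I would solve explicitly with the ball's boundary condition and show that as $r\to\infty$ the only eigenvalue approaching the gap tends to $\frac12$ with an exact eigenform, while the non-radial components remain within $[\Lambda_{-},\Lambda_{+}]$. Carrying out this ball analysis---controlling all isotypic blocks uniformly in $r$ and pinning the near-$\frac12$ eigenvalue to the exact subspace---is the technical heart of the argument.
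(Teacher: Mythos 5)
Your handling of $\lambda\left(T_{2}^{2}\right)=0$ (via Lemma \ref{lem:d-2-of-inf-deg} and the isolated eigenvalue in Theorem \ref{thm:The-spectrum-of-trees}) and of the upper bound $\limsup_{r}\lambda\left(X_{r}\right)\leq\frac{3}{2}-\sqrt{2}$ (via Theorem \ref{thm:cont-spec}, noting the approximating eigenvalues are eventually nonzero and hence live on $Z_{d-1}$) is correct and coincides with the paper's argument. But the lower bound, which you correctly identify as the crux, is left as a plan rather than a proof, and the plan as stated would not go through. What must be shown is that $\Spec\Delta^{+}\left(X_{r}\right)\cap\left(0,\frac{3}{2}-\sqrt{2}\right]=\varnothing$ for \emph{every} $r$ --- a clean statement about each finite ball, not an asymptotic one. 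In particular there is no eigenvalue ``approaching the gap and tending to $\frac{1}{2}$'' (in the $A$-picture): the near-zero spectral weight sits exactly at $0$, on the exact forms, for every finite $r$.

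The specific difficulty with your sketch is that $\mathrm{Sym}\left(X_{r}\right)\cong\mathbb{Z}/2\times\mathbb{Z}/2$ has only four one-dimensional irreducibles, so its isotypic components have dimensions of order $2^{r}$; the block-diagonalization does not by itself reduce the problem to the $O\left(r\right)$-dimensional radial Jacobi matrices, and the ``non-radial'' remainder is not a priori confined to $\left[\Lambda_{-},\Lambda_{+}\right]$ (indeed it contains $0$-eigenforms of $\Delta^{+}$, i.e.\ $\frac{1}{2}$-eigenforms of $A$). The paper's \S\ref{sub:Analysis-of-balls} supplies the two missing ingredients. First, a complete eigenbasis of $\Omega^{1}\left(X_{r}\right)$: beyond the $4r+1$ spherical eigenvalues encoded in the tridiagonal matrices $M_{\pm\pm}^{\left(r\right)}$, the remaining eigenforms are obtained from $\left(++\right)$-spherical eigenforms of the smaller balls $X_{r-j}$, extended by zero onto single copies of the half-balls $X_{r-j}^{\mathfrak{h}}$ inside $X_{r}$; a dimension count ($4r+1+\sum_{j}2^{j+1}\left(r-j\right)=2^{r+2}-3$) shows nothing is missed, so every eigenvalue of $\Delta^{+}\left(X_{r}\right)$ lies in some $\Spec M_{\pm\pm}^{\left(j\right)}$ with $j\leq r$. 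Second, a uniform-in-$r$ exclusion: each $M_{\pm\pm}^{\left(j\right)}$ has no eigenvalue in $\left(0,\frac{3}{2}-\sqrt{2}\right]$, proved by solving the second-order recurrence for the characteristic polynomials $p_{\pm\pm}^{\left[r\right]}\left(\lambda\right)$ and checking signs on that interval. Without both steps the lower bound --- and hence the theorem --- is not established.
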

The proof follows in the next section. Before we delve into this counterexample,
let us exhibit first several cases in which the Alon-Boppana analogue
does hold:
\begin{thm}
\label{thm:high-alon-boppana}If \textup{$X_{n}\overset{{\scriptscriptstyle n\rightarrow\infty}}{\longrightarrow}X$},
and one of the following holds:
\begin{enumerate}
\item Zero is not in $\mathrm{\Spec}\,\Delta_{X}^{+}\big|_{Z_{d-1}}$ (i.e.\ $\lambda\left(X\right)\neq0$),
\item zero is a non-isolated point in $\mathrm{\Spec}\,\Delta_{X}^{+}\big|_{Z_{d-1}}$,
or
\item the $\left(d-1\right)$-skeletons of the complexes $X_{n}$ form a
family of $\left(d-1\right)$-expanders,
\end{enumerate}

then 
\[
\liminf_{n\rightarrow\infty}\lambda\left(X_{n}\right)\leq\lambda\left(X\right).
\]

\end{thm}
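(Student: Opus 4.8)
The plan is to route all three cases through Theorem \ref{thm:cont-spec}, exploiting the orthogonal splitting $\Omega^{d-1}_{L^2}=Z_{d-1}\oplus B^{d-1}$ together with the fact that $\Delta^+$ annihilates $B^{d-1}$ and preserves $Z_{d-1}$. Two observations are used repeatedly: first, every \emph{strictly positive} point of $\Spec\Delta^+$ already lies in $\Spec(\Delta^+|_{Z_{d-1}})$ and therefore dominates $\lambda$ of the complex; second, since $Z_{d-1}$ is a closed $\Delta^+$-invariant subspace, $\lambda(X)=\min\Spec(\Delta^+_X|_{Z_{d-1}})$ is a genuine point of $\Spec\Delta^+_X$. \textbf{Case (1):} here $\lambda(X)>0$, so Theorem \ref{thm:cont-spec} supplies $\mu_n\in\Spec\Delta^+_{X_n}$ with $\mu_n\to\lambda(X)>0$; for large $n$ we have $\mu_n>0$, hence $\mu_n\in\Spec(\Delta^+_{X_n}|_{Z_{d-1}})$ and $\lambda(X_n)\le\mu_n$, giving $\liminf_n\lambda(X_n)\le\lambda(X)$. \textbf{Case (2):} now $\lambda(X)=0$ but there are points $0<\mu^{(k)}\to0$ in $\Spec(\Delta^+_X|_{Z_{d-1}})$; applying Theorem \ref{thm:cont-spec} to each $\mu^{(k)}$ yields, for large $n$, positive $\mu^{(k)}_n\in\Spec\Delta^+_{X_n}$ with $\mu^{(k)}_n\to\mu^{(k)}$, whence $\liminf_n\lambda(X_n)\le\mu^{(k)}$ for every $k$, and letting $k\to\infty$ gives $\liminf_n\lambda(X_n)\le0=\lambda(X)$.

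\textbf{Case (3)} is the crux. If $\lambda(X)>0$ we are back in case (1), so assume $\lambda(X)<\varepsilon$, where $\varepsilon>0$ uniformly bounds below the spectral gaps of the $(d-1)$-skeletons. The idea is to use expansion to \emph{lift the trivial zeros out of the way} for the \emph{full} Laplacian $\Delta=\Delta^++\Delta^-$. Since $\Delta^-=\delta_{d-1}\partial_{d-1}$ shares its nonzero spectrum with the skeleton's $\Delta^+_{d-2}=\partial_{d-1}\delta_{d-1}$, the expander bound reads $\min\Spec(\Delta^-_{X_n}|_{B^{d-1}})\ge\varepsilon$. Because $\Delta^-$ vanishes on $Z_{d-1}$ and $\Delta^+$ on $B^{d-1}$, the operator $\Delta$ respects the splitting $Z_{d-1}\oplus B^{d-1}$ with $\Delta|_{Z_{d-1}}=\Delta^+|_{Z_{d-1}}$ and $\Delta|_{B^{d-1}}=\Delta^-|_{B^{d-1}}$; hence $\min\Spec\Delta_{X_n}=\min\{\lambda(X_n),\,\min\Spec(\Delta^-_{X_n}|_{B^{d-1}})\}$, and the second entry is $\ge\varepsilon$.

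It remains to run the continuity machinery for $\Delta$ rather than $\Delta^+$. The moment computation of Lemma \ref{Lemma-convergence-of-spectral-measures} applies once $\Delta$ is bounded with matrix coefficients $\langle\Delta^m\mathbbm{1}_\sigma,\mathbbm{1}_\sigma\rangle$ determined by a bounded neighborhood of $\sigma$; both hold under the uniform degree bounds accompanying the expander hypothesis, which in particular bound the $(d-2)$-degrees and so make $\Delta^-$ bounded. This gives the $\Delta$-analogue of Theorem \ref{thm:cont-spec}, so $\liminf_n\min\Spec\Delta_{X_n}\le\min\Spec\Delta_X\le\lambda(X)$, the last step because $\Delta|_{Z_{d-1}(X)}=\Delta^+|_{Z_{d-1}(X)}$ (Definition \ref{The-spectral-gap}). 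Passing to a subsequence along which $\min\Spec\Delta_{X_n}\to L\le\lambda(X)<\varepsilon$, we get $\min\Spec\Delta_{X_n}<\varepsilon\le\min\Spec(\Delta^-_{X_n}|_{B^{d-1}})$ for large $n$, which forces the minimum to be attained on $Z_{d-1}$, i.e.\ $\lambda(X_n)=\min\Spec\Delta_{X_n}\to L$; therefore $\liminf_n\lambda(X_n)\le\lambda(X)$.

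The main obstacle is entirely in case (3): cases (1) and (2) are bookkeeping on top of Theorem \ref{thm:cont-spec}. The delicate points are (i) matching the skeleton's gap with $\min\Spec(\Delta^-|_{B^{d-1}})$ through the shared nonzero spectrum of $\Delta^-_{d-1}$ and $\Delta^+_{d-2}$, thereby confirming that expansion lifts the \emph{whole} trivial-zero subspace $B^{d-1}$ above $\varepsilon$ for the full Laplacian; and (ii) verifying that the spectral-measure/moment argument survives the passage from the tame $\Delta^+$ to $\Delta^-$, which is precisely where boundedness of the lower-dimensional degrees (supplied by the expander assumption) is indispensable.
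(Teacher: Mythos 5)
Your cases (1) and (2) are correct and coincide with the paper's argument. Case (3) is where you diverge, and where there is a genuine gap. Your plan is to run the spectral-measure continuity (Lemma \ref{Lemma-convergence-of-spectral-measures}, hence Theorem \ref{thm:cont-spec}) for the full Laplacian $\Delta=\Delta^{+}+\Delta^{-}$, and then use the expander bound to lift $\Spec\left(\Delta^{-}\big|_{B^{d-1}\left(X_{n}\right)}\right)$ above $\varepsilon$. The second half is fine --- it is exactly the computation $\min\Spec\left(\Delta^{-}\big|_{B^{d-1}}\right)\geq\lambda_{d-2}\left(X_{n}\right)$ that the paper also performs --- but the first half is not justified: the moments $\left\langle \Delta^{m}\mathbbm{1}_{\sigma},\mathbbm{1}_{\sigma}\right\rangle $ are \emph{not} determined by the balls $B_{r}\left(X,\sigma\right)$ that define the convergence $X_{n}\rightarrow X$. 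Those balls are taken in the upper-adjacency metric (chains of $\left(d-1\right)$-cells successively sharing a $d$-cell), whereas $\Delta^{-}$ couples $\left(d-1\right)$-cells that merely share a $\left(d-2\right)$-face, and two such cells can sit at arbitrarily large upper-distance. Already for $d=2$ the quantity $\left\Vert \Delta^{-}\mathbbm{1}_{\sigma}\right\Vert ^{2}$, which enters the second moment, sees every edge through the two endpoints of $\sigma$; one can attach at a vertex of $\sigma_{n}$ extra triangles lying at upper-distance greater than $r\left(n\right)$ without disturbing the isometry $B_{r\left(n\right)}\left(X_{n},\sigma_{n}\right)\cong B_{r\left(n\right)}\left(X,\sigma\right)$, and then the second moments of $\mu_{\sigma_{n}}^{\Delta}$ do not converge to those of $\mu_{\sigma}^{\Delta}$. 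So the claimed $\Delta$-analogue of Theorem \ref{thm:cont-spec} is false in general under Definition \ref{def:convergence}, and bounded $\left(d-2\right)$-degrees (which only make $\Delta^{-}$ bounded) do not repair this.

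The paper avoids this by never invoking spectral-measure continuity for $\Delta$. It reduces, via your cases (1) and (2), to the situation where $0$ is an isolated eigenvalue of $\Delta_{X}^{+}\big|_{Z_{d-1}}$ with an $L^{2}$ eigenform $f$, transplants $f$ to $X_{n}$ by cutting it off outside $B_{r\left(n\right)}\left(X_{n},\sigma_{n}\right)$, and proves directly that $\left\Vert \Delta^{+}f_{n}\right\Vert$ and $\left\Vert \Delta^{-}f_{n}\right\Vert$ tend to zero; for these one-sided estimates on a fixed, compactly supported test form the tail of $\left\Vert f\right\Vert $ controls the error, and no convergence of moments is needed. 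The expander hypothesis then enters exactly where you put it: to rule out that the $B^{d-1}\left(X_{n}\right)$-component of $f_{n}$ carries all the mass. Your splitting of $\Delta$ over $Z_{d-1}\oplus B^{d-1}$ is a clean way to organize this, but to make your route work you would have to strengthen the notion of convergence (e.g.\ require the ball isometries to preserve lower adjacency as well, or assume links of uniformly bounded diameter) so that the moments of $\mu_{\sigma}^{\Delta}$ become locally determined.
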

\begin{proof}
By Theorem \ref{thm:cont-spec} there exist $\lambda_{n}\in\Spec\Delta_{X_{n}}^{+}$
with $\lambda_{n}\rightarrow\lambda\left(X\right)$. If \emph{(1)}
holds, then $\lambda_{n}>0$ for large enough $n$, which implies
that $\lambda_{n}\in\Spec\Delta_{X_{n}}^{+}\big|_{Z_{d-1}}$, hence
$\lambda\left(X_{n}\right)=\min\Spec\Delta_{X_{n}}^{+}\big|_{Z_{d-1}}\leq\lambda_{n}$.
Thus, $\liminf_{n\rightarrow\infty}\lambda\left(X_{n}\right)\leq\liminf_{n\rightarrow\infty}\lambda_{n}=\lambda\left(X\right)$.
If \emph{(2)} holds then there are $\mu_{n}\in\Spec\Delta_{X}^{+}\backslash\left\{ 0\right\} $
with $\mu_{n}\rightarrow\lambda\left(X\right)$. For every $\mu_{n}$
there is a sequence $\lambda_{n,m}\in\Spec\Delta_{X_{m}}^{+}\big|_{Z_{d-1}}$
with $\lambda_{n,m}\overset{{\scriptscriptstyle m\rightarrow\infty}}{\longrightarrow}\mu_{n}$,
and $\lambda_{n,n}\rightarrow\lambda\left(X\right)$. 

In \emph{(3)} we mean that the $\left(d-2\right)$-cells in $X_{n}$
have globally bounded degrees, and the $\left(d-2\right)$-dimensional
spectral gaps 
\[
\lambda_{d-2}\left(X_{n}\right)=\min\Spec\Delta_{d-2}^{+}\big|_{Z_{d-2}\left(X_{n}\right)}
\]
are bounded away from zero (see Remark (1) after the proof). For example,
if $X_{n}$ are triangle complexes, this means that their underlying
graphs form a family of expander graphs in the classical sense. By
the previous cases, we can assume that $\lambda\left(X\right)=0$,
and furthermore that zero is an isolated point in $\mathrm{\Spec}\,\Delta_{X}^{+}\big|_{Z_{d-1}}$.
This implies that it is an eigenvalue, so that there exists $0\neq f\in Z_{d-1}\left(X\right)=B^{d-1}\left(X\right)^{\bot}$
with $\Delta_{X}^{+}f=0$.

Since $X_{n}\overset{{\scriptscriptstyle n\rightarrow\infty}}{\longrightarrow}X$
there exist $\sigma_{n}\in X_{n}$, $ $$\sigma_{\infty}\in X$, a
sequence $r\left(n\right)\rightarrow\infty$, and isometries $\psi_{n}:B_{r\left(n\right)}\left(X_{n},\sigma_{n}\right)\overset{\cong}{\longrightarrow}B_{r\left(n\right)}\left(X,\sigma_{\infty}\right)$.
Define $f_{n}\in\Omega_{L^{2}}^{d-1}\left(X_{n}\right)$ by 
\[
f_{n}\left(\tau\right)=\begin{cases}
f\left(\psi_{n}\left(\tau\right)\right) & \dist\left(\tau,\sigma_{n}\right)\leq r\left(n\right)\\
0 & r\left(n\right)<\dist\left(\tau,\sigma_{n}\right).
\end{cases}
\]
We first claim that $\left\Vert \Delta^{+}f_{n}\right\Vert $ and
$\left\Vert \Delta^{-}f_{n}\right\Vert $ converge to zero ($\Delta^{-}=\Delta^{-}\left(X_{n}\right)$
are defined since the $\left(d-2\right)$-degrees are bounded). Since
$f_{n}$ is zero outside $B_{r\left(n\right)}\left(X_{n},\sigma_{n}\right)$
and coincide with $f$ on it, by $\Delta^{+}f=0$ we have 
\begin{align*}
\left\Vert \Delta^{+}f_{n}\right\Vert ^{2} & =\sum_{\sigma\in X_{n}^{d-1}}\left|\Delta^{+}f_{n}\left(\sigma\right)\right|^{2}=\sum_{\sigma:r\left(n\right)\leq\dist\left(\sigma,\sigma_{n}\right)\leq r\left(n\right)+1}\left|\Delta^{+}f_{n}\left(\sigma\right)\right|^{2}\\
 & =\sum_{\sigma:r\left(n\right)\leq\dist\left(\sigma,\sigma_{n}\right)\leq r\left(n\right)+1}\left|f_{n}\left(\sigma\right)-\sum_{\sigma'\sim\sigma}\frac{f_{n}\left(\sigma'\right)}{\deg\sigma'}\right|^{2}.
\end{align*}
Using $\left(\sum_{i=1}^{k}a_{i}\right)^{2}\leq k\sum_{i=1}^{k}a_{i}^{2}$
this gives
\[
\left\Vert \Delta^{+}f_{n}\right\Vert ^{2}\leq\left(dK+1\right)\sum_{\sigma:r\left(n\right)\leq\dist\left(\sigma,\sigma_{n}\right)\leq r\left(n\right)+1}\left[\left|f_{n}\left(\sigma\right)\right|^{2}+\sum_{\sigma'\sim\sigma}\left|f_{n}\left(\sigma'\right)\right|^{2}\right],
\]
where $K$ is a bound on the degree of $\left(d-1\right)$-cells in
$X$ and $X_{n}$. Since every $\left(d-1\right)$-cell has at most
$dK$ neighbors, we have
\begin{align*}
\left\Vert \Delta^{+}f_{n}\right\Vert ^{2} & \leq dK\left(dK+1\right)\sum_{\sigma:r\left(n\right)-1\leq\dist\left(\sigma,\sigma_{n}\right)\leq r\left(n\right)+2}\left|f_{n}\left(\sigma\right)\right|^{2}\\
 & \leq dK\left(dK+1\right)\left\Vert f\big|_{X\backslash B_{X}\left(\sigma,r\left(n\right)-2\right)}\right\Vert ^{2}\overset{{\scriptscriptstyle n\rightarrow\infty}}{\longrightarrow}0.
\end{align*}

The reasoning for $\left\Vert \Delta^{-}f_{n}\right\Vert \rightarrow0$
(see (\ref{eq:LaplacianDown})) is analogous: (\ref{eq:d-2-finite-cycles})
gives $\Delta^{-}f=0$, and the assumptions that $\left(d-2\right)$-degrees
are globally bounded yields similar bounds as done for $\Delta^{+}$.

For every $n$ write $f_{n}=z_{n}+b_{n}$, with $z_{n}\in Z_{d-1}\left(X_{n}\right)$
and $b_{n}\in B^{d-1}\left(X_{n}\right)$. It is enough to show that
$\left\Vert z_{n}\right\Vert $ are bounded away from zero, since
then $\frac{\left\Vert \Delta^{+}z_{n}\right\Vert }{\left\Vert z_{n}\right\Vert }=\frac{\left\Vert \Delta^{+}f_{n}\right\Vert }{\left\Vert z_{n}\right\Vert }\rightarrow0$,
showing that $\lambda\left(X_{n}\right)=\min\Spec\left(\Delta^{+}\big|_{Z_{d-1}\left(X_{n}\right)}\right)$
converge to zero.

Assume therefore that there are arbitrarily small $\left\Vert z_{n}\right\Vert $,
and by passing to a subsequence that $\left\Vert z_{n}\right\Vert \rightarrow0$.
Then $\left\Vert b_{n}\right\Vert \rightarrow\left\Vert f\right\Vert >0$,
giving $\frac{\left\Vert \Delta^{-}b_{n}\right\Vert }{\left\Vert b_{n}\right\Vert }=\frac{\left\Vert \Delta^{-}f_{n}\right\Vert }{\left\Vert b_{n}\right\Vert }\rightarrow0$.
This implies that $\lambda'_{n}=\min\Spec\left(\Delta^{-}\big|_{B^{d-1}\left(X_{n}\right)}\right)$
converge to zero. However, 
\begin{align*}
\lambda'_{n} & =\min\Spec\left(\Delta^{-}\big|_{B^{d-1}\left(X_{n}\right)}\right)=\min\Spec\left(\partial_{d-1}^{*}\partial_{d-1}\big|_{B^{d-1}\left(X_{n}\right)}\right)\\
 & \overset{\star}{=}\min\Spec\left(\partial_{d-1}\partial_{d-1}^{*}\big|_{B_{d-2}\left(X_{n}\right)}\right)=\min\Spec\left(\Delta_{d-2}^{+}\big|_{B_{d-2}\left(X_{n}\right)}\right)\\
 & \geq\min\Spec\left(\Delta_{d-2}^{+}\big|_{Z_{d-2}\left(X_{n}\right)}\right)=\lambda_{d-2}\left(X_{n}\right)
\end{align*}
where $\star$ is due to the fact that $B^{d-1}$ and $B_{d-1}$ are
the orthogonal complements of $\ker\partial_{d-1}$ and $\ker\partial_{d-1}^{*}$
respectively. This is a contradiction, since $\lambda_{d-2}\left(X_{n}\right)$
are bounded away from zero.
\end{proof}
\begin{rems*}$ $ \vspace{-6pt}
\begin{enumerate}
\item If $X^{\left(j\right)}$ denote the $j$-skeleton of a complex $X$,
i.e.\ the subcomplex consisting of all cells of dimension $\leq j$,
then one can look at $\lambda\left(X_{n}^{\left(d-1\right)}\right)$
instead of at $\lambda_{d-2}\left(X_{n}\right)$. Since we have different
weight functions in codimension one, these are not equal. However,
since we assumed that all $\left(d-1\right)$ and $\left(d-2\right)$
degrees are globally bounded (and nonzero), the norms induced by these
choices of weight functions are equivalent, and thus $\lambda\left(X_{n}^{\left(d-1\right)}\right)$
are bounded away from zero iff $\lambda_{d-2}\left(X_{n}\right)$
are.
\item The Alon-Boppana theorem for graphs follows from condition \emph{(2)}
in this Proposition (as done in \cite{grigorchuk1999asymptotic}),
since zero is never an isolated point in the spectrum of the Laplacian
of an infinite connected graph. Otherwise, it would correspond to
an eigenfunction, which is some multiple of the degree function, hence
not in $L^{2}$.
\end{enumerate}
\end{rems*}

\subsection{\label{sub:Analysis-of-balls}Analysis of balls in $T_{2}^{2}$}

In this section we analyze the spectrum of balls in the 2-regular
triangle complex $T_{2}^{2}$, proving in particular that they constitute
a counterexample for the higher-dimensional analogue of Alon-Boppana
(Theorem \ref{thm:T_2_2-counterexample}). We denote here $X_{r}=B_{r}\left(T_{2}^{2},e_{0}\right)$
the ball of radius $r$ around an edge $e_{0}$ in $T_{2}^{2}$: $X_{0}$
is a single edge, $X_1=\raisebox{1.5pt}{\scalebox{0.4}{$\xy (0,0)*{}="00";
(10,0)*{}="01";
(5,5)*{}="10";
(5,-5)*{}="11";
(0,5)*{}="20";
(10,5)*{}="21";
(0,-5)*{}="22";
(10,-5)*{}="23";
"00";"01" **\dir{-};
"00";"10" **\dir{-};
"00";"11" **\dir{-};
"01";"10" **\dir{-};
"01";"11" **\dir{-};
\endxy$}}$, $X_2=\raisebox{1.5pt}{\scalebox{0.4}{$\xy (0,0)*{}="00";
(10,0)*{}="01";
(5,5)*{}="10";
(5,-5)*{}="11";
(0,5)*{}="20";
(10,5)*{}="21";
(0,-5)*{}="22";
(10,-5)*{}="23";
"00";"01" **\dir{-};
"00";"10" **\dir{-};
"00";"11" **\dir{-};
"01";"10" **\dir{-};
"01";"11" **\dir{-};
"00";"20" **\dir{-};
"20";"10" **\dir{-};
"00";"22" **\dir{-};
"22";"11" **\dir{-};
"01";"21" **\dir{-};
"21";"10" **\dir{-};
"01";"23" **\dir{-};
"23";"11" **\dir{-};
\endxy$}}$ , $X_3=\raisebox{1.5pt}{\scalebox{0.4}{$\xy (0,0)*{}="00";
(10,0)*{}="01";
(5,5)*{}="10";
(5,-5)*{}="11";
(0,5)*{}="20";
(10,5)*{}="21";
(0,-5)*{}="22";
(10,-5)*{}="23";
(2.5,7.5)*{}="30";
(7.5,7.5)*{}="31";
(12.5,2.5)*{}="32";
(12.5,-2.5)*{}="33";
(7.5,-7.5)*{}="34";
(2.5,-7.5)*{}="35";
(-2.5,-2.5)*{}="36";
(-2.5,2.5)*{}="37";
"00";"01" **\dir{-};
"00";"10" **\dir{-};
"00";"11" **\dir{-};
"01";"10" **\dir{-};
"01";"11" **\dir{-};
"00";"20" **\dir{-};
"20";"10" **\dir{-};
"00";"22" **\dir{-};
"22";"11" **\dir{-};
"01";"21" **\dir{-};
"21";"10" **\dir{-};
"01";"23" **\dir{-};
"23";"11" **\dir{-};
"20";"30" **\dir{-};
"30";"10" **\dir{-};
"10";"31" **\dir{-};
"31";"21" **\dir{-};
"21";"32" **\dir{-};
"32";"01" **\dir{-};
"01";"33" **\dir{-};
"33";"23" **\dir{-};
"23";"34" **\dir{-};
"34";"11" **\dir{-};
"11";"35" **\dir{-};
"35";"22" **\dir{-};
"22";"36" **\dir{-};
"36";"00" **\dir{-};
"00";"37" **\dir{-};
"37";"20" **\dir{-};
\endxy$}}$, and so on. For $r\geq1$ we define three $r\times r$ matrices denoted
$M_{++}^{\left(r\right)},M_{+-}^{\left(r\right)},M_{--}^{\left(r\right)}$,
and for $r\geq0$ a $\left(r+1\right)\times\left(r+1\right)$ matrix
$M_{-+}^{\left(r\right)}$, as follows:
\begin{gather*}
M_{-+}^{\left(0\right)}=\left(\begin{matrix}1\end{matrix}\right),\qquad M_{++}^{\left(1\right)}=M_{+-}^{\left(1\right)}=M_{--}^{\left(1\right)}=\left(\begin{matrix}0\end{matrix}\right)\\
M_{-+}^{\left(1\right)}=\left(\begin{smallmatrix}1 & -2\\
-1 & 2
\end{smallmatrix}\right),\qquad M_{++}^{\left(2\right)}=M_{+-}^{\left(2\right)}=\left(\begin{smallmatrix}\frac{1}{2} & -1\\
-1 & 2
\end{smallmatrix}\right),\qquad M_{--}^{\left(2\right)}=\left(\begin{smallmatrix}\frac{3}{2} & -1\\
-1 & 2
\end{smallmatrix}\right)
\end{gather*}
\begin{align*}
M_{++}^{\left(r\right)}=M_{+-}^{\left(r\right)} & =\left.\left(\begin{smallmatrix}\frac{1}{2} & -1\\
-\frac{1}{2} & \frac{3}{2} & -1\\
 &  &  & \ddots\\
 &  &  &  & -\frac{1}{2} & \frac{3}{2} & -1\\
 &  &  &  &  & -1 & 2
\end{smallmatrix}\right)\right\} r\\
M_{--}^{\left(r\right)} & =\left.\left(\begin{smallmatrix}\frac{3}{2} & -1\\
-\frac{1}{2} & \frac{3}{2} & -1\\
 &  &  & \ddots\\
 &  &  &  & -\frac{1}{2} & \frac{3}{2} & -1\\
 &  &  &  &  & -1 & 2
\end{smallmatrix}\right)\right\} r\\
M_{-+}^{\left(r\right)} & =\left.\left(\begin{smallmatrix}1 & -2\\
-\frac{1}{2} & \frac{3}{2} & -1\\
 &  &  & \ddots\\
 &  &  &  & -\frac{1}{2} & \frac{3}{2} & -1\\
 &  &  &  &  & -1 & 2
\end{smallmatrix}\right)\right\} r+1
\end{align*}

\begin{thm}
The spectrum of $X_{r}=B_{r}\left(T_{2}^{2}\right)$ is given (including
multiplicities) by 
\[
\Spec\Delta^{+}\left(X_{r}\right)=\Spec M_{++}^{\left(r\right)}\cup\Spec M_{+-}^{\left(r\right)}\cup\Spec M_{--}^{\left(r\right)}\cup\Spec M_{-+}^{\left(r\right)}\cup\bigcup_{j=1}^{r-1}\left[\Spec M_{++}^{\left(j\right)}\right]^{2^{r-j+1}}
\]
where $ $$\left[X\right]^{i}$ means that $X$ is repeated $i$ times.
\end{thm}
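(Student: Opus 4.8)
The plan is to block-diagonalize $\Delta^{+}=\Delta^{+}(X_r)$ by splitting $\Omega^{d-1}(X_r)=\Omega^{1}(X_r)$ into $\Delta^{+}$-invariant subspaces on each of which the operator reduces to a tridiagonal (Jacobi) matrix, and then to match these matrices with the $M$'s. First I would fix the orientation $X_{+}^{d-1}$ and the layer decomposition $X^{d-1}=\bigsqcup_{n\ge 0}S_n$ around $e_0$ exactly as in the proof of Theorem~\ref{thm:The-spectrum-of-trees}, and record the combinatorics of $T_2^2$ that drive everything: an edge at layer $n\ge 1$ lies in a unique parent triangle (meeting one edge at layer $n-1$ and one sibling at layer $n$) and, if $n<r$, a unique child triangle contributing two edges at layer $n+1$; the central edge $e_0$ is exceptional, lying in two child triangles and so having four layer-$1$ neighbours; the branching is binary, so $|S_n|=2^{n+1}$ for $n\ge 1$; and the edges at the outer layer $r$ have degree one. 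Feeding \eqref{eq:LaplacianUp} with these data and with the orientation signs, a form that is constant (in the $X_+^{d-1}$ sense) on each layer of a subtree satisfies the bulk recursion $(\Delta^{+}f)(n)=-\tfrac12 f(n-1)+\tfrac32 f(n)-f(n+1)$, which is precisely the radial recursion underlying Theorem~\ref{thm:The-spectrum-of-trees}; the diagonal $\tfrac32=1+\tfrac12$ records the self-coupling through the sibling.

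The invariant subspaces fall into two families. The first are the globally spherical forms, which carry an action of the group $\mathbb{Z}/2\times\mathbb{Z}/2$ of symmetries of $(X_r,e_0)$ generated by the swap of the two vertices of $e_0$ and the swap of its two triangles; decomposing into the four characters gives four Jacobi blocks. In the character in which $\one_{e_0}$ survives (it is antisymmetric under the vertex swap and symmetric under the triangle swap) the coordinate $n=0$ is retained, giving the $(r+1)\times(r+1)$ block $M_{-+}^{(r)}$, whose top row $(1,-2,\dots)$ comes from the four layer-$1$ neighbours of $e_0$; in the other three characters $f(e_0)=0$, giving $r\times r$ blocks. Whether the sibling at the innermost surviving layer carries the same or the opposite value fixes the top-left entry: the between-triangle-antisymmetric character keeps siblings equal and gives $\tfrac32$, i.e.\ $M_{--}^{(r)}$, while the two within-triangle-antisymmetric characters flip the sibling and give $\tfrac12$, i.e.\ $M_{++}^{(r)}=M_{+-}^{(r)}$ (the two coincide because the two triangles play symmetric roles). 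The second family consists of forms born at an interior parent: for a parent edge $p$ at layer $\ell-1$ with $\ell\ge 2$, one takes the two child-subtrees of $p$ with opposite spherical profiles; this forces $(\Delta^{+}f)=0$ on all layers $\le \ell-1$ and decouples the form from the centre. Such a form is determined by a profile on layers $\ell,\dots,r$, its innermost sibling is flipped (top-left entry $\tfrac12$), and it contributes the $j\times j$ block $M_{++}^{(j)}$ with $j=r-\ell+1$.

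In every block the outer boundary condition is the same: at layer $r$ the edges have degree one, so the sibling term in \eqref{eq:LaplacianUp} is no longer halved and the last diagonal entry becomes $2$, producing the common last row $(\dots,-1,2)$. I would then count multiplicities: a born profile of size $j$ is indexed by its parent $p\in S_{\ell-1}$ with $\ell=r-j+1$, and there are $|S_{\ell-1}|=2^{r-j+1}$ such parents, which is exactly the exponent in the statement, while the four spherical blocks occur once each. Finally I would verify that these subspaces are mutually orthogonal (born functions over distinct parents have disjoint support, and the balanced condition makes them orthogonal to the spherical sectors) and exhaust $\Omega^{1}(X_r)$ by the dimension count $(r+1)+3r+\sum_{j=1}^{r-1}j\,2^{r-j+1}=2^{r+2}-3=|X_r^{1}|$, so that $\Spec\Delta^{+}(X_r)$ is the disjoint union of the block spectra.

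Two points carry the real work. The stated matrices are not literally the operators in the value-per-layer basis: there the child term at a layer whose children have degree one (the four children of $e_0$ when $r=1$, or the penultimate layer in general) comes out as $-2$ rather than $-1$, and the matrix is not symmetric. One must pass to a normalized basis — equivalently, apply an explicit diagonal similarity — to reach the clean entries $\bigl(-\tfrac12,\tfrac32,-1\bigr)$ with boundary row $(-1,2)$; since diagonal conjugation preserves the spectrum this is harmless, and for $r=1$ one checks directly that $M_{-+}^{(1)}$ has spectrum $\{0,3\}$, with $3=d+1$ forced by the disorientability of $X_1$ through Proposition~\ref{prop:Laplacian-spec}. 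The genuinely delicate bookkeeping is the tracking of orientation signs in \eqref{eq:LaplacianUp}: the neighbour relation flips orientation, so a sibling contributes with the opposite sign, and this is exactly what turns the naive diagonal $1$ into $\tfrac32$ (or $\tfrac12$) and what makes the four children of $e_0$ sum to $-2f(1)$ rather than $\pm4f(1)$. Pinning down these signs to prove invariance of each subspace, and that the born-at-$\ell$ condition really annihilates $\Delta^{+}$ on the inner layers, is the main obstacle; the subsequent identification of entries and the multiplicity/dimension bookkeeping are then routine.
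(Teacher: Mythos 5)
Your proposal is correct and follows essentially the same route as the paper: decompose $\Omega^{1}\left(X_{r}\right)$ under the Klein four-group of symmetries of $\left(X_{r},e_{0}\right)$ into four spherical sectors yielding the tridiagonal blocks $M_{\pm\pm}^{\left(r\right)}$ (with only the $V_{-+}$-sector retaining the coordinate at $e_{0}$), adjoin the localized eigenforms born at interior edges --- your ``two child-subtrees with opposite spherical profiles'' are exactly the paper's forms supported on a single copy of $X_{r-j}^{\mathfrak{h}}$ in $X_{r}\backslash\overset{\circ}{X_{j}}$ --- and close with the same multiplicity and dimension count $2^{r+2}-3$. One small inaccuracy: born forms over \emph{nested} parents do not have disjoint supports, so orthogonality (or, as in the paper, linear independence) there must instead be argued from the antisymmetry across the two child-subtrees together with the nonvanishing of a spherical eigenform at its innermost layer; this is a one-line repair and does not affect the argument.
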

To make this clear, this gives 
\[
\left|\Spec\Delta^{+}\left(X_{r}\right)\right|=4r+1+\sum_{j=1}^{r-1}2^{r-j+1}\cdot j=2^{r+2}-3=\left|X_{r}^{1}\right|=\dim\Omega^{1}\left(X_{r}\right),
\]
as ought to be. 
\begin{proof}
The symmetry group of $X_{r}$ (for $r\geq1$) is $G=\left\{ id,\tau_{h},\tau_{v},\sigma\right\} $,
where $\tau_{h}$ is the horizontal reflection, $\tau_{v}$ is the
vertical reflection (around the middle edge $e_{0}$), and $\sigma=\tau_{h}\circ\tau_{v}=\tau_{v}\circ\tau_{h}$
is a rotation by $\pi$. The irreducible representations of $G$ are
given in Table \ref{tab:irreducible-representations}.

\begin{table}[h]
\begin{centering}
\begin{tabular}{|c|c|c|c|c|}
\hline 
 & $e$ & $\tau_{h}$ & $\tau_{v}$ & $\sigma$\tabularnewline
\hline 
\hline 
$V_{++}$ & $1$ & $1$ & $1$ & $1$\tabularnewline
\hline 
$V_{+-}$ & $1$ & $1$ & $-1$ & $-1$\tabularnewline
\hline 
$V_{-+}$ & $1$ & $-1$ & $1$ & $-1$\tabularnewline
\hline 
$V_{--}$ & $1$ & $-1$ & $-1$ & $1$\tabularnewline
\hline 
\end{tabular}
\par\end{centering}

\caption{\label{tab:irreducible-representations}The irreducible representations
of $G=\mathrm{Sym}\left(X_{r}\right)$.}
\end{table}

We define four orientations for $X_{r}$, denoted $X_{r}^{\pm\pm}$,
demonstrated in Figure \ref{fig:The-four-choices}. In all of them
$e_{0}$ is oriented from left to right, and the first (top right)
quadrant is oriented clockwise. Each of the other quadrants is then
oriented according to the corresponding representation, e.g.\ $X_{r}^{+-}$
satisfies the following: for every oriented edge $e$, if $e\in X_{r}^{+-}$
then $\tau_{h}e\in X_{r}^{+-}$, while $\tau_{v}e,\sigma e\notin X_{r}^{+-}$
(so that $\overline{\tau_{v}e},\overline{\sigma e}\in X_{r}^{+-}$). 

\begin{figure}[h]
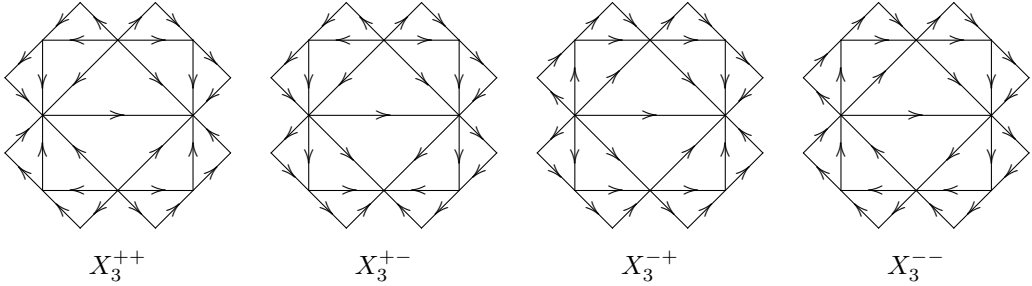

\hfill{}\xy (10,-20)*{X_3^{++}};
(0,0)*{}="00";
(20,0)*{}="01";
(10,10)*{}="10";
(10,-10)*{}="11";
(0,10)*{}="20";
(20,10)*{}="21";
(0,-10)*{}="22";
(20,-10)*{}="23";
(5,15)*{}="30";
(15,15)*{}="31";
(25,5)*{}="32";
(25,-5)*{}="33";
(15,-15)*{}="34";
(5,-15)*{}="35";
(-5,-5)*{}="36";
(-5,5)*{}="37";
"00",{\ar@{-}|(0.54)*=0@{>} "01"};
"00",{\ar@{-}|(0.41)*=0@{<} "10"};
"10",{\ar@{-}|(0.59)*=0@{>} "01"};
"01",{\ar@{-}|(0.41)*=0@{<} "11"};
"11",{\ar@{-}|(0.59)*=0@{>} "00"};
"00",{\ar@{-}|(0.37)*=0@{<} "20"};
"20",{\ar@{-}|(0.37)*=0@{<} "10"};
"11",{\ar@{-}|(0.63)*=0@{>} "22"};
"22",{\ar@{-}|(0.63)*=0@{>} "00"};
"10",{\ar@{-}|(0.63)*=0@{>} "21"};
"21",{\ar@{-}|(0.63)*=0@{>} "01"};
"01",{\ar@{-}|(0.37)*=0@{<} "23"};
"23",{\ar@{-}|(0.37)*=0@{<} "11"};
"00",{\ar@{-}|(0.37)*=0@{<} "37"};
"37",{\ar@{-}|(0.37)*=0@{<} "20"};
"20",{\ar@{-}|(0.37)*=0@{<} "30"};
"30",{\ar@{-}|(0.37)*=0@{<} "10"};
"10",{\ar@{-}|(0.63)*=0@{>} "31"};
"31",{\ar@{-}|(0.63)*=0@{>} "21"};
"21",{\ar@{-}|(0.63)*=0@{>} "32"};
"32",{\ar@{-}|(0.63)*=0@{>} "01"};
"01",{\ar@{-}|(0.37)*=0@{<} "33"};
"33",{\ar@{-}|(0.37)*=0@{<} "23"};
"23",{\ar@{-}|(0.37)*=0@{<} "34"};
"34",{\ar@{-}|(0.37)*=0@{<} "11"};
"11",{\ar@{-}|(0.63)*=0@{>} "35"};
"35",{\ar@{-}|(0.63)*=0@{>} "22"};
"22",{\ar@{-}|(0.63)*=0@{>} "36"};
"36",{\ar@{-}|(0.63)*=0@{>} "00"};
\endxy\hfill{}\xy (10,-20)*{X_3^{+-}};
(0,0)*{}="00";
(20,0)*{}="01";
(10,10)*{}="10";
(10,-10)*{}="11";
(0,10)*{}="20";
(20,10)*{}="21";
(0,-10)*{}="22";
(20,-10)*{}="23";
(5,15)*{}="30";
(15,15)*{}="31";
(25,5)*{}="32";
(25,-5)*{}="33";
(15,-15)*{}="34";
(5,-15)*{}="35";
(-5,-5)*{}="36";
(-5,5)*{}="37";
"00",{\ar@{-}|(0.54)*=0@{>} "01"};
"00",{\ar@{-}|(0.41)*=0@{<} "10"};
"10",{\ar@{-}|(0.59)*=0@{>} "01"};
"01",{\ar@{-}|(0.59)*=0@{>} "11"};
"11",{\ar@{-}|(0.41)*=0@{<} "00"};
"00",{\ar@{-}|(0.37)*=0@{<} "20"};
"20",{\ar@{-}|(0.37)*=0@{<} "10"};
"11",{\ar@{-}|(0.37)*=0@{<} "22"};
"22",{\ar@{-}|(0.37)*=0@{<} "00"};
"10",{\ar@{-}|(0.63)*=0@{>} "21"};
"21",{\ar@{-}|(0.63)*=0@{>} "01"};
"01",{\ar@{-}|(0.63)*=0@{>} "23"};
"23",{\ar@{-}|(0.63)*=0@{>} "11"};
"00",{\ar@{-}|(0.37)*=0@{<} "37"};
"37",{\ar@{-}|(0.37)*=0@{<} "20"};
"20",{\ar@{-}|(0.37)*=0@{<} "30"};
"30",{\ar@{-}|(0.37)*=0@{<} "10"};
"10",{\ar@{-}|(0.63)*=0@{>} "31"};
"31",{\ar@{-}|(0.63)*=0@{>} "21"};
"21",{\ar@{-}|(0.63)*=0@{>} "32"};
"32",{\ar@{-}|(0.63)*=0@{>} "01"};
"01",{\ar@{-}|(0.63)*=0@{>} "33"};
"33",{\ar@{-}|(0.63)*=0@{>} "23"};
"23",{\ar@{-}|(0.63)*=0@{>} "34"};
"34",{\ar@{-}|(0.63)*=0@{>} "11"};
"11",{\ar@{-}|(0.37)*=0@{<} "35"};
"35",{\ar@{-}|(0.37)*=0@{<} "22"};
"22",{\ar@{-}|(0.37)*=0@{<} "36"};
"36",{\ar@{-}|(0.37)*=0@{<} "00"};
\endxy\hfill{}\xy (10,-20)*{X_3^{-+}};
(0,0)*{}="00";
(20,0)*{}="01";
(10,10)*{}="10";
(10,-10)*{}="11";
(0,10)*{}="20";
(20,10)*{}="21";
(0,-10)*{}="22";
(20,-10)*{}="23";
(5,15)*{}="30";
(15,15)*{}="31";
(25,5)*{}="32";
(25,-5)*{}="33";
(15,-15)*{}="34";
(5,-15)*{}="35";
(-5,-5)*{}="36";
(-5,5)*{}="37";
"00",{\ar@{-}|(0.54)*=0@{>} "01"};
"00",{\ar@{-}|(0.59)*=0@{>} "10"};
"10",{\ar@{-}|(0.59)*=0@{>} "01"};
"01",{\ar@{-}|(0.41)*=0@{<} "11"};
"11",{\ar@{-}|(0.41)*=0@{<} "00"};
"00",{\ar@{-}|(0.63)*=0@{>} "20"};
"20",{\ar@{-}|(0.63)*=0@{>} "10"};
"11",{\ar@{-}|(0.37)*=0@{<} "22"};
"22",{\ar@{-}|(0.37)*=0@{<} "00"};
"10",{\ar@{-}|(0.63)*=0@{>} "21"};
"21",{\ar@{-}|(0.63)*=0@{>} "01"};
"01",{\ar@{-}|(0.37)*=0@{<} "23"};
"23",{\ar@{-}|(0.37)*=0@{<} "11"};
"20",{\ar@{-}|(0.63)*=0@{>} "30"};
"30",{\ar@{-}|(0.63)*=0@{>} "10"};
"10",{\ar@{-}|(0.63)*=0@{>} "31"};
"31",{\ar@{-}|(0.63)*=0@{>} "21"};
"21",{\ar@{-}|(0.63)*=0@{>} "32"};
"32",{\ar@{-}|(0.63)*=0@{>} "01"};
"01",{\ar@{-}|(0.37)*=0@{<} "33"};
"33",{\ar@{-}|(0.37)*=0@{<} "23"};
"23",{\ar@{-}|(0.37)*=0@{<} "34"};
"34",{\ar@{-}|(0.37)*=0@{<} "11"};
"11",{\ar@{-}|(0.37)*=0@{<} "35"};
"35",{\ar@{-}|(0.37)*=0@{<} "22"};
"22",{\ar@{-}|(0.37)*=0@{<} "36"};
"36",{\ar@{-}|(0.37)*=0@{<} "00"};
"00",{\ar@{-}|(0.63)*=0@{>} "37"};
"37",{\ar@{-}|(0.63)*=0@{>} "20"};
\endxy\hfill{}\xy (10,-20)*{X_3^{--}};
(0,0)*{}="00";
(20,0)*{}="01";
(10,10)*{}="10";
(10,-10)*{}="11";
(0,10)*{}="20";
(20,10)*{}="21";
(0,-10)*{}="22";
(20,-10)*{}="23";
(5,15)*{}="30";
(15,15)*{}="31";
(25,5)*{}="32";
(25,-5)*{}="33";
(15,-15)*{}="34";
(5,-15)*{}="35";
(-5,-5)*{}="36";
(-5,5)*{}="37";
"00",{\ar@{-}|(0.54)*=0@{>} "01"};
"00",{\ar@{-}|(0.59)*=0@{>} "10"};
"10",{\ar@{-}|(0.59)*=0@{>} "01"};
"01",{\ar@{-}|(0.59)*=0@{>} "11"};
"11",{\ar@{-}|(0.59)*=0@{>} "00"};
"00",{\ar@{-}|(0.63)*=0@{>} "20"};
"20",{\ar@{-}|(0.63)*=0@{>} "10"};
"11",{\ar@{-}|(0.63)*=0@{>} "22"};
"22",{\ar@{-}|(0.63)*=0@{>} "00"};
"10",{\ar@{-}|(0.63)*=0@{>} "21"};
"21",{\ar@{-}|(0.63)*=0@{>} "01"};
"01",{\ar@{-}|(0.63)*=0@{>} "23"};
"23",{\ar@{-}|(0.63)*=0@{>} "11"};
"20",{\ar@{-}|(0.63)*=0@{>} "30"};
"30",{\ar@{-}|(0.63)*=0@{>} "10"};
"10",{\ar@{-}|(0.63)*=0@{>} "31"};
"31",{\ar@{-}|(0.63)*=0@{>} "21"};
"21",{\ar@{-}|(0.63)*=0@{>} "32"};
"32",{\ar@{-}|(0.63)*=0@{>} "01"};
"01",{\ar@{-}|(0.63)*=0@{>} "33"};
"33",{\ar@{-}|(0.63)*=0@{>} "23"};
"23",{\ar@{-}|(0.63)*=0@{>} "34"};
"34",{\ar@{-}|(0.63)*=0@{>} "11"};
"11",{\ar@{-}|(0.63)*=0@{>} "35"};
"35",{\ar@{-}|(0.63)*=0@{>} "22"};
"22",{\ar@{-}|(0.63)*=0@{>} "36"};
"36",{\ar@{-}|(0.63)*=0@{>} "00"};
"00",{\ar@{-}|(0.63)*=0@{>} "37"};
"37",{\ar@{-}|(0.63)*=0@{>} "20"};
\endxy\hfill{}

\caption{\label{fig:The-four-choices}The four choices of orientations for
$X_{r}$, depicted for $r=3$.}
\end{figure}

The space of $1$-forms $\Omega^{1}\left(X_{r}\right)$ is naturally
a representation of $G=\mathrm{Sym}\left(X_{r}\right)$, by $\left(\gamma f\right)\left(e\right)=f\left(\gamma^{-1}e\right)$
(where $\gamma\in G$, $f\in\Omega^{1}\left(X_{r}\right)$, $e\in X_{r}^{1}$).
We denote by $\Omega_{\pm\pm}^{\left(r\right)}=\Omega_{\pm\pm}^{1}\left(X^{r}\right)$
its $V_{\pm\pm}$-isotypic components. For example, $f\in\Omega_{+-}^{\left(r\right)}$
if and only if it satisfies $\tau_{h}f=f$ and $\tau_{v}f=-f$ (which
implies that $\sigma f=\tau_{v}\tau_{h}f=-f$). 

We say that a $1$-form on $X_{r}$ is \emph{$++$-spherical}, denoted
$f\in\mathcal{S}_{++}^{\left(r\right)}$, if it is
\begin{enumerate}
\item spherical in absolute value (i.e.\ $\left|f\left(e\right)\right|=\left|f\left(e'\right)\right|$
whenever $\dist\left(e_{0},e\right)=\dist\left(e_{0},e'\right)$),
and 
\item $V_{++}$-isotypic (namely $f\in\Omega_{++}^{\left(r\right)}$, or
equivalently, $f$ is of constant sign on $X_{r}^{++}$). 
\end{enumerate}
The definition of $\mathcal{S}_{+-}^{\left(r\right)},\mathcal{S}_{-+}^{\left(r\right)},\mathcal{S}_{--}^{\left(r\right)}$
are analogue. 

Let $e_{1},\ldots,e_{r}$ be edges in the first quadrant of $X_{r}$
oriented as in $X_{r}^{\pm\pm}$, and with $\dist\left(e_{i},e_{0}\right)=i$.
Let $f$ be an eigenform of $\Delta^{+}$ with eigenvalue $\lambda$,
which is in one of the $\mathcal{S}_{\pm\pm}^{\left(r\right)}$. Then
for $2\leq i\leq r-1$
\[
\lambda f\left(e_{i}\right)=\left(\Delta^{+}f\right)\left(e_{i}\right)=f\left(e_{i}\right)-\frac{1}{2}\left[f\left(e_{i-1}\right)-f\left(e_{i}\right)+2f\left(e_{i+1}\right)\right]
\]
and 
\[
\lambda f\left(e_{r}\right)=\left(\Delta^{+}f\right)\left(e_{r}\right)=f\left(e_{r}\right)-\left[f\left(e_{r-1}\right)-f\left(e_{r}\right)\right].
\]
The behavior of $f$ around $e_{0},e_{1}$ depends on the isotypic
component. We assume $r\geq2$, and leave it to the reader to verify
the cases $r=0,1$. Every form in $\Omega_{++}^{\left(r\right)},\Omega_{+-}^{\left(r\right)},\Omega_{--}^{\left(r\right)}$
must vanish on the middle edge $e_{0}$: for the first two, since
\[
f\left(e_{0}\right)=\left(\tau_{h}f\right)\left(e_{0}\right)=f\left(\tau_{h}e_{0}\right)=f\left(\overline{e_{0}}\right)=-f\left(e_{0}\right),
\]
and for the last one since $f\left(e_{0}\right)=\left(-\tau_{v}f\right)\left(e_{0}\right)=-f\left(\tau_{v}e_{0}\right)=-f\left(e_{0}\right)$.
For a spherical $\left(-+\right)$-functions we have
\[
\lambda f\left(e_{0}\right)=\left(\Delta^{+}f\right)\left(e_{0}\right)=f\left(e_{0}\right)-\frac{1}{2}\left[4\cdot f\left(e_{1}\right)\right],
\]
and at $e_{1}$ we have (using the fact that $f\left(e_{0}\right)=0$
for $f\in\Omega_{++}^{\left(r\right)},\Omega_{+-}^{\left(r\right)},\Omega_{--}^{\left(r\right)}$)
\[
\lambda f\left(e_{1}\right)=\left(\Delta^{+}f\right)\left(e_{1}\right)=\begin{cases}
f\left(e_{1}\right)-\frac{1}{2}\left[f\left(e_{1}\right)+2f\left(e_{2}\right)\right] & f\in\Omega_{++}^{\left(r\right)},\Omega_{+-}^{\left(r\right)}\\
f\left(e_{1}\right)-\frac{1}{2}\left[f\left(e_{0}\right)-f\left(e_{1}\right)+2f\left(e_{2}\right)\right] & f\in\Omega_{-+}^{\left(r\right)}\\
f\left(e_{1}\right)-\frac{1}{2}\left[-f\left(e_{1}\right)+2f\left(e_{2}\right)\right] & f\in\Omega_{--}^{\left(r\right)}.
\end{cases}
\]
The matrices $M_{\pm\pm}^{\left(r\right)}$ represent these equations,
and thus the $++$-spherical spectrum of $X^{r}$ is $\Spec\Delta^{+}\big|_{\mathcal{S}_{++}^{\left(r\right)}}=\Spec M_{++}^{\left(r\right)}$,
and likewise for the other $\mathcal{S}_{\pm\pm}^{\left(r\right)}$.

Until now we have only accounted for the spherical part of $\Omega^{1}\left(X\right)$,
finding in total $4r+1$ eigenvalues. The other eigenvalues are obtained
by using spherical eigenforms of $X^{i}$ with $i<r$. 

Denote by $X_{r}^{\mathfrak{h}}$ the upper half of $X_{r}$, including
$e_{0}$, which is a fundamental domain for $\left\{ id,\tau_{v}\right\} $.
Observe that $X_{r}\backslash\overset{\circ}{X_{1}}$ (by which we
mean $X_{r}$ after deleting $e_{0}$ and the two triangles adjacent
to it, but not the other four edges), is comprised of four copies
of $X_{r-1}^{\mathfrak{h}}$, which intersect only in vertices. Denote
these four copies of $X_{r-1}^{\mathfrak{h}}$ by $Y_{1},\ldots,Y_{4}$.
Let $f\in\mathcal{S}_{++}^{\left(r-1\right)}$ be a $\left(++\right)$-spherical
$\lambda$-eigenform on $X_{r-1}$, and define $g\in\Omega^{1}\left(X_{r}\right)$
by $g\big|_{Y_{1}}=f\big|_{X_{r-1}^{\mathfrak{h}}}$ and $g\big|_{Y_{2}}=g\big|_{Y_{3}}=g\big|_{Y_{4}}=0$.
We show now that $g$ is a $\lambda$-eigenform of $X^{r}$. Since
$f\in\Omega_{++}^{\left(r-1\right)}$, $g\left(e_{1}\right)=f\left(e_{0}\right)=0$,
where $e_{1}$ is the edge incident to $e_{0}$ in $Y_{1}$. Therefore,
$\Delta^{+}g=\lambda g$ holds everywhere outside $Y_{1}$. It also
holds at $e_{1}$, since if $e_{2},e_{2}'$ are the two edges incident
to $e_{1}$ in $Y_{1}$, then $g\left(e_{2}\right)=-g\left(e_{2}'\right)$
since $f$ is symmetric with respect to $\tau_{h}$. Obviously, $\Delta^{+}g=\lambda g$
holds in $Y_{1}\backslash\left\{ e_{1}\right\} $, and we are done.
We could have taken $g\big|_{Y_{i}}=f\big|_{X_{r-1}^{\mathfrak{h}}}$
for any $i\in\left\{ 1,2,3,4\right\} $, and the resulting eigenforms
are independent. We remark that taking $f\in\Omega_{+-}^{\left(r-1\right)}$
would also work, but would give again the same eigenforms, while $f\in\Omega_{-+}^{\left(r\right)},\Omega_{--}^{\left(r\right)}$
would not define an eigenform on $X_{r}$.

More generally, $X_{r}\backslash\overset{\circ}{X_{j}}$ is comprised
of $2^{j+1}$ copies of $X_{r-j}^{\mathfrak{h}}$, and in a similar
way every eigenform of $\Delta^{+}\big|_{\mathcal{S}_{++}^{\left(r-j\right)}}$
contributes $2^{j+1}$ eigenforms to $X^{r}$. We recall that for
$f\in\mathcal{S}_{++}^{\left(r-j\right)}$ we always have $f\left(e_{0}\right)=0$,
and observe that due to the recursion relations if $f\neq0$ then
$f\left(e_{1}\right)\neq0$. Therefore, the eigenforms obtained from
copies of $X_{r-j}^{\mathfrak{h}}$ for various $j$ are all linearly
independent, as they are supported outside different balls in $X^{r}$.
Together with the $4r+1$ spherical eigenforms, this accounts for
\[
4r+1+\sum_{j=1}^{r}2^{j+1}\cdot\left|\Spec\Delta^{+}\big|_{\mathcal{S}_{++}^{\left(r-j\right)}}\right|=4r+1+\sum_{j=1}^{r-1}2^{j+1}\left(r-j\right)=2^{r+2}-3
\]
independent eigenforms, and since this is the dimension of $\Omega^{1}\left(X_{r}\right)$
we are done.\end{proof}
\begin{prop}
For every $r\in\mathbb{N}$ and $\lambda\in\Spec M_{\pm\pm}^{\left(r\right)}$,
either $\lambda=0$ or $\frac{3}{2}-\sqrt{2}<\lambda$.\end{prop}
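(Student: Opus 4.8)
The plan is to show that none of the four matrices has an eigenvalue in the interval $(0,c]$, where $c=\frac{3}{2}-\sqrt{2}$. Since the matrices are symmetrizable — they represent the positive semidefinite operator $\Delta^{+}$ restricted to the spherical sectors $\mathcal{S}_{\pm\pm}^{(r)}$ — their spectra are real and nonnegative, so ruling out $(0,c]$ is exactly the assertion. All four share the same interior recursion and the same outer row: writing an eigenvector as $f_{1},\dots,f_{r}$ (with an extra $f_{0}$ in the $-+$ case), the eigenvalue equation reads $f_{n+1}=(\tfrac{3}{2}-\lambda)f_{n}-\tfrac{1}{2}f_{n-1}$ at interior cells and $(2-\lambda)f_{r}=f_{r-1}$ at the outermost cell; only the top row distinguishes the four families. (The value $\lambda=0$ corresponds to the restriction of the harmonic spherical form $f(n)=2^{-n}$ of $T_{2}^{2}$, which is why $0$ occurs in every spectrum.)

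The engine is a transfer-matrix computation. For $\lambda\in(0,c]$ one has $\tfrac{3}{2}-\lambda\geq\sqrt{2}$, so the characteristic equation $x^{2}-(\tfrac{3}{2}-\lambda)x+\tfrac{1}{2}=0$ has real roots $x_{\pm}>0$ with $x_{+}x_{-}=\tfrac{1}{2}$ and $x_{+}\geq\tfrac{1}{\sqrt{2}}\geq x_{-}>\tfrac{1}{2}$ (the last inequality since $x_{-}$ increases away from $\tfrac{1}{2}$ as $\lambda$ leaves $0$). Setting $T=\left(\begin{smallmatrix}\frac{3}{2}-\lambda & -\frac{1}{2}\\ 1 & 0\end{smallmatrix}\right)$, with eigenvectors $u_{\pm}=(x_{\pm},1)$, the top boundary condition fixes a vector $v$ (namely $v=(\tfrac{1}{2}-\lambda,1)$ for $M_{++}$, $v=(\tfrac{3}{2}-\lambda,1)$ for $M_{--}$, and $v=(1-\lambda,2)$ for $M_{-+}$), while the outer condition is $w^{\top}(T^{m}v)=0$ with $w=(2-\lambda,-1)$ and $m$ the number of interior steps. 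Decomposing $v=\alpha u_{+}+\beta u_{-}$ gives $E_{r}(\lambda):=w^{\top}T^{m}v=Px_{+}^{m}+Qx_{-}^{m}$, where $P=\alpha\,(w^{\top}u_{+})$ and $Q=\beta\,(w^{\top}u_{-})$; a genuine eigenvector forces $E_{r}(\lambda)=0$ (and $f_{1}\neq0$, else the recursion collapses to $0$).

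The decisive observation is a sign principle: since $x_{+}\geq x_{-}>0$ we have $(x_{-}/x_{+})^{m}\leq1$, so if $P$ and $P+Q$ carry the same strict sign then $E_{r}(\lambda)\neq0$ for every $r$ — if $Q$ has that sign too the two terms reinforce, and otherwise $|P|>|Q|$ makes $Px_{+}^{m}$ dominate. I would therefore verify two things on $(0,c]$. First, $w^{\top}u_{+}=(2-\lambda)x_{+}-1>0$, which follows from $x_{+}\geq\tfrac{1}{\sqrt{2}}$ and $2-\lambda\geq\tfrac{1}{2}+\sqrt{2}$; together with the easily computed sign of $\alpha$ (namely $\alpha<0$ for $M_{++}$ and $M_{-+}$, and $\alpha=x_{+}/(x_{+}-x_{-})>0$ for $M_{--}$) this fixes the sign of $P$. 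Second, $P+Q=w^{\top}v$ is an explicit quadratic: $\lambda(\lambda-\tfrac{5}{2})$ for $M_{++}=M_{+-}$, $\lambda^{2}-\tfrac{7}{2}\lambda+2$ for $M_{--}$, and $\lambda(\lambda-3)$ for $M_{-+}$, each of constant sign on $(0,c]$ (the smaller root $\tfrac{7-\sqrt{17}}{4}$ of the middle quadratic still exceeds $c$). In each case $\sgn P=\sgn(P+Q)$, so $E_{r}$ has no zero on the open interval $(0,c)$.

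The remaining obstacle, and the only point needing separate care, is the endpoint $\lambda=c$, where $x_{+}=x_{-}=\tfrac{1}{\sqrt{2}}$ and $T$ degenerates to a single Jordan block. There $N:=T-\tfrac{1}{\sqrt{2}}I$ satisfies $N^{2}=0$, so $T^{m}=2^{-m/2}(I+m\sqrt{2}\,N)$ and $E_{r}(c)=2^{-m/2}\big(w^{\top}v+m\sqrt{2}\,w^{\top}Nv\big)$; a direct evaluation shows $w^{\top}v$ and $w^{\top}Nv$ are nonzero of the same sign (for $M_{++}$ both are negative, giving $E_{r}(c)<0$), which excludes $\lambda=c$ as well. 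Combining the open-interval argument with this confluent computation (and disposing of the matrices of size $1$ or $2$ by hand) yields $\Spec M_{\pm\pm}^{(r)}\cap(0,c]=\varnothing$, which is the claim. I expect most of the actual writing to be the bookkeeping of $\alpha$ and of $(2-\lambda)x_{\pm}-1$ across the three top-row types, plus the short Jordan-block evaluation at $\lambda=c$.
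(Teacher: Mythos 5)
Your proposal is correct and is essentially the paper's argument in transfer-matrix clothing: the paper expands $\det\left(M_{\pm\pm}^{\left(r\right)}-\lambda I\right)$ by minors to get the same three-term recursion, solves it as $\alpha\left(\lambda\right)\mu_{+}\left(\lambda\right)^{r}+\beta\left(\lambda\right)\mu_{-}\left(\lambda\right)^{r}$ with $\mu_{\pm}$ equal to your $x_{\pm}$, and rules out zeros on $\left(0,\frac{3}{2}-\sqrt{2}\right)$ by exactly your sign-dominance principle. The one genuine (if minor) improvement in your write-up is the confluent Jordan-block computation at $\lambda=\frac{3}{2}-\sqrt{2}$ itself and the uniform treatment of all four boundary types, whereas the paper's written proof only addresses the open interval and leaves the $-+$ case to the reader.
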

\begin{proof}
Let $p_{++}^{\left[r\right]}\left(\lambda\right)=\det\left(M_{++}^{\left(r\right)}-\lambda I\right)$,
and similarly for the other $\pm\pm$. Expanding $M_{--}^{\left(r\right)}-\lambda I$
by minor gives 
\begin{gather*}
p_{--}^{\left[1\right]}\left(\lambda\right)=1-\lambda,\quad p_{--}^{\left[2\right]}\left(\lambda\right)=\lambda^{2}-\frac{7}{2}\lambda+2,\quad p_{--}^{\left[3\right]}=-\lambda^{3}+5\lambda^{2}-\frac{27}{4}\lambda+2\\
p_{--}^{\left[r\right]}\left(\lambda\right)=\left(\frac{3}{2}-\lambda\right)p_{--}^{\left[r-1\right]}\left(\lambda\right)-\frac{1}{2}p_{--}^{\left[r-2\right]}\left(\lambda\right)\qquad\left(r\geq4\right).
\end{gather*}
This yields a quadratic recurrence formula in $\mathbb{Q}\left[\lambda\right]$
whose solution (for $r\geq2$) is $p_{--}^{\left[r\right]}\left(\lambda\right)=\alpha\left(\lambda\right)\mu_{+}\left(\lambda\right)^{r}+\beta\left(\lambda\right)\mu_{-}\left(\lambda\right)^{r}$,
where 
\begin{align*}
\alpha\left(\lambda\right) & =2-\beta\left(\lambda\right)=\frac{\left(2\lambda-2\right)\sqrt{4\lambda^{2}-12\lambda+1}+4\lambda^{2}-10\lambda-2}{\left(2\lambda-3\right)\sqrt{4\lambda^{2}-12\lambda+1}+4\lambda^{2}-12\lambda+1},\\
\mu_{\pm}\left(\lambda\right) & =\frac{3}{4}-\frac{\lambda}{2}\pm\frac{1}{4}\sqrt{4\lambda^{2}-12\lambda+1}.
\end{align*}
For $0<\lambda<\frac{3}{2}-\sqrt{2}$ one can verify that $\beta\left(\lambda\right)<0<\alpha\left(\lambda\right)$
and $0<\mu_{-}\left(\lambda\right)<\mu_{+}\left(\lambda\right)$,
and for $r\geq2$ 
\begin{align*}
p_{--}^{\left[r\right]}\left(\lambda\right) & =\mu_{+}\left(\lambda\right)^{r}\left(\alpha\left(\lambda\right)+\beta\left(\lambda\right)\left(\frac{\mu_{-}\left(\lambda\right)}{\mu_{+}\left(\lambda\right)}\right)^{r}\right)\geq\mu_{+}\left(\lambda\right)^{r}\left(\alpha\left(\lambda\right)+\beta\left(\lambda\right)\left(\frac{\mu_{-}\left(\lambda\right)}{\mu_{+}\left(\lambda\right)}\right)^{2}\right)\\
 & =\mu_{+}\left(\lambda\right)^{r-2}\left(\alpha\left(\lambda\right)\mu_{+}\left(\lambda\right)^{2}+\beta\left(\lambda\right)\mu_{-}\left(\lambda\right)^{2}\right)=\mu_{+}\left(\lambda\right)^{r-2}p_{--}^{\left[2\right]}\left(\lambda\right)>0.
\end{align*}
Using the solution for $p_{--}^{\left[r\right]}$ one can write $p_{+-}^{\left[r\right]}$,
for $r\geq4$, as 
\begin{align*}
p_{+-}^{\left[r\right]}\left(\lambda\right) & =\left(\frac{1}{2}-\lambda\right)p_{--}^{\left[r-1\right]}\left(\lambda\right)-\frac{1}{2}p_{--}^{\left[r-2\right]}\left(\lambda\right)\\
 & =\alpha\left(\lambda\right)\left(\left(\frac{1}{2}-\lambda\right)\mu_{+}\left(\lambda\right)-\frac{1}{2}\right)\mu_{+}\left(\lambda\right)^{r-2}+\beta\left(\lambda\right)\left(\left(\frac{1}{2}-\lambda\right)\mu_{-}\left(\lambda\right)-\frac{1}{2}\right)\mu_{-}\left(\lambda\right)^{r-2}
\end{align*}
Now $\alpha\left(\lambda\right)\left(\left(\frac{1}{2}-\lambda\right)\mu_{+}\left(\lambda\right)-\frac{1}{2}\right)<0<\beta\left(\lambda\right)\left(\left(\frac{1}{2}-\lambda\right)\mu_{-}\left(\lambda\right)-\frac{1}{2}\right)$
for $0<\lambda<\frac{3}{2}-\sqrt{2}$, and it follows that $p_{+-}^{\left[r\right]}\left(\lambda\right)$
does not vanish in this interval. This takes care of $p_{++}^{\left[r\right]}\left(\lambda\right)$
as well, since $M_{++}^{\left[r\right]}=M_{+-}^{\left[r\right]}$.
The considerations for $p_{-+}^{\left[r\right]}\left(\lambda\right)$
are similar, and we leave them to the reader.
\end{proof}
We can conclude now that $\left\{ X_{r}\right\} _{r\in\mathbb{N}}$
constitute a counterexample for high-dimensional Alon-Boppana:
\begin{proof}[Proof of Theorem \ref{thm:T_2_2-counterexample}]
By the results in this section, the spectrum of $\Delta_{X_{r}}^{+}$
is contained in $\left\{ 0\right\} \cup\left(\frac{3}{2}-\sqrt{2},3\right]$.
Since $X_{r}$ is contractible, its first homology is trivial and
thus the zeros in the spectrum all belong to coboundaries, i.e., $\Spec\Delta_{X_{r}}^{+}\big|_{Z_{1}}\subseteq\left(\frac{3}{2}-\sqrt{2},3\right]$.
Therefore, $\liminf\limits _{r\rightarrow\infty}\lambda\left(X_{r}\right)\geq\frac{3}{2}-\sqrt{2}$.
In fact, $\liminf\limits _{r\rightarrow\infty}\lambda\left(X_{r}\right)=\frac{3}{2}-\sqrt{2}$.
This follows from $\frac{3}{2}-\sqrt{2}\in\Spec T_{2}^{2}$ (by Theorem
\ref{thm:The-spectrum-of-trees}), together with Theorem \ref{thm:cont-spec},
which asserts that there exist $\lambda_{r}\in\Spec\Delta_{X_{r}}^{+}$
such that $\lambda_{r}\rightarrow\frac{3}{2}-\sqrt{2}$. As $\lambda_{r}$
can be assumed to be nonzero, they are in fact in $\Spec\Delta_{X_{r}}^{+}\big|_{Z_{1}}$,
so that $\liminf\limits _{r\rightarrow\infty}\lambda\left(X_{r}\right)\leq\lim\limits _{r\rightarrow\infty}\lambda_{r}=\frac{3}{2}-\sqrt{2}$.
Finally, by Lemma \ref{lem:d-2-of-inf-deg} and Theorem \ref{thm:The-spectrum-of-trees}
we have $\lambda\left(T_{2}^{2}\right)=0$.
\end{proof}

\subsection{\label{sub:Spectral-radius-and}Spectral radius and random walk}

The spectral radius of an operator $T$ is $\rho\left(T\right)=\max\left\{ \left|\lambda\right|\,\middle|\,\lambda\in\Spec T\right\} $.
If $T$ is a self-adjoint operator on a Hilbert space then $\rho\left(T\right)=\left\Vert T\right\Vert $.
In this section we observe the transition operator $A=A\left(X,p\right)$
acting on $\Omega_{L^{2}}^{d-1}$, and relate it to the asymptotic
behavior of the expectation process on $X$. Under additional conditions,
this can be translated to a result on the spectral gap of the complex.
\begin{prop}
\label{prop:spectral_radius}Let $\mathcal{E}_{n}^{\sigma}$ be the
expectation process associated with the $p$-lazy $\left(d-1\right)$-walk
on a finite or countable $d$-complex $X$ with bounded $\left(d-1\right)$-degrees.
\begin{enumerate}
\item For all values of $p$
\[
\sup_{\sigma\in X_{\pm}^{d-1}}\limsup_{n\rightarrow\infty}\sqrt[n]{\left|\mathcal{E}_{n}^{\sigma}\left(\sigma\right)\right|}=\|A\|=\rho\left(A\right).
\]

\item If $\frac{1}{2}\leq p\leq1$ then 
\[
\sup_{\sigma\in X_{\pm}^{d-1}}\limsup_{n\rightarrow\infty}\sqrt[n]{\mathcal{E}_{n}^{\sigma}\left(\sigma\right)}=\|A\|=\max\Spec A=\frac{p\left(d-1\right)+1}{d}-\frac{1-p}{d}\cdot\min\Spec\Delta^{+}.
\]

\item If $\frac{1}{2}\leq p\leq1$ and all $\left(d-2\right)$-cells in
$X$ are of infinite degree, then
\[
\sup_{\sigma\in X_{\pm}^{d-1}}\limsup_{n\rightarrow\infty}\sqrt[n]{\mathcal{E}_{n}^{\sigma}\left(\sigma\right)}=\frac{p\left(d-1\right)+1}{d}-\frac{1-p}{d}\cdot\lambda\left(X\right).
\]

\end{enumerate}
\end{prop}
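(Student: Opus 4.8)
The plan is to reduce everything to the bounded self-adjoint operator $A = A(X,p)$ acting on $\Omega_{L^2}^{d-1}$, together with its spectral measures $\mu_\sigma^X$, by exploiting the identity $\mathcal{E}_n^\sigma = A^n\mathbbm{1}_\sigma$. First I would record the bookkeeping identity $\mathcal{E}_n^\sigma(\sigma) = \deg(\sigma)\cdot\langle A^n\mathbbm{1}_\sigma,\mathbbm{1}_\sigma\rangle$, already observed in the proof of Lemma~\ref{Lemma-convergence-of-spectral-measures}; it holds because the inner product on $\Omega^{d-1}$ weights $\sigma$ by $1/\deg\sigma$ and $\mathbbm{1}_\sigma$ is supported on the single unoriented cell $\sigma$. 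Since the $(d-1)$-degrees are globally bounded and uniformity forces $\deg\sigma\ge 1$, one has $\sqrt[n]{\deg\sigma}\to 1$ uniformly in $\sigma$, so the degree factor is invisible to the $n$-th root and it suffices to analyze $\limsup_n\sqrt[n]{|\langle A^n\mathbbm{1}_\sigma,\mathbbm{1}_\sigma\rangle|}$.

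For part \emph{(1)}, the upper bound is immediate: by Cauchy--Schwarz and self-adjointness, $|\langle A^n\mathbbm{1}_\sigma,\mathbbm{1}_\sigma\rangle|\le\|A\|^n\|\mathbbm{1}_\sigma\|^2 = \rho(A)^n/\deg\sigma$, so $\mathcal{E}_n^\sigma(\sigma)\le\rho(A)^n$ and $\limsup_n\sqrt[n]{|\mathcal{E}_n^\sigma(\sigma)|}\le\rho(A)$ for every $\sigma$. For the matching lower bound I would pass to even powers to defeat sign cancellations: writing $\langle A^{2m}\mathbbm{1}_\sigma,\mathbbm{1}_\sigma\rangle = \int\lambda^{2m}\,d\mu_\sigma^X(\lambda)\ge 0$, the fact that $\|g\|_{L^{p}(\mu_\sigma^X)}\to\|g\|_{L^\infty(\mu_\sigma^X)}$ as $p\to\infty$ for the finite measure $\mu_\sigma^X$ (of total mass $1/\deg\sigma$) applied to $g(\lambda)=\lambda$ gives $\lim_m\sqrt[2m]{\langle A^{2m}\mathbbm{1}_\sigma,\mathbbm{1}_\sigma\rangle} = \max\{|\lambda|:\lambda\in\supp\mu_\sigma^X\}$. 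Hence $\limsup_n\sqrt[n]{|\mathcal{E}_n^\sigma(\sigma)|}\ge\max\{|\lambda|:\lambda\in\supp\mu_\sigma^X\}$, and taking the supremum over $\sigma$ while invoking (\ref{eq:spectral_stuff}), i.e.\ $\Spec A = \bigcup_\sigma\supp\mu_\sigma^X$, turns the right-hand side into $\sup\{|\lambda|:\lambda\in\Spec A\} = \rho(A) = \|A\|$, completing part \emph{(1)}.

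Parts \emph{(2)} and \emph{(3)} then follow by pinning down the sign of the spectrum. When $p\ge\frac12$, Proposition~\ref{Prop:random_Laplacian_connection}(2) gives $\Spec A\subseteq[2p-1,\frac{p(d-1)+1}{d}]\subseteq[0,\infty)$, so $A$ is positive semidefinite and every $\mu_\sigma^X$ is supported in $[0,\infty)$; consequently $\mathcal{E}_n^\sigma(\sigma) = \deg\sigma\int\lambda^n\,d\mu_\sigma^X\ge 0$, so the absolute value in part \emph{(1)} may simply be dropped. Together with $\rho(A) = \max\Spec A = \frac{p(d-1)+1}{d}-\frac{1-p}{d}\min\Spec\Delta^+$, read off from the explicit form $A=\frac{p(d-1)+1}{d}I-\frac{1-p}{d}\Delta^+$, this yields \emph{(2)}. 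For \emph{(3)}, the extra hypothesis that all $(d-2)$-cells have infinite degree lets me apply Lemma~\ref{lem:d-2-of-inf-deg}, giving $B^{d-1}=0$ and hence $\lambda(X) = \min\Spec\Delta^+$; substituting this into the formula of \emph{(2)} produces the claimed expression.

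I expect the only genuinely delicate point to be the lower bound in part \emph{(1)}: one cannot read $\rho(A)$ off a single $\sigma$ directly, since the odd moments $\langle A^n\mathbbm{1}_\sigma,\mathbbm{1}_\sigma\rangle$ may cancel, and one must therefore combine the even-power $L^{2m}\!\to\! L^\infty$ estimate for each fixed $\sigma$ with the union-over-$\sigma$ description of the spectrum in (\ref{eq:spectral_stuff}) to recover the global spectral radius. Everything else is a routine application of the spectral theorem and the results already established in the excerpt.
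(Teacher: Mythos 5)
Your proposal is correct and follows essentially the same route as the paper: reduce to $\mathcal{E}_n^{\sigma}(\sigma)=\deg\sigma\int z^{n}\,d\mu_{\sigma}(z)$, identify the per-$\sigma$ growth rate with $\max\{|\lambda| : \lambda\in\supp\mu_{\sigma}\}$, take the supremum via (\ref{eq:spectral_stuff}), and then use positivity of $\Spec A$ for $p\geq\tfrac12$ and Lemma \ref{lem:d-2-of-inf-deg} for parts \emph{(2)} and \emph{(3)}. Your even-power argument merely spells out a step the paper states without elaboration (that odd-moment cancellation cannot lower the $\limsup$), so this is a matter of detail rather than a different approach.
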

\begin{proof}
For an oriented $\left(d-1\right)$-cell $\sigma$, 
\[
\mathcal{E}_{n}^{\sigma}\left(\sigma\right)=A^{n}\mathcal{E}_{0}^{\sigma}\left(\sigma\right)=\deg\sigma\left\langle A^{n}\mathbbm{1}_{\sigma},\mathbbm{1}_{\sigma}\right\rangle =\deg\sigma\int_{\mathbb{C}}z^{n}d\mu_{\sigma}\left(z\right)=\deg\sigma\int_{\Spec A}z^{n}d\mu_{\sigma}\left(z\right),
\]
where $\mu_{\sigma}$ is the spectral measure of $A$ with respect
to $\mathbbm{1}_{\sigma}$. It follows that 
\[
\limsup_{n\rightarrow\infty}\sqrt[n]{\left|\mathcal{E}_{n}^{\sigma}\left(\sigma\right)\right|}=\limsup_{n\rightarrow\infty}\sqrt[n]{\deg\sigma\left|\int_{\supp\mu_{\sigma}}\negthickspace\negthickspace\negthickspace\negthickspace\negthickspace\negthickspace\negthickspace\negthickspace z^{n}d\mu_{\sigma}\left(z\right)\right|}=\max\left\{ \left|\lambda\right|\,\middle|\,\lambda\in\supp\mu_{\sigma}\right\} ,
\]
and by $\Spec\left(A\right)=\bigcup\limits _{\sigma\in X_{\pm}^{d-1}}\text{supp}\left(\mu_{\sigma}\right)$
(see (\ref{eq:spectral_stuff})) 
\[
\sup_{\sigma\in X_{\pm}^{d-1}}\limsup_{n\rightarrow\infty}\sqrt[n]{\left|\mathcal{E}_{n}^{\sigma}\left(\sigma\right)\right|}=\sup_{\sigma\in X_{\pm}^{d-1}}\max\left\{ \left|\lambda\right|\,\middle|\,\lambda\in\supp\mu_{\sigma}\right\} =\rho\left(A\right),
\]
settling \emph{(1)}. Since $\Spec\left(A\right)\subseteq\left[2p-1,\frac{p\left(d-1\right)+1}{d}\right]$,
in the case $p\geq\frac{1}{2}$ the spectrum of $A$ is nonnegative.
Therefore, 
\[
\mathcal{E}_{n}^{\sigma}\left(\sigma\right)=A^{n}\mathcal{E}_{0}^{\sigma}\left(\sigma\right)=\deg\sigma\left\langle A^{n}\mathbbm{1}_{\sigma},\mathbbm{1}_{\sigma}\right\rangle \geq0
\]
so that $\left|\mathcal{E}_{n}^{\sigma}\left(\sigma\right)\right|=\mathcal{E}_{n}^{\sigma}\left(\sigma\right)$,
and in addition $\rho\left(A\right)=\max\Spec A$. This accounts for
\emph{(2)}, and combining this with Lemma \ref{lem:d-2-of-inf-deg}
gives \emph{(3).}
\end{proof}
This proposition is a generalization of the classic connection between
return probability and spectral radius in an infinite connected graph.
Namely, for any vertex $v$ the non-lazy walk on this graph satisfies
\[
\lim_{n\rightarrow\infty}\sqrt[n]{\mathbf{p}_{n}^{v}\left(v\right)}=1-\lambda\left(G\right)=\max\Spec A=\left\Vert A\right\Vert =\rho\left(A\right),
\]
where $A$ is the transition operator of the walk. There are slight
differences, though: in general dimension $p\geq\frac{1}{2}$ is needed
for some of these equalities, and in addition one must take the supremum
over all possible starting points for the process. For graphs this
is not necessary (provided the graph is connected), and we do not
know whether the same is true in general dimension. One case in which
this is not necessary is when the complex is $\left(d-1\right)$-transitive,
in the sense that its symmetry group acts transitively on $X^{d-1}$.
This (together with Theorem \ref{thm:The-spectrum-of-trees}) leads
to the following corollary:
\begin{cor}
For the $k$-regular arboreal $d$-complex $T_{k}^{d}$, the non-lazy
random walk starting at any $\left(d-1\right)$-cell $\sigma$ satisfies
\[
\limsup_{n\rightarrow\infty}\sqrt[n]{\left|\mathbf{p}_{n}^{\sigma}\left(\sigma\right)-\mathbf{p}_{n}^{\sigma}\left(\overline{\sigma}\right)\right|}=\frac{d-1+2\sqrt{d(k-1)}}{kd}.
\]
For $p\geq\frac{1}{2}$, the $p$-lazy walk satisfies 
\[
\limsup_{n\rightarrow\infty}\sqrt[n]{\mathbf{p}_{n}^{\sigma}\left(\sigma\right)-\mathbf{p}_{n}^{\sigma}\left(\overline{\sigma}\right)}=\begin{cases}
p+\frac{1-p}{d} & \quad2\leq k\leq d+1\\
p+\left(1-p\right)\frac{1-d+2\sqrt{d(k-1)}}{kd} & \quad d+1\leq k
\end{cases}.
\]

\end{cor}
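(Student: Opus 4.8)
The plan is to reduce both claims to Proposition~\ref{prop:spectral_radius} together with the spectrum computed in Theorem~\ref{thm:The-spectrum-of-trees}; the only genuine work is discarding the supremum over starting cells and locating the extremal points of the spectrum. Throughout write $A=A(T_k^d,0)$ and $A_p=A(T_k^d,p)=pI+(1-p)A$, and recall $\Lambda_\pm=\frac{1-d\pm 2\sqrt{d(k-1)}}{kd}$. First I would remove the $\sup_\sigma$ using $(d-1)$-transitivity: as already used in the proof of Theorem~\ref{thm:The-spectrum-of-trees}, for any $\sigma,\sigma'\in (T_k^d)^{d-1}$ there is an isometry of $T_k^d$ taking $\sigma$ to $\sigma'$. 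Such an isometry intertwines the walks started at $\sigma$ and $\sigma'$, and the return expectation $\mathcal{E}_n^\sigma(\sigma)=\mathbf{p}_n^\sigma(\sigma)-\mathbf{p}_n^\sigma(\overline{\sigma})$ is additionally invariant under a global reversal of orientation; hence $\mathcal{E}_n^\sigma(\sigma)$ is independent of $\sigma$, so $\limsup_n\sqrt[n]{|\mathcal{E}_n^\sigma(\sigma)|}$ already equals the supremum in Proposition~\ref{prop:spectral_radius}, which is what lets us state the result for a single $\sigma$.

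For the non-lazy walk I would apply Proposition~\ref{prop:spectral_radius}(1), giving $\limsup_n\sqrt[n]{|\mathcal{E}_n^\sigma(\sigma)|}=\rho(A)=\max\{|\lambda|:\lambda\in\Spec A\}$, and then read off $\rho(A)$ from (\ref{eq:tree-spec}). The candidate extremal points are $|\Lambda_-|=\frac{(d-1)+2\sqrt{d(k-1)}}{kd}$, $|\Lambda_+|=\frac{|2\sqrt{d(k-1)}-(d-1)|}{kd}$, and, when $2\le k\le d$, the isolated value $\frac1d$. The triangle inequality gives $|\Lambda_+|\le|\Lambda_-|$, while $d-1+2\sqrt{d(k-1)}>k$ throughout $2\le k\le d$ gives $|\Lambda_-|>\frac1d$; hence $\rho(A)=|\Lambda_-|=\frac{d-1+2\sqrt{d(k-1)}}{kd}$ for every $k\ge2$, which is the first formula.

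For $p\ge\frac12$ I would instead use Proposition~\ref{prop:spectral_radius}(2): here $\Spec A_p\subseteq[2p-1,\,\cdot\,]\subseteq[0,\infty)$, so the absolute value disappears and $\limsup_n\sqrt[n]{\mathcal{E}_n^\sigma(\sigma)}=\max\Spec A_p=p+(1-p)\max\Spec A$. It remains to compute $\max\Spec A$, i.e.\ to compare the top of the band $\Lambda_+$ with $\frac1d$. Clearing denominators, $\Lambda_+\ge\frac1d$ is equivalent to $4d(k-1)\ge(d+k-1)^2$, i.e.\ to $(d-(k-1))^2\le0$, which holds only at $k=d+1$. Together with the fact that $\frac1d$ lies in the spectrum exactly for $2\le k\le d$, this yields $\max\Spec A=\frac1d$ for $2\le k\le d+1$ and $\max\Spec A=\Lambda_+$ for $k\ge d+1$ (the two agreeing at $k=d+1$, where $\Lambda_+=\frac1d$). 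Substituting into $p+(1-p)\max\Spec A$ produces the two cases of the second formula.

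The main obstacle is not analytic but bookkeeping: pinning down which spectral point is extremal. The delicate feature is the crossover at $k=d+1$, where the isolated eigenvalue $\frac1d$ merges with the band top $\Lambda_+$; the sign computation $(d-(k-1))^2\le0$ is exactly what locates it. One should also note that the absolute values in the first formula genuinely cannot be dropped, and that this is precisely why the hypothesis $p\ge\frac12$—which forces $\Spec A_p\subseteq[0,\infty)$—is needed for the second.
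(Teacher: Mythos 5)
Your proposal is correct and follows exactly the route the paper intends: Proposition \ref{prop:spectral_radius} plus $\left(d-1\right)$-transitivity of $T_{k}^{d}$ to drop the supremum over starting cells, and Theorem \ref{thm:The-spectrum-of-trees} to locate the extremal spectral points, with the crossover at $k=d+1$ coming from $\left(d-\left(k-1\right)\right)^{2}\leq0$. The bookkeeping (in particular $\left|\Lambda_{-}\right|>\frac{1}{d}$ on $2\leq k\leq d$ and $\Lambda_{+}\leq\frac{1}{d}$ with equality only at $k=d+1$) checks out.
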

Another corollary of Proposition \ref{prop:spectral_radius} is the
following:
\begin{cor}
If $\dim X=d$ and there exists some $\tau\in X^{d-2}$ of finite
degree (in particular, if $X$ is finite), then the $p\geq\frac{1}{2}$
lazy random walk satisfies 
\[
\sup_{\sigma\in X_{\pm}^{d-1}}\limsup_{n\rightarrow\infty}\sqrt[n]{\mathbf{p}_{n}^{\sigma}\left(\sigma\right)-\mathbf{p}_{n}^{\sigma}\left(\overline{\sigma}\right)}=p+\frac{1-p}{d}.
\]
\end{cor}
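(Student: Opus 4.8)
The plan is to reduce the claim to the single spectral fact that $\min\Spec\Delta^{+}=0$. First I would record the algebraic identity $\frac{p(d-1)+1}{d}=p+\frac{1-p}{d}$. Since the quantity inside the root is exactly $\mathcal{E}_{n}^{\sigma}(\sigma)=\mathbf{p}_{n}^{\sigma}(\sigma)-\mathbf{p}_{n}^{\sigma}(\overline{\sigma})$, and since we are in the regime $p\geq\frac{1}{2}$, Proposition \ref{prop:spectral_radius}(2) already evaluates the left-hand side as $\frac{p(d-1)+1}{d}-\frac{1-p}{d}\cdot\min\Spec\Delta^{+}$. Comparing this with the asserted value $p+\frac{1-p}{d}=\frac{p(d-1)+1}{d}$ and noting that $\frac{1-p}{d}>0$ for $p<1$ (the case $p=1$ being trivial, as the walk then never moves), the corollary is equivalent to the statement that $\min\Spec\Delta^{+}=0$. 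Because $\Delta^{+}=\partial_{d}\partial_{d}^{*}$ is positive semidefinite, its spectrum lies in $[0,\infty)$, so it suffices to exhibit a nonzero element of $\Omega_{L^{2}}^{d-1}$ belonging to $\ker\Delta^{+}=\ker\delta_{d}=Z^{d-1}$.

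To produce such an element I would mimic the ``star'' construction from \S\ref{sub:General-dimension}. Let $\tau\in X^{d-2}$ be a cell of finite degree, and set $f=\delta_{d-1}\mathbbm{1}_{\tau}$. The cochain identity $\delta_{d}\delta_{d-1}=0$, which holds unconditionally, gives $\delta_{d}f=0$ and hence $\Delta^{+}f=\partial_{d}\delta_{d}f=0$; thus $f$ is a closed form, provided it genuinely lies in $\Omega_{L^{2}}^{d-1}\setminus\{0\}$. Note that $f$ is a coboundary, so it contributes a \emph{trivial} zero to the spectrum of $\Delta^{+}$ on the full space $\Omega_{L^{2}}^{d-1}$; this is exactly why the answer is $p+\frac{1-p}{d}$ rather than a quantity involving $\lambda(X)$, and it is consistent with part (3) of Proposition \ref{prop:spectral_radius}, where the opposite hypothesis (all $(d-2)$-cells of infinite degree) kills $B^{d-1}$ and forces $\lambda(X)$ to reappear.

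The verification of $L^{2}$-membership and nonvanishing is where the hypotheses are used. By the coboundary formula (\ref{eq:coboundary-operator}) in codimension one, $f(\sigma)=\delta_{d-1}\mathbbm{1}_{\tau}(\sigma)$ is supported exactly on those $(d-1)$-cells $\sigma$ that contain $\tau$, where it takes the value $\pm\deg\sigma$; there are $\deg\tau$ such cells, a number finite by hypothesis and positive by uniformity, so $f\neq0$. Using the inner product (\ref{eq:our-inner-product}), $\|f\|^{2}=\sum_{\sigma\supset\tau}\frac{1}{\deg\sigma}(\deg\sigma)^{2}=\sum_{\sigma\supset\tau}\deg\sigma$, a sum of $\deg\tau$ terms each bounded by the global bound on $(d-1)$-degrees, hence finite. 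This produces the desired closed $L^{2}$ form, shows $0\in\Spec\Delta^{+}$, and closes the argument; the parenthetical finite case is automatic since every cell of a finite complex has finite degree. The only genuinely delicate point, and the one I would present most carefully, is this last $L^{2}$ bound: it relies simultaneously on the finiteness of $\deg\tau$ and on the standing global boundedness of the $(d-1)$-degrees, without both of which $\delta_{d-1}\mathbbm{1}_{\tau}$ need not be square-summable.
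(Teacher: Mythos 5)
Your proof is correct and takes essentially the same route as the paper, whose entire argument is the one-line observation that $\delta_{d-1}\mathbbm{1}_{\tau}$ lies in $\Omega_{L^{2}}^{d-1}$ and in $\ker\delta_{d}$, so that $0\in\Spec\Delta^{+}$, combined with Proposition \ref{prop:spectral_radius}(2). Your expanded verification of the nonvanishing and the $L^{2}$-bound (using the finiteness of $\deg\tau$ together with the standing global bound on $(d-1)$-degrees) is accurate and fills in exactly the details the paper leaves implicit.
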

\begin{proof}
The form $\delta_{d-1}\mathbbm{1}_{\tau}$ is in $\Omega_{L^{2}}^{d-1}$
and in $\ker\delta_{d}$, so that $0\in\Spec\Delta^{+}$.
\end{proof}

\subsection{\label{sub:Amenability-transiency}Amenability, transience and recurrence}

An infinite connected graph with finite degrees is said to be \emph{amenable}
if its Cheeger constant 
\[
h\left(X\right)=\min_{{A\subseteq V\atop 0<\left|A\right|<\infty}}\frac{\left|E\left(A,V\backslash A\right)\right|}{\left|A\right|}
\]
is zero. It is called \emph{recurrent} if with probability one the
random walk on it returns to its starting point, and \emph{transient}
otherwise. A nonamenable graph is always transient.

All three notions have many equivalent characterizations. Among these
are the following, which relate to the Laplacian of the graph:
\begin{enumerate}
\item If $X$ has bounded degrees, then it is amenable if and only if $\lambda\left(X\right)=\min\Spec\Delta^{+}=0$.
This follows from the so-called discrete Cheeger inequalities due
to Tanner, Dodziuk, and Alon-Milman \cite{Dod84,Tan84,AM85,Alo86}.
\item $X$ is transient if and only if $\mathbb{E}^{v}\left[{\mbox{number of}\atop \mbox{visits to }v}\right]=\sum_{n=0}^{\infty}\mathbf{p}_{n}^{v}\left(v\right)<\infty$
for some $v$, or equivalently for all $v$.
\item $X$ is transient if and only if there exists $f\in\Omega_{L^{2}}^{1}\left(X\right)$
such that $\partial f=\mathbbm{1}_{v}$ for some $v$, or equivalently
for all $v$ \cite{lyons1983simple}.
\end{enumerate}
This suggests observing the following generalizations of these notions
for a simplicial complex of dimension $d$:
\begin{lyxlist}{00.00.0000}
\item [{\textbf{$\mathbf{\left(A\right)}$}}] $\lambda\left(X\right)=0$.
\item [{$\mathbf{\left(A'\right)}$}] $\min\Spec\Delta^{+}=0$.
\item [{$\left(\mathbf{T}\right)$}] $\sum_{n=0}^{\infty}\widetilde{\mathcal{E}}_{n}^{\sigma}\left(\sigma\right)<\infty$
for every $\sigma\in X^{d-1}$, where $\widetilde{\mathcal{E}}$ is
the normalized expectation process of laziness $p$ on $X$, for some
$\frac{1}{2}\leq p<1$ (see Proposition \ref{pro:amen-trans}\emph{(\ref{enu:some-any-p})}).
\item [{$\left(\mathbf{T}'\right)$}] For every $\sigma\in X^{d-1}$ there
exists $f\in\Omega_{L^{2}}^{d}\left(X\right)$ such that $\partial_{d}f=\mathbbm{1}_{\sigma}$.
\end{lyxlist}
For infinite graphs, $\mathbf{\left(A\right)}$ and $\mathbf{\left(A'\right)}$
are the same and are equivalent to amenability, and $\left(\mathbf{T}\right)$
(for any $p$) and $\left(\mathbf{T}'\right)$ are equivalent to transience.
These definitions suggests many questions, some of which are presented
in the next section. The next proposition points out some observations
regarding them. Let us also define the property:
\begin{lyxlist}{00.00.0000}
\item [{$\left(\mathbf{S}\right)$}] All $\left(d-2\right)$-cells in $X$
have infinite degrees,
\end{lyxlist}
which holds in any infinite graph.
\begin{prop}
\label{pro:amen-trans}Let $X$ be a complex of dimension $d$ with
bounded $\left(d-1\right)$-degrees. Then
\begin{enumerate}
\item $\left(\mathbf{A}\right)\Rightarrow\left(\mathbf{A}'\right)$.
\item $\left(\mathbf{A}'\right)+\left(\mathbf{S}\right)\Rightarrow\left(\mathbf{A}\right)$.
\item $\neg\left(\mathbf{A}'\right)\Rightarrow\left(\mathbf{T}'\right)\Rightarrow\left(\mathbf{S}\right)$.
\item $\neg\left(\mathbf{A}'\right)\Rightarrow\left(\mathbf{T}\right)$.
\item \label{enu:some-any-p}If $\left(\mathbf{T}\right)$ holds for some
$\frac{1}{2}\leq p<1$, then it holds for every such $p$.
\item If zero is an isolated point in $\Spec\Delta^{+}$ then $\neg\left(\mathbf{T}\right)$.
\end{enumerate}
\end{prop}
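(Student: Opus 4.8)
The plan is to produce a single oriented $\left(d-1\right)$-cell $\sigma$ for which the series $\sum_{n}\widetilde{\mathcal{E}}_{n}^{\sigma}\left(\sigma\right)$ diverges, which is precisely the negation of $\left(\mathbf{T}\right)$. By part \emph{(\ref{enu:some-any-p})} the truth of $\left(\mathbf{T}\right)$ is independent of the choice of $p$, so I would fix any $\frac{1}{2}\leq p<1$ once and for all. Recall that the bounded $\left(d-1\right)$-degrees make $\Delta^{+}$ (hence $A=A\left(X,p\right)$) bounded and self-adjoint on $\Omega_{L^{2}}^{d-1}$, so the spectral machinery is available.

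First I would extract a harmonic form from the isolation hypothesis. An isolated point of the spectrum of a bounded self-adjoint operator carries a nonzero spectral projection and is therefore an \emph{eigenvalue}; thus $0$ being isolated in $\Spec\Delta^{+}$ gives some $0\neq g\in\ker\Delta^{+}$. Since $g\not\equiv0$, there is a $\sigma\in X^{d-1}$ with $g\left(\sigma\right)\neq0$, so that $\left\langle g,\mathbbm{1}_{\sigma}\right\rangle =\frac{g\left(\sigma\right)}{\deg\sigma}\neq0$, and consequently $\mathbb{P}_{\ker\Delta^{+}}\mathbbm{1}_{\sigma}\neq0$.

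Next I would convert this into a genuine atom of the spectral measure $\mu_{\sigma}$ of $A$. From $A=\frac{p\left(d-1\right)+1}{d}I-\frac{1-p}{d}\Delta^{+}$ one reads off that $\ker\Delta^{+}$ is exactly the eigenspace of $A$ for $\lambda_{0}:=\frac{p\left(d-1\right)+1}{d}$, and since $0\in\Spec\Delta^{+}$ this $\lambda_{0}$ is the top of $\Spec A$. Hence $\mu_{\sigma}\left(\left\{ \lambda_{0}\right\} \right)=\left\Vert \mathbb{P}_{\ker\Delta^{+}}\mathbbm{1}_{\sigma}\right\Vert ^{2}>0$. Invoking the integral representation established in the proof of Proposition \ref{prop:spectral_radius},
\[
\widetilde{\mathcal{E}}_{n}^{\sigma}\left(\sigma\right)=\left(\tfrac{d}{p\left(d-1\right)+1}\right)^{n}\mathcal{E}_{n}^{\sigma}\left(\sigma\right)=\deg\sigma\int_{\Spec A}\left(\tfrac{d\,z}{p\left(d-1\right)+1}\right)^{n}d\mu_{\sigma}\left(z\right),
\]
and noting that $p\geq\frac{1}{2}$ forces $\Spec A\subseteq\left[2p-1,\lambda_{0}\right]\subseteq\left[0,\lambda_{0}\right]$, the integrand is nonnegative throughout. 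Since $\frac{d\,\lambda_{0}}{p\left(d-1\right)+1}=1$, discarding all of $\mu_{\sigma}$ except its atom at $\lambda_{0}$ yields the bound $\widetilde{\mathcal{E}}_{n}^{\sigma}\left(\sigma\right)\geq\deg\sigma\cdot\mu_{\sigma}\left(\left\{ \lambda_{0}\right\} \right)$ for every $n$, so
\[
\sum_{n=0}^{\infty}\widetilde{\mathcal{E}}_{n}^{\sigma}\left(\sigma\right)\geq\deg\sigma\cdot\mu_{\sigma}\left(\left\{ \lambda_{0}\right\} \right)\sum_{n=0}^{\infty}1=\infty,
\]
which is $\neg\left(\mathbf{T}\right)$.

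The difficulty here is conceptual rather than computational: the isolation hypothesis is used \emph{only} to promote ``$0\in\Spec\Delta^{+}$'' to ``$0$ is an eigenvalue of $\Delta^{+}$'', and the crux is recognizing that the maximal point of $\Spec A$ is the affine image of $\ker\Delta^{+}$, so that a nonvanishing projection of some Dirac form onto $\ker\Delta^{+}$ is exactly what produces a persistent atom at the normalization-cancelling value $\lambda_{0}$. Once the spectral-measure identity from Proposition \ref{prop:spectral_radius} and the nonnegativity afforded by $p\geq\frac{1}{2}$ are in hand, the divergence is immediate.
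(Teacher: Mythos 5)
Your argument for item \emph{(6)} is correct and is essentially the paper's own proof with the details made explicit: the paper likewise passes to the spectral-measure identity of Proposition \ref{prop:spectral_radius} (in the form of the resolvent integral $\int\frac{d\mu_{p}(\lambda)}{1-\lambda}$) and observes that an isolated zero of $\Delta^{+}$ forces an atom at the top of the spectrum of the normalized transition operator, whence divergence; your extraction of a $\sigma$ with $\mathbb{P}_{\ker\Delta^{+}}\mathbbm{1}_{\sigma}\neq0$ and the term-by-term lower bound via nonnegativity of the spectrum for $p\geq\frac{1}{2}$ are exactly the steps the paper leaves implicit. Note, however, that the statement to be proved comprises six items and your proposal addresses only item \emph{(6)}, moreover invoking item \emph{(\ref{enu:some-any-p})} without proof (harmlessly, since your argument in fact works for every $\frac{1}{2}\leq p<1$ directly); items \emph{(1)}--\emph{(5)} remain to be established.
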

\begin{proof}
\emph{(1)} is trivial and \emph{(2)} follows from Lemma \ref{lem:d-2-of-inf-deg}.

\emph{(3) }If $\left(\mathbf{A}'\right)$ fails then $0\notin\Spec\Delta^{+}$,
which means that $\Delta^{+}$ is invertible on $\Omega_{L^{2}}^{d-1}\left(X\right)$.
Thus, for every $\sigma\in X^{d-1}$ there exists $\psi\in\Omega_{L^{2}}^{d-1}$
s.t.\ $\Delta^{+}\psi=\mathbbm{1}_{\sigma}$, and taking $f=\delta_{d}\psi$
gives $\left(\mathbf{T}'\right)$. If $\left(\mathbf{S}\right)$ fails,
then some $\tau\in X^{d-2}$ has finite degree. In this case for any
$f\in\Omega^{d}$ one has 
\[
\left(\partial_{d-1}\partial_{d}f\right)\left(\tau\right)=\sum_{v\triangleleft\tau}\left(\partial_{d}f\right)\left(v\tau\right)=\sum_{v\triangleleft\tau}\sum_{w\triangleleft v\tau}f\left(wv\tau\right)=0,
\]
since this sums over every $d$-cell containing $\tau$ exactly twice,
with opposite orientations. If $\sigma$ is any $\left(d-1\right)$-cell
containing $\tau$, then $\partial_{d}f=\mathbbm{1}_{\sigma}$ would
give $0=\left(\partial_{d-1}\partial_{d}f\right)\left(\tau\right)=\left(\partial_{d-1}\mathbbm{1}_{\sigma}\right)\left(\tau\right)=1$,
so that $\left(\mathbf{T}'\right)$ fails.

\emph{(4)} If $\min\Spec\Delta^{+}>0$ then by Proposition \ref{prop:spectral_radius}\emph{(2)}
\[
\sup_{\sigma\in X_{\pm}^{d-1}}\limsup_{n\rightarrow\infty}\sqrt[n]{\widetilde{\mathcal{E}}_{n}^{\sigma}\left(\sigma\right)}=1-\frac{1-p}{p\left(d-1\right)+1}\cdot\min\Spec\Delta^{+}<1
\]
which gives $\sum_{n=0}^{\infty}\widetilde{\mathcal{E}}_{n}^{\sigma}\left(\sigma\right)<\infty$
for every $\sigma$.

\emph{(5)} Let $\frac{1}{2}\leq p$, and denote by $\left\{ \widetilde{\mathcal{E}}_{n}^{p,\sigma}\right\} _{n=0}^{\infty}$
the $p$-lazy normalized expectation process starting from $\sigma$
and $\widetilde{A}_{p}=\frac{p\left(d-1\right)+1}{d}\cdot A_{p}$.
Recall that $\widetilde{\mathcal{E}}_{n}^{p,\sigma}=\widetilde{A}_{p}^{n}\widetilde{\mathcal{E}}_{0}^{p,\sigma}=\widetilde{A}_{p}^{n}\one_{\sigma}$,
and let $\mu_{p}=\mu_{\sigma}^{\widetilde{A}_{p}}$ be the spectral
measure of $\widetilde{A}_{p}$ w.r.t.\ \textbf{$\sigma$}. Then

\[
\sum_{n=0}^{\infty}\widetilde{\mathcal{E}}_{n}^{p,\sigma}\left(\sigma\right)=\sum_{n=0}^{\infty}\widetilde{A}_{p}^{n}\one_{\sigma}\left(\sigma\right)=\deg\sigma\sum_{n=0}^{\infty}\left\langle \widetilde{A}_{p}^{n}\one_{\sigma},\one_{\sigma}\right\rangle =\deg\sigma\sum_{n=0}^{\infty}\int_{\Spec\widetilde{A}_{p}}\negthickspace\negthickspace\negthickspace\negthickspace\negthickspace\negthickspace\negthickspace\negthickspace\lambda^{n}d\mu_{p}\left(\lambda\right).
\]
Since $\Spec\widetilde{A}_{p}\subseteq\left[\frac{d\left(2p-1\right)}{p\left(d-1\right)+1},1\right]\subseteq\left[0,1\right]$,
by monotone convergence 
\begin{equation}
\sum_{n=0}^{\infty}\widetilde{\mathcal{E}}_{n}^{p,\sigma}\left(\sigma\right)=\deg\sigma\sum_{n=0}^{\infty}\int_{\Spec\widetilde{A}_{p}}\negthickspace\negthickspace\negthickspace\negthickspace\negthickspace\negthickspace\negthickspace\negthickspace\lambda^{n}d\mu_{p}\left(\lambda\right)=\deg\sigma\int_{\Spec\widetilde{A}_{p}}\frac{d\mu_{p}\left(\lambda\right)}{1-\lambda}\label{eq:spectral-equation}
\end{equation}
which is to be understood as $\infty$ if $\mu_{p}$ has an atom at
$\lambda=1$. Given $p<q<1$ one has $\widetilde{A}_{q}=\pi\widetilde{A}_{p}+\left(1-\pi\right)I$,
where $\pi=\pi\left(p,q,d\right)=\frac{1-q}{1-p}\cdot\frac{p\left(d-1\right)+1}{q\left(d-1\right)+1}\in\left(0,1\right)$.
The spectral measure of $\widetilde{A}_{q}$ w.r.t.\ $\sigma$ is
thus given by $\mu_{q}=\mu_{\sigma}^{\widetilde{A}_{q}}=\mu_{\sigma}^{\widetilde{A}_{p}}\circ g^{-1}$
where $g\left(\lambda\right)=\pi\lambda+1-\pi$, so that 
\[
\sum_{n=0}^{\infty}\widetilde{\mathcal{E}}_{n}^{q,\sigma}\left(\sigma\right)=\deg\sigma\int_{\Spec\left(\widetilde{A}_{q}\right)}\frac{d\mu_{q}\left(\lambda\right)}{1-\lambda}=\deg\sigma\int_{\Spec\widetilde{A}_{p}}\frac{d\mu_{p}\left(\lambda\right)}{1-\left(\pi\lambda+1-\pi\right)}=\frac{1}{\pi}\sum_{n=0}^{\infty}\widetilde{\mathcal{E}}_{n}^{p,\sigma}\left(\sigma\right)
\]
which completes the proof. Finally, \emph{(6)} follows from (\ref{eq:spectral-equation})
as an isolated point in the spectrum implies an atom at $1$.

\end{proof}

\section{\label{sec:Open-Questions}Open Questions}
\begin{enumerate}
\item In an infinite connected graph, the limit of $\sqrt[n]{\mathbf{p}_{n}^{v}\left(v\right)}$
(which describes the spectral radius of the transition operator, see
§\ref{sub:Spectral-radius-and}) is independent of the starting point
$v$. Is the same true in higher dimension? Namely, is $\limsup_{n\rightarrow\infty}\sqrt[n]{\mathcal{E}_{n}^{\sigma}\left(\sigma\right)}$
independent of $\sigma$ for a $\left(d-1\right)$-connected complex?
\item If $X\twoheadrightarrow Y$ is a covering map of graphs, then $\lambda\left(X\right)\geq\lambda\left(Y\right)$
(see e.g.\ \cite[Lemma 3.1]{MR0109367}, but beware - Kesten uses
$\lambda\left(X\right)$ for what we denote by $1-\lambda\left(X\right)$).
Does the same holds in higher dimension? If $\pi:X\twoheadrightarrow Y$
is a covering map of $d$-complexes, then the same argumentation as
in graphs shows that for any $\widetilde{\sigma}\in X^{d-1}$ and
$\sigma=\pi\left(\widetilde{\sigma}\right)\in Y^{d-1}$ one has $\mathbf{p}_{n}^{\widetilde{\sigma}}\left(\widetilde{\sigma}\right)\leq\mathbf{p}_{n}^{\sigma}\left(\sigma\right)$
and also $\mathbf{p}_{n}^{\widetilde{\sigma}}\left(\overline{\widetilde{\sigma}}\right)\leq\mathbf{p}_{n}^{\sigma}\left(\overline{\sigma}\right)$.
This, however, does not suffice to show that $\mathcal{E}_{n}^{\widetilde{\sigma}}\left(\widetilde{\sigma}\right)\leq\mathcal{E}_{n}^{\sigma}\left(\sigma\right)$.
Showing that this hold (or even that it holds after taking $n^{\mathrm{th}}$-roots
and letting $n\rightarrow\infty$) would give the desired result.
\item It is not hard to see that a $\left(d+1\right)$-partite $d$-complex
is disorientable, but for $d\geq2$ one can also construct examples
of disorientable complexes which are not $\left(d+1\right)$-partite.
It seems reasonable to conjecture that for simply connected complexes
these properties coincide. Is this indeed the case?
\item The suggestions for higher-dimensional analogues of amenability and
transience raise several questions:

\begin{enumerate}
\item Can high amenability and transience be characterized in non-spectral
terms (i.e.\ combinatorial expansion, or some $1-0$ event in the
$\left(d-1\right)$-walk model)? 
\item Are the transience properties $\left(\mathbf{T}\right)$ and\textbf{
$\left(\mathbf{T}'\right)$ }equivalent under some conditions?
\item Are all $d$-complexes with degrees bounded by $d+1$ $d$-amenable?
\end{enumerate}
\item In classical settings, the Brownian motion on a Riemannian manifold
constitute a continuous limit of the discrete random walk. Can one
define a continuous process, say, on the $\left(d-1\right)$-sphere
bundle of a Riemannian manifold, which relates to its $\left(d-1\right)$-homology
and to the spectrum of the Laplace-Beltrami operator on $\left(d-1\right)$-forms?
\item There are surprising and useful connections between random walks on
graphs and electrical networks (see e.g.\ \cite{doyle1984random,lyons2005probability}).
Can a parallel theory be devised for the random $\left(d-1\right)$-walk
on $d$-complexes?
\end{enumerate}

\bibliographystyle{amsalpha}
\bibliography{Random_walks_on_complexes}

\newcommand{\etalchar}[1]{$^{#1}$}
\providecommand{\bysame}{\leavevmode\hbox to3em{\hrulefill}\thinspace}
\providecommand{\MR}{\relax\ifhmode\unskip\space\fi MR }
\providecommand{\MRhref}[2]{%
  \href{http://www.ams.org/mathscinet-getitem?mr=#1}{#2}
}
\providecommand{\href}[2]{#2}
\begin{thebibliography}{FGL{\etalchar{+}}10}

\bibitem[Alo86]{Alo86}
N.~Alon, \emph{Eigenvalues and expanders}, Combinatorica \textbf{6} (1986),
  no.~2, 83--96.

\bibitem[AM85]{AM85}
N.~Alon and V.D. Milman, \emph{$\lambda_1$, isoperimetric inequalities for
  graphs, and superconcentrators}, Journal of Combinatorial Theory, Series B
  \textbf{38} (1985), no.~1, 73--88.

\bibitem[CS{\.Z}03]{cartwright2003ramanujan}
D.I. Cartwright, P.~Sol{\'e}, and A.~{\.Z}uk, \emph{Ramanujan geometries of
  type $\tilde{A}_n$}, Discrete mathematics \textbf{269} (2003), no.~1, 35--43.

\bibitem[DK10]{dotterrer2010coboundary}
D.~Dotterrer and M.~Kahle, \emph{Coboundary expanders}, arXiv preprint
  arXiv:1012.5316 (2010).

\bibitem[Dod84]{Dod84}
J.~Dodziuk, \emph{Difference equations, isoperimetric inequality and transience
  of certain random walks}, Trans. Amer. Math. Soc \textbf{284} (1984).

\bibitem[DS84]{doyle1984random}
P.G. Doyle and J.L. Snell, \emph{Random walks and electric networks}, no.~22,
  Mathematical Association of America, 1984.

\bibitem[Eck44]{Eck44}
B.~Eckmann, \emph{Harmonische funktionen und randwertaufgaben in einem
  komplex}, Commentarii Mathematici Helvetici \textbf{17} (1944), no.~1,
  240--255.

\bibitem[Fel66]{feller1966introduction}
W.~Feller, \emph{An introduction to probability theory, vol. ii}, 1966.

\bibitem[FGL{\etalchar{+}}10]{fox2011}
J.~Fox, M.~Gromov, V.~Lafforgue, A.~Naor, and J.~Pach, \emph{Overlap properties
  of geometric expanders}, arXiv preprint arXiv:1005.1392 (2010).

\bibitem[Gar73]{Gar73}
H.~Garland, \emph{p-adic curvature and the cohomology of discrete subgroups of
  p-adic groups}, The Annals of Mathematics \textbf{97} (1973), no.~3,
  375--423.

\bibitem[Gro10]{Gromov2010}
M.~Gromov, \emph{Singularities, expanders and topology of maps. part 2: From
  combinatorics to topology via algebraic isoperimetry}, Geometric and
  Functional Analysis \textbf{20} (2010), no.~2, 416--526.

\bibitem[GW12]{Gundert2012}
A.~Gundert and U.~Wagner, \emph{On laplacians of random complexes}, Proceedings
  of the 2012 symposuim on Computational Geometry, ACM, 2012, pp.~151--160.

\bibitem[G{\.Z}99]{grigorchuk1999asymptotic}
R.I. Grigorchuk and A.~{\.Z}uk, \emph{On the asymptotic spectrum of random
  walks on infinite families of graphs}, Random walks and discrete potential
  theory (Cortona, 1997), Sympos. Math \textbf{39} (1999), 188--204.

\bibitem[Kes59]{MR0109367}
H.~Kesten, \emph{Symmetric random walks on groups}, Trans. Amer. Math. Soc.
  \textbf{92} (1959), 336--354. \MR{0109367 (22 \#253)}

\bibitem[Li04]{li2004ramanujan}
W.C.W. Li, \emph{Ramanujan hypergraphs}, Geometric and Functional Analysis
  \textbf{14} (2004), no.~2, 380--399.

\bibitem[LM06]{Linial2006}
N.~Linial and R.~Meshulam, \emph{Homological connectivity of random
  2-complexes}, Combinatorica \textbf{26} (2006), no.~4, 475--487.

\bibitem[LP05]{lyons2005probability}
R.~Lyons and Y.~Peres, \emph{Probability on trees and networks}, 2005.

\bibitem[LSV05]{Lubotzky2005a}
A.~Lubotzky, B.~Samuels, and U.~Vishne, \emph{{ }{R}amanujan complexes of type
  $\tilde{A}_{d}$}, Israel Journal of Mathematics \textbf{149} (2005), no.~1,
  267--299.

\bibitem[Lub13]{Lubotzky2013}
A.~Lubotzky, \emph{Ramanujan complexes and high dimensional expanders}, arXiv
  preprint arXiv:1301.1028 (2013).

\bibitem[Lyo83]{lyons1983simple}
T.~Lyons, \emph{A simple criterion for transience of a reversible {M}arkov
  chain}, The Annals of Probability (1983), 393--402.

\bibitem[MW09]{meshulam2009homological}
R.~Meshulam and N.~Wallach, \emph{Homological connectivity of random
  k-dimensional complexes}, Random Structures \& Algorithms \textbf{34} (2009),
  no.~3, 408--417.

\bibitem[MW11]{Matouvsek2011}
J.~Matou\v{s}ek and U.~Wagner, \emph{On {G}romov's method of selecting heavily
  covered points}, Arxiv preprint arXiv:1102.3515 (2011).

\bibitem[Nil91]{nilli1991second}
A.~Nilli, \emph{On the second eigenvalue of a graph}, Discrete Mathematics
  \textbf{91} (1991), no.~2, 207--210.

\bibitem[PRT12]{parzanchevski2012isoperimetric}
O.~Parzanchevski, R.~Rosenthal, and R.J. Tessler, \emph{Isoperimetric
  inequalities in simplicial complexes}, arXiv preprint arXiv:1207.0638 (2012).

\bibitem[SKM12]{mukherjee2012cheeger}
J.~Steenbergen, C.~Klivans, and S.~Mukherjee, \emph{A {C}heeger-type inequality
  on simplicial complexes}, arXiv preprint arXiv:1209.5091 (2012).

\bibitem[Tan84]{Tan84}
R.M. Tanner, \emph{Explicit concentrators from generalized $ n $-gons}, SIAM
  Journal on Algebraic and Discrete Methods \textbf{5} (1984), 287.

\end{thebibliography}

\end{document}